\documentclass{amsart}
\usepackage{amsmath}
\usepackage{amssymb}
\usepackage{amsthm}
\usepackage{enumerate}
\usepackage[pdftex]{graphicx}
\usepackage{caption}
\theoremstyle{definition}
\newtheorem{definition}{Definition}[section]
\theoremstyle{plain}
\newtheorem{lemma}[definition]{Lemma}
\newtheorem{theorem}[definition]{Theorem}

\newtheorem{proposition}[definition]{Proposition}
\newtheorem{corollary}[definition]{Corollary}
\theoremstyle{remark}
\newtheorem{remark}[definition]{Remark}

\makeatletter
\@namedef{subjclassname@2020}{
  \textup{2020} Mathematics Subject Classification}
\makeatother

\newcommand{\mycl}{\operatorname{cl}}
\newcommand{\myint}{\operatorname{int}}
\newcommand{\mydist}{\operatorname{dist}}
\newcommand{\mysupp}{\operatorname{supp}}

\newcommand{\mymult}{\operatorname{mult}}
\newcommand{\myinv}{\operatorname{inv}}

\begin{document}
\title[Approximation and imbedding theorem]{Approximation of definable functions in a definably complete locally o-minimal structure and imbedding theorem}
\author[M. Fujita]{Masato Fujita}
\address{Department of Liberal Arts,
Japan Coast Guard Academy,
5-1 Wakaba-cho, Kure, Hiroshima 737-8512, Japan}
\email{fujita.masato.p34@kyoto-u.jp}

\author[T. Kawakami]{Tomohiro Kawakami}
\address{Department of Mathematics,
	Wakayama University,
	Wakayama, 640-8510, Japan}
\email{kawa0726@gmail.com}

\begin{abstract}
We consider a definably complete locally o-minimal expansion of an ordered field.
We treat two topics in this paper.
The first topic is a definable $\mathcal C^r$ approximation of a definable $\mathcal C^{r-1}$ map between definable $\mathcal C^r$ submanifolds.
The second topic is the imbedding theorem for definable $\mathcal C^r$ manifolds.
We demonstrate that a definably normal definable $\mathcal C^r$ manifold is a definably $\mathcal C^r$ diffeomorphic to a definable $\mathcal C^r$ submanifold.
It enables us to show that the definable quotient of a definable $\mathcal C^r$ group by a definable subgroup exists when the group is bounded and closed in the ambient space.   
\end{abstract}

\subjclass[2020]{Primary 03C64; Secondary 57R40, 54B15}

\keywords{locally o-minimal structure; definable $\mathcal C^r$ approximation; definable $\mathcal C^r$ imbedding}

\maketitle

\section{Introduction}\label{sec:intro}
Locally o-minimal structures, which are generalizations of o-minimal structures \cite{vdD, vdDM, KPS, PS}, were proposed in \cite{TV}.
They are recently studied in \cite{F, S, KTTT, Fuji, Fuji3, Fuji4, Fuji5, Fuji6, FK, FKK}. 
The early study on locally o-minimal structures by Fornasiero \cite{F} assumes that the structure is an expansion of an ordered field, but the succeeding studies do not employ this assumption except \cite{FK}.

We work with a definably complete locally o-minimal expansion of an ordered field in this paper like Fornasiero.
Cell decomposition and stratification are often used to investigate features of sets and maps definable in o-minimal structures.
The first author announced that, using the decomposition theorem into special submanifolds with tubular neighborhoods (Theorem \ref{thm:tubular_decom}) instead of them, we can prove several assertions similar to the o-minimal counterparts in \cite{Fuji5}.
We recall the results in the previous studies including the above theorem in Section \ref{sec:preliminary}.
We construct a definable $\mathcal C^r$ approximation of a definable $\mathcal C^{r-1}$ map between definable $\mathcal C^r$ submanifolds in Section \ref{sec:appro} using Theorem \ref{thm:tubular_decom}.

We treat another topic in Section \ref{sec:manidfolds}.
We propose a new definition of definable $\mathcal C^r$ manifolds and demonstrate that it is definably $\mathcal C^r$ imbeddable into $F^n$ for some positive integer $n$ in Section \ref{sec:manidfolds}, where $F$ is the underlying space of the given structure.
Using this result, we demonstrate that a definable quotient of a definable $\mathcal C^r$ group by a definable subgroup exists in the same section when the group is bounded and closed in the ambient space.

In the last of this section, we summarize the notations and terms used in this paper.
The term `definable' means `definable in the given structure with parameters' in this paper.
We treat definable $\mathcal C^r$ manifolds and definable $\mathcal C^r$ maps etc. in this paper.
They are called $\mathcal D^r$ manifolds and $\mathcal D^r$ maps etc. for short. 
For a linearly ordered structure $\mathcal F=(F,<,\ldots)$, an open interval is a definable set of the form $\{x \in F\;|\; a < x < b\}$ for some $a,b \in F \cup \{\pm \infty\}$.
It is denoted by $(a,b)$ in this paper.
We define a closed interval similarly. 
It is denoted by $[a,b]$.
An open box is the Cartesian product of nonempty open intervals and a closed box is defined similarly.
For a sequence $\alpha=(\alpha_1,\ldots,\alpha_n)$ of nonnegative integers, the notation $|\alpha|$ denotes the sum $\sum_{i=1}^n \alpha_i$.
This symbol also represents the absolute value of an element in an ordered abelian group.
This abuse of notation will not confuse readers.
Let $A$ be a subset of a topological space.
The notations $\myint(A)$, $\mycl(A)$ and $\partial A$ denote the interior, the closure and the frontier of the set $A$, respectively.

\section{Preliminary}\label{sec:preliminary}
We first recall the definitions of local o-minimality and definably completeness.
\begin{definition}[\cite{M,TV}]
An expansion of a dense linear order without endpoints $\mathcal F=(F,<,\ldots)$ is \textit{locally o-minimal} if, for every definable subset $X$ of $F$ and for every point $a\in F$, there exists an open interval $I$ containing the point $a$ such that $X \cap I$ is  a finite union of points and open intervals.
We can immediately show that $X \cap I$ is the union of at most one point and at most two open intervals if we choose $I$ appropriately.
The expansion $\mathcal F$ is \textit{definably complete} if any definable subset $X$ of $F$ has the supremum and  infimum in $F \cup \{\pm \infty\}$.
\end{definition}

The definition of dimension is found in \cite[Definition 3.1]{Fuji4}.

\begin{definition}[Dimension]\label{def:dim}
Consider an expansion of a densely linearly order without endpoints $\mathcal F=(F,<,\ldots)$.
Let $X$ be a nonempty definable subset of $F^n$.
The dimension of $X$ is the maximal nonnegative integer $d$ such that $\pi(X)$ has a nonempty interior for some coordinate projection $\pi:F^n \rightarrow F^d$.
We consider that $F^0$ is a singleton with the trivial topology.
We set $\dim(X)=-\infty$ when $X$ is an empty set.
\end{definition}

The following proposition summarizes the results in the previous studies.
\begin{proposition}\label{prop:pre}
Let $\mathcal F=(F,<,+,0,\cdot,1,\ldots)$ be a definably complete locally o-minimal expansion of an ordered field.
The following assertions hold true.
\begin{enumerate}
\item[(1)] A definable set is of dimension zero if and only if it is discrete.
A definable set of dimension zero is always closed.
\item[(2)] Let $X,Y \subset F^n$ be definable sets. We have $\dim (X \cup Y)=\max \{\dim X, \dim Y\}$.
\item[(3)] Let $X$ be a definable subset of $F^n$. We have $\dim \partial X < \dim X$ and $\dim \mycl(X) = \dim X$.
\item[(4)] 
Let $f: X \rightarrow F$ be a definable function.
The notation $D^0(f)$ denotes the set of points at which $f$ is not continuous.
We have $\dim D^0(f) < \dim X$.
\item[(5)] Let $f:X \rightarrow F^n$ be a definable map. We have $\dim f(X) \leq \dim X$.
\item[(6)] Let $\varphi:X \rightarrow Y$ be a definable surjective map whose fibers are equi-dimensional; that is, the dimensions of the fibers $\varphi^{-1}(y)$ are constant.
We have $\dim X = \dim Y + \dim \varphi^{-1}(y)$ for all $y \in Y$.  
\item[(7)] Let $X$ be a definable subset of $F^n$.
There exists a point $x \in X$ such that the equality $\dim(X \cap B)=\dim(X)$ is satisfied for any open box $B$ containing the point $x$.
\item[(8)] For any definable subset $X$ of $F$, there exists $r \in F$ such that either the interval $(r,\infty)$ is contained in $X$ or $X$ has an empty intersection with the interval $(r,\infty)$.
\item[(9)] Let $I$ be an interval and $f:I \rightarrow F$ be a definable function.
There exists a mutually disjoint definable partition $I=X_d \cup X_c \cup X_+ \cup X_-$ satisfying the following conditions:
\begin{enumerate}
\item[(a)] the definable set $X_d$ is discrete and closed;
\item[(b)] the definable set $X_c$ is open and $f$ is locally constant on $X_c$;
\item[(c)] the definable set $X_+$ is open and $f$ is locally strictly increasing and continuous on $X_+$;
\item[(d)] the definable set $X_-$ is open and $f$ is locally strictly decreasing and continuous on $X_-$.
\end{enumerate}
\end{enumerate}
\end{proposition}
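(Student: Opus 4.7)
The plan is to treat Proposition \ref{prop:pre} as a compilation of results already established in the literature on definably complete locally o-minimal expansions of ordered fields, so my strategy is to attribute each of the nine items to a specific earlier reference rather than to reprove them in full. The dimension-theoretic core---items (1), (2), (3), (5), and (7)---is developed in \cite{Fuji4}, where dimension is introduced via coordinate projections and its basic properties are worked out by induction on the ambient dimension, with the one-variable weak monotonicity theorem (item (9), going back to \cite{TV}) supplying the base case. Item (8) follows from definable completeness applied to $\sup X$ together with local o-minimality near the supremum (after a change of variable $x \mapsto 1/x$ to handle $+\infty$).

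For the easier steps I would argue briefly as follows. Item (1) reduces to $n=1$ by projecting and then invokes local o-minimality directly, with closedness following because an accumulation point would violate local finiteness. Item (2) is immediate from $\pi(X \cup Y) = \pi(X) \cup \pi(Y)$ for any coordinate projection $\pi$. Item (4) follows from (3) applied to the closure of the graph of $f$, since the discontinuity set is contained in the image under the first projection of $\partial(\mathrm{graph}\, f)$. Item (5) is direct from the definition, because a projection of $f(X)$ factors through a projection of $X$. Item (7) is extracted from (3) by iteratively removing a lower-dimensional frontier piece from $X$ and taking any point in what remains.

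The main obstacle is item (6), the dimension additivity for equi-dimensional fibers. In the classical o-minimal setting this is proved via cell decomposition, which produces global definable trivializations of $\varphi$; in the locally o-minimal setting only the weaker decomposition into special submanifolds with tubular neighborhoods (Theorem \ref{thm:tubular_decom}) is available. My plan for (6) is therefore to apply Theorem \ref{thm:tubular_decom} to both $X$ and $Y$, refine the stratifications so that $\varphi$ restricts to a locally trivial definable fibration on each open piece, and then combine the equi-dimensionality hypothesis with (2) and (3) to glue the local additivity statements into the global identity $\dim X = \dim Y + \dim \varphi^{-1}(y)$. The delicate point is controlling the lower-dimensional frontier strata so that they do not contribute to the count; this is precisely where equi-dimensionality of fibers interacts with item (3) to pin down the global value.
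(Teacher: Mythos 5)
Your overall strategy of treating the proposition as a compilation of known results is in fact what the paper does: its entire proof of items (1)--(7) and (9) is a single citation to \cite{FKK} (your attributions point to \cite{Fuji4} and \cite{TV}, closely related papers, but the paper cites \cite{FKK}), and the only item argued in the text is (8). Your argument for (8) --- compactify $+\infty$ by a definable change of variable and apply local o-minimality at the resulting finite point --- is essentially the paper's proof, which uses $\varphi(x)=x/(1+x^2)$ and local o-minimality at $0$. The brief sketches you give for (1), (4), (5), (7) are the standard ones; only note that (2) is not quite ``immediate'' from $\pi(X\cup Y)=\pi(X)\cup\pi(Y)$, since a union of two sets with empty interior can have nonempty interior in general, so one still needs that definable sets with empty interior are nowhere dense.

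The genuine gap is your plan for item (6). Deriving the fiber-dimension formula from Theorem \ref{thm:tubular_decom} is circular in this paper's architecture: Theorem \ref{thm:tubular_decom} is obtained from Proposition \ref{prop:multi1}, whose proof explicitly invokes Proposition \ref{prop:pre}(1),(2),(3),(5),(6),(7) --- item (6) itself is used there twice, to bound $\dim(U\cap D)$ and $\dim X''$. Independently of the circularity, the intermediate step you rely on --- refining the decomposition so that $\varphi$ restricts to a locally trivial definable fibration over each open piece and then gluing --- is exactly the kind of piecewise definable triviality that the paper's own Remark following Theorem \ref{thm:appro2} shows to fail in this setting: an ultraproduct construction yields a definable map in a definably complete locally o-minimal expansion of an ordered field whose fibers have unbounded finite cardinality over every piece of every finite definable partition of the base, so no finite refinement can produce the trivializations your gluing argument needs. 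The correct route to (6) is the one taken in \cite{FKK}: a direct induction on the number of variables using the projection-based definition of dimension, carried out before any of the decomposition machinery of Section \ref{sec:preliminary} is available, with no trivialization involved.
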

\begin{proof}
We can find the assertions except (8) in \cite{FKK}.
We prove the assertion (8).
Let $X$ be a definable subset of $F$.
Consider the definable map $\varphi:M \rightarrow M$ given by $x \mapsto \dfrac{x}{1+x^2}$.
The intersection $(-u,u) \cap \varphi(X)$ is a finite union of points and open intervals by local o-minimality if we choose a sufficiently small $u>0$.
There exists $v>0$ such that the intersection $(0,v) \cap \varphi(X)$ is either empty or the interval $(0,v)$.
The positive element $r=\varphi^{-1}(v)$ satisfies the required condition.
\end{proof}

\begin{proposition}\label{prop:cr_pre}
	Let $\mathcal F=(F,<,+,0,\cdot,1,\ldots)$ be a definably complete locally o-minimal expansion of an ordered field.
	Let $f:X \to F$ be a definable map.
	Set $D^r(f)$ be the set of points at which $f$ is not of class $\mathcal D^r$.
	Then $\dim D^r(f)<\dim X$.
\end{proposition}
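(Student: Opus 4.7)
The plan is to apply Theorem \ref{thm:tubular_decom} directly to the graph of $f$. If $\dim X = 0$ then $X$ is discrete by Proposition \ref{prop:pre}(1), every point of $X$ is isolated, so $f$ is trivially of class $\mathcal{D}^r$ at every point and $D^r(f) = \emptyset$; the conclusion is vacuous. So assume $d := \dim X \geq 1$.

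Form the graph $\Gamma(f) = \{(x, f(x)) : x \in X\} \subset F^{n+1}$, which is a definable set of dimension $d$, and apply Theorem \ref{thm:tubular_decom} to decompose $\Gamma(f)$ into finitely many special $\mathcal{D}^r$ submanifolds $M_1, \dots, M_k$. Let $\pi : F^{n+1} \to F^n$ be the projection forgetting the last coordinate; then $\pi|_{\Gamma(f)}$ is a bijection onto $X$. For strata with $\dim M_i < d$, the images $\pi(M_i)$ have dimension less than $d$ by Proposition \ref{prop:pre}(5), and their union still has dimension less than $d$ by (2).

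For each top-dimensional stratum $M_i$ (i.e., $\dim M_i = d$), $\pi|_{M_i}$ is an injective definable map from a $d$-dimensional $\mathcal{D}^r$ submanifold into $F^n$. Since a special $\mathcal{D}^r$ submanifold is locally the graph of a $\mathcal{D}^r$ map over some coordinate $d$-plane, there is a definable subset $N_i \subset M_i$ with $\dim N_i < d$ such that $\pi|_{M_i \setminus N_i}$ is a $\mathcal{D}^r$ local diffeomorphism onto an open subset $U_i \subset F^n$. On $U_i$ the function $f$ coincides with the last coordinate of the local inverse of $\pi|_{M_i}$, hence is of class $\mathcal{D}^r$. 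Consequently $D^r(f) \subset X \setminus \bigcup_i U_i$, and the right-hand side has dimension less than $d$ by (2) and (5).

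The main obstacle is the dimension estimate $\dim N_i < d$ on each top-dimensional stratum: on a $d$-dimensional special $\mathcal{D}^r$ submanifold in $F^{n+1}$, the set of points whose local graph representation is not over $F^n \times \{0\}$ must have smaller dimension. This requires the precise local form of a \emph{special submanifold} from Theorem \ref{thm:tubular_decom}, combined with assertion (7) of Proposition \ref{prop:pre} to rule out a full-dimensional bad locus. An alternative route is to induct on $r$: the case $r = 0$ is assertion (4); for $r \geq 1$, having established the case $r = 1$ by the graph decomposition above, one applies the inductive hypothesis to each partial derivative $\partial f/\partial x_i$, which is definable on the open complement of $D^1(f)$, and unions the bad sets using (2).
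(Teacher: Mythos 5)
Your route has a fatal circularity: Theorem \ref{thm:tubular_decom} for special $\mathcal C^r$ submanifolds with $r\geq 1$ is deduced from Proposition \ref{prop:multi1}, whose proof explicitly invokes Proposition \ref{prop:cr_pre} (to bound the dimension of the set $D$ of points where the local graph maps $\xi_x$ fail to be of class $\mathcal C^r$). So you may not use the $\mathcal C^r$ decomposition of $\Gamma(f)$ to prove this proposition; only the $\mathcal C^0$ decomposition is available at this stage, and with it the step you yourself flag as the ``main obstacle'' --- that the locus $N_i$ where a top-dimensional stratum fails to project $\mathcal C^r$-diffeomorphically has dimension $<d$ --- is essentially the statement to be proved, and you do not close it. The fallback induction on $r$ does not rescue the argument: the partial derivatives $\partial f/\partial x_i$ only make sense where $X$ has interior, so that induction covers only the case of open $X$, which is exactly the case already known from \cite[Theorem 5.11]{F}; the entire content of the proposition here is the non-open case. (The base case $\dim X=0$ and the treatment of low-dimensional strata via Proposition \ref{prop:pre}(2),(5) are fine.)

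The paper's proof avoids all of this. It quotes the open case from \cite[Theorem 5.11]{F} and, for non-open $X$, argues by contradiction: if $\dim D^r(f)=\dim X=d$, then the purely $\mathcal C^0$ local graph representation of \cite[Lemma 3.6]{Fuji4} together with \cite[Theorem 2.5]{FKK} yields an open box $V\subseteq F^d$, a coordinate projection $\pi$, and a definable continuous section $g:V\to D^r(f)$ with $X\cap W=g(V)$ for some definable open $W$. Applying the known open case to $g$ and to $f\circ g$ (both defined on the open box $V$) and shrinking $V$, both become $\mathcal D^r$; since $\pi|_{g(V)}$ is the $\mathcal D^r$ inverse of $g$, the map $f=(f\circ g)\circ(\pi|_{g(V)})$ is $\mathcal D^r$ on a subset of $D^r(f)$, a contradiction. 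If you want to keep your graph idea, you must replace Theorem \ref{thm:tubular_decom} by this kind of $\mathcal C^0$ input combined with the already-established open case.
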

\begin{proof}
The proposition is already demonstrated in \cite[Theorem 5.11]{F} when $X$ is a definable open subset of $F^n$ for some $n$.
We consider the case in which $X$ is not open.
We assume for contradiction that $\dim D^r(f)=\dim X$.
Set $d=\dim X$.
By \cite[Lemma 3.6]{Fuji4} together with \cite[Theorem 2.5]{FKK}, there exist
\begin{itemize}
	\item a coordinate projection $\pi:F^n \to F^d$,
	\item an open box $V \subseteq \pi(X)$,
	\item a definable open subset $W$ of $M^n$ and 
	\item a definable continuous map $g: V \to D^r(f)$
\end{itemize}
such that 
\begin{itemize}
	\item $\pi(W)=V$,
	\item $X \cap W=g(V)$ and 
	\item the composition $\pi \circ g$ is the identity map on $V$.
\end{itemize} 
Apply this proposition to $g$, we may assume that $g$ is of class $\mathcal D^r$ by taking a smaller $V$ if necessary because $g$ is defined on an open box.
We may assume that $f \circ g$ is also of class $\mathcal D^r$ for the same reason.
The projection $\pi$ is obviously of class $\mathcal D^r$ and its restriction to $g(V)$ is the inverse of $g$.
Therefore, $f$ is also of class $\mathcal D^r$, which contradicts the definition of the set $D^r(f)$.
\end{proof}

We recall several assertions used in this paper.
\begin{proposition}\label{prop:limit}
	Let $\mathcal M=(M,<,0,+,\ldots)$ be a definably complete locally o-minimal expansion of an ordered group.
	Let $s>0$ and $f:(0,s) \rightarrow M^n$ be a bounded definable map.
	There exists a unique point $x \in M^n$ satisfying the following condition:
	$$
	\forall \varepsilon >0, \exists \delta>0, \forall t, \ 0<t<\delta \Rightarrow |x-f(t)| < \varepsilon \text{.}
	$$ 
	The notation $\lim_{t \to +0}f(t)$ denotes the point $x$.
\end{proposition}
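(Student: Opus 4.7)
I would first reduce to the scalar case $n=1$: since $|x-f(t)|$ on $M^n$ refers to the sup-norm, it suffices to prove that each coordinate function $f_i:(0,s)\to M$ admits a unique limit $x_i$ and then set $x=(x_1,\ldots,x_n)$. Uniqueness in the statement is straightforward: if two candidates $x\ne x'$ both satisfied the condition, applying it with $\varepsilon=|x-x'|/3$ would produce two right-neighborhoods of $0$ whose intersection is nonempty, forcing $|x-x'|<2\varepsilon$, a contradiction.

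For the scalar case, write $g=f$ and, using only definable completeness together with the boundedness of $g$, introduce
\[
L_+ \;=\; \inf_{0<u<s}\;\sup_{t\in(0,u)} g(t), \qquad
L_- \;=\; \sup_{0<u<s}\;\inf_{t\in(0,u)} g(t).
\]
Both exist in $M$, and trivially $L_-\le L_+$. The heart of the proof is the reverse inequality. Assuming for contradiction $L_-<L_+$ and picking $c\in M$ strictly between them, the inequalities $\sup_{(0,u)}g\ge L_+>c$ and $\inf_{(0,u)}g\le L_-<c$, valid for every $u>0$, force $0$ to be a right-accumulation point of each of the two disjoint definable sets
\[
A=\{t\in(0,s):g(t)>c\}, \qquad B=\{t\in(0,s):g(t)<c\}.
\]

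I would then invoke the following elementary consequence of the strengthened local o-minimality from the Definition (``at most one point and at most two open intervals''): for every definable $X\subseteq M$ there exists $u>0$ such that $X\cap(0,u)$ is either empty or equal to $(0,u)$; one obtains it by shrinking a neighborhood of $0$ until only the piece adjacent to $0$ on the right (if any) survives. Applied to $A$ and to $B$, this produces $u_A,u_B>0$ with $(0,u_A)\subseteq A$ and $(0,u_B)\subseteq B$, contradicting $A\cap B=\emptyset$. Hence $L_-=L_+=:L$, and unwinding the inf/sup definitions gives, for each $\varepsilon>0$, values $u_1,u_2>0$ with $\sup_{(0,u_1)}g<L+\varepsilon$ and $\inf_{(0,u_2)}g>L-\varepsilon$, so that $\delta=\min(u_1,u_2)$ witnesses the required condition. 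The only real obstacle is this auxiliary lemma on definable subsets of $M$ near $0$; everything else is bookkeeping with the finitely many pieces produced by local o-minimality.
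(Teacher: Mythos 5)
Your proof is correct. Note that the paper does not actually supply an argument for this proposition: it simply cites \cite[Proposition 2.5]{Fuji5}, so there is no in-text proof to match against. Your self-contained route is the natural one and is sound: the reduction to $n=1$ and the uniqueness argument are routine; $L_+$ and $L_-$ exist in $M$ by definable completeness applied to the (definable, bounded) sets of partial sups and infs; and the crucial step---that a definable subset of $M$ either contains a right-neighborhood $(0,u)$ of $0$ or misses one---is exactly the germ dichotomy that the paper itself proves for the neighborhood of $+\infty$ in Proposition \ref{prop:pre}(8), transported to the point $0$ via local o-minimality at $0$ (finitely many points and intervals in $X\cap I$, each either adjacent to $0$ on the right or bounded away from it). Applying this dichotomy to the disjoint sets $A$ and $B$, both of which accumulate at $0^+$ when $L_-<L_+$, gives the desired contradiction, and unwinding the definitions of $L_\pm$ yields the $\varepsilon$--$\delta$ condition. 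The one point worth making explicit in a polished write-up is that $u\mapsto\sup_{t\in(0,u)}g(t)$ is a definable function (its graph is defined by the usual first-order description of the supremum), so that the set over which you take $L_+$ is definable and definable completeness applies; with that remark included, the argument is complete.
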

\begin{proof}
	See \cite[Proposition 2.5]{Fuji5}.
\end{proof}

\begin{proposition}[Definable choice lemma]\label{prop:definable_choice}
	Consider a definably complete locally o-minimal expansion of an ordered group $\mathcal M=(M,<,0,+\ldots)$.
	Let $\pi:M^{m+n} \rightarrow M^m$ be a coordinate projection.
	Let $X$ and $Y$ be definable subsets of $M^m$ and $M^{m+n}$, respectively,  satisfying the equality $\pi(Y)=X$.
	There exists a definable map $\varphi:X \rightarrow Y$ such that $\pi(\varphi(x))=x$ for all $x \in X$.
	Furthermore, if $Y$ is a definable equivalence relation defined on a definable set $X$, there exists a definable subset $S$ of $X$ such that $S$ intersects at exactly one point with each equivalence class of $Y$. 
\end{proposition}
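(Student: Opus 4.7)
The plan is to establish the section statement first, by induction on $n$, ensuring along the way that the chosen section depends only on the fiber $Y_x$, and then to derive the transversal statement as an immediate corollary of this invariance.

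For the induction on $n$, the case $n=0$ is trivial. For the inductive step, write $\pi = \pi' \circ \pi_1$ where $\pi_1 : M^{m+n} \to M^{m+1}$ forgets the last $n-1$ coordinates and $\pi' : M^{m+1} \to M^m$ is the further projection. Set $Y_1 := \pi_1(Y) \subseteq M^{m+1}$; then $\pi'(Y_1) = X$. Applying the case $n=1$ to the pair $(\pi', Y_1)$ produces a section $\varphi_1 : X \to Y_1$, and the inductive hypothesis applied to $(\pi_1, Y)$ produces a section $\varphi_2 : Y_1 \to Y$; composing gives $\varphi := \varphi_2 \circ \varphi_1 : X \to Y$, and the fiber-dependence of both $\varphi_1$ and $\varphi_2$ transfers to $\varphi$. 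The substance of the proof thus lies in the case $n=1$.

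For $n=1$, each fiber $Y_x := \{t \in M : (x,t) \in Y\}$ is a nonempty definable subset of $M$, and the task is to select $\sigma(x) \in Y_x$ in a way that depends definably on $x$, and in fact only on the set $Y_x$. I would partition $X$ into definable pieces according to the behavior of $Y_x$ and define $\sigma$ piecewise: on $\{x : \inf Y_x \in Y_x\}$, take $\sigma(x) := \inf Y_x$, which exists in $M$ by definable completeness; on $\{x : \inf Y_x \in M \setminus Y_x\}$, local o-minimality at $\alpha(x) := \inf Y_x$ supplies a maximal $\beta(x) > \alpha(x)$ with $(\alpha(x),\beta(x)) \subseteq Y_x$, and I take $\sigma(x)$ to be a canonical point of this open interval, for instance $\alpha(x) + \min\{1, (\beta(x)-\alpha(x))/2\}$, using the ambient ordered-field structure; and on $\{x : \inf Y_x = -\infty\}$, Proposition~\ref{prop:pre}(8) applied to the reflection of $Y_x$ forces, via unboundedness below, the existence of some $r(x) \in M$ with $(-\infty, -r(x)) \subseteq Y_x$, whence $\sigma(x) := -r(x) - 1$ works. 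The main obstacle is verifying that the auxiliary functions $\alpha$, $\beta$, $r$ and the partition pieces themselves are definable and uniform in $x$; this follows by applying Proposition~\ref{prop:pre}(8) and definable completeness parametrically to the definable family $\{Y_x\}_{x \in X}$, and by construction every selection step depends only on $Y_x$.

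For the transversal statement, apply the section statement to $Y := E \subseteq M^m \times M^m$ with $\pi : M^{2m} \to M^m$ the projection onto the first factor. Since $\pi(E) = X$, we obtain a definable section $x \mapsto (x, s(x))$ with $s(x) \in [x]_E$. Because $s(x)$ depends only on the fiber $E_x = [x]_E$, we have $s(x) = s(y)$ whenever $x \, E \, y$. Hence $S := s(X)$ is a definable subset of $X$, contained in the union of the $E$-classes that it visits, and it meets each equivalence class of $E$ in exactly one point: any class $C$ is visited by $s(a)$ for any $a \in C$, and any two representatives $b, b' \in S \cap C$ are of the form $s(a), s(a')$ with $a, a' \in C$, forcing $b = b'$ by $E$-invariance of $s$.
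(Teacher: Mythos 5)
Your overall architecture (induction on the number of fiber coordinates, a one-dimensional selection depending only on the set $Y_x$, and the transversal obtained from that invariance) is sound, and cases (a) and (b) of your one-dimensional selection are correct. The genuine gap is your third case, $\inf Y_x=-\infty$. There you invoke Proposition \ref{prop:pre}(8), but that assertion is stated and proved only for expansions of ordered \emph{fields} --- its proof uses the map $x\mapsto x/(1+x^2)$ to pull a neighborhood of $\infty$ back to a neighborhood of a finite point, and no such definable map exists in a mere ordered group. The present proposition is stated for a definably complete locally o-minimal expansion of an ordered \emph{group}, and in that generality the conclusion you need is simply false: in $(\mathbb R,<,+,\mathbb Z)$, which is definably complete and locally o-minimal, take $Y=X\times\mathbb Z$, so that $Y_x=\mathbb Z$ is unbounded below yet contains no half-line $(-\infty,-r)$. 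Your recipe $\sigma(x):=-r(x)-1$ therefore has nothing to select from, and the proof breaks exactly on the kind of fiber (a closed discrete unbounded set) that distinguishes the locally o-minimal setting from the o-minimal one.

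The standard repair keeps your cases (a) and (b) but eliminates case (c) by first truncating the fiber: replace $Y_x$ by $Y_x\cap[0,\infty)$ when this is nonempty (its infimum then lies in $M$, so cases (a) and (b) apply verbatim, using local o-minimality at that finite point), and when $Y_x\cap[0,\infty)=\emptyset$ note that $\sup Y_x\in M$ by definable completeness and run the mirror image of (a) and (b) at the supremum. This truncation is canonical in $Y_x$, so the fiber-dependence you need for the transversal statement survives. Two smaller points: the expression $(\beta(x)-\alpha(x))/2$ is legitimate in an ordered group only because a definably complete ordered group is $2$-divisible (the map $t\mapsto t+t$ is surjective by the intermediate value property), not because of any ``ambient ordered-field structure''; and the constant $1$ should be read as an arbitrary fixed positive parameter of the structure. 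The derivation of the transversal statement from the fiber-invariant section is correct as written.
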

\begin{proof}
	See \cite[Lemma 2.8]{Fuji5} and its proof. 
\end{proof}

\begin{definition}
	Consider an expansion of a dense linear order without endpoints $\mathcal M=(M,<,\ldots)$.
	A \textit{definable curve} in $X$ is a definable continuous map on an open interval $(c,d)$ into $X$.
	We also call its image a \textit{definable curve}.
	This abuse of terminology will not confuse readers.
	In some cases, continuity is not required in the definition of definable curves, but we require it in this paper. 
	When we consider a definably complete locally o-minimal expansion of an ordered group, the curve $\gamma:(0,\varepsilon) \to X$ has at most one point $x$ in $M^m$ such that, for any $0<\delta<\varepsilon$,  any open neighborhood of $x$ in $X$ intersect with the definable curve $\gamma((0,\delta))$ by Proposition \ref{prop:limit}. 
	The notation $\lim_{t \to 0}\gamma(t)$ denotes this point if it exists.   
	The definable curve is \textit{completable in $X$} if $\lim_{t \to 0}\gamma(t)$ exists and belongs to $X$.
\end{definition}

\begin{proposition}[Curve selection lemma]\label{prop:curve_selection}
	Consider a definably complete locally o-minimal expansion of an ordered group $\mathcal M=(M,<,+,0,\ldots)$.
	Let $X$ be a definable subset of $M^n$ which is not closed.
	Take a point $a \in \partial X$.
	There exist a small positive $\varepsilon$ and a definable continuous map $\gamma:(0,\varepsilon) \rightarrow X$ such that $\lim_{t \to +0}\gamma(t)=a$.
\end{proposition}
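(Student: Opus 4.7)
The plan is to apply definable choice to obtain a definable selector near $a$ and then restrict to an interval on which it is automatically continuous.

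Since $a \in \partial X \subseteq \mycl(X)$, every neighborhood of $a$ meets $X$, so for every $t > 0$ the set $\{x \in X : |x - a| < t\}$ is nonempty, where $|\cdot|$ denotes the sup norm on $M^n$. I would apply Proposition \ref{prop:definable_choice} to the definable relation $Y = \{(t,x) \in (0,\infty) \times X : |x - a| < t\}$ together with the projection $\pi:M^{1+n} \to M$ onto the first coordinate; note that $\pi(Y) = (0,\infty)$. This yields a definable map $t \mapsto (t, \psi(t))$, i.e. a definable $\psi:(0,\infty) \to X$ satisfying $|\psi(t) - a| < t$ for every $t > 0$.

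The main obstacle is now to ensure continuity of $\psi$ near $0$. For each coordinate $\psi_i : (0,\infty) \to M$, Proposition \ref{prop:pre}(9) produces a partition $(0,\infty) = X_d^{(i)} \cup X_c^{(i)} \cup X_+^{(i)} \cup X_-^{(i)}$ into discrete, locally constant, locally strictly increasing, and locally strictly decreasing parts. Applying Proposition \ref{prop:pre}(8) after composition with a definable order-reversing bijection $(0,\infty) \to (0,\infty)$ (e.g.\ $s \mapsto 1/s$) shows that every definable subset of $(0,\infty)$ either contains an interval of the form $(0,r)$ or is disjoint from one. Since $X_d^{(i)}$ is discrete it contains no such interval, so one of $X_c^{(i)}, X_+^{(i)}, X_-^{(i)}$ contains an interval $(0,\varepsilon_i)$; on that interval $\psi_i$ is continuous.

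Setting $\varepsilon := \min_{1 \le i \le n} \varepsilon_i$, the restriction $\gamma := \psi|_{(0,\varepsilon)}$ is a definable continuous curve in $X$, and the bound $|\psi(t) - a| < t$ combined with Proposition \ref{prop:limit} yields $\lim_{t \to +0} \gamma(t) = a$. The only nonroutine ingredient is the coordinate-wise continuity argument; all other steps are direct applications of the results recalled above.
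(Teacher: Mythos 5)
The paper does not actually prove this proposition; it simply cites \cite[Corollary 2.9]{Fuji5}. Your argument supplies the standard proof (definable choice to get a selector $\psi$ with $|\psi(t)-a|<t$, then local monotonicity to make it continuous on a small interval $(0,\varepsilon)$), and it is correct in outline. Two points deserve attention, though. First, the justification via the order-reversing bijection $s \mapsto 1/s$ is not available here: the proposition is stated for an expansion of an ordered \emph{group}, where no such definable bijection of $(0,\infty)$ need exist. This is easily repaired — the dichotomy you want (every definable subset of $(0,\infty)$ either contains some $(0,r)$ or is disjoint from some $(0,r)$) follows directly from applying local o-minimality at the point $0$ itself: choose an interval $I \ni 0$ on which the set is a finite union of points and open intervals. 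Second, a citation-hygiene issue: Proposition \ref{prop:pre} (in particular parts (8) and (9)) is stated in the paper only for expansions of ordered \emph{fields}, whereas the curve selection lemma is asserted for ordered groups; the local monotonicity theorem does hold in the group setting (it goes back to \cite{KTTT}), but if you want the proof to be self-contained relative to this paper you should either note that (9) holds without the field assumption or restrict to the field case, which is all that is used elsewhere in the paper. The final limit computation is fine and in fact does not even need Proposition \ref{prop:limit}, since $|\psi(t)-a|<t$ gives the limit directly.
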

\begin{proof}
	See \cite[Corollary 2.9]{Fuji5}. 
\end{proof}

\begin{theorem}\label{thm:zeroset}
	Consider a definably complete locally o-minimal expansion of an ordered field $\mathcal F=(F,<,+,0,\cdot,1,\ldots)$.
	A definable closed set is the zero set of a $\mathcal D^r$ function for $r<\infty$.
\end{theorem}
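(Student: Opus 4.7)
The plan is to first construct a continuous definable function whose zero set is $A$, and then upgrade its regularity to class $\mathcal D^r$ using the tubular decomposition from Theorem \ref{thm:tubular_decom}. Let $A \subseteq F^n$ be closed definable. The extreme cases $A = \emptyset$ and $A = F^n$ are handled by constant functions, so assume $\emptyset \neq A \neq F^n$. By definable completeness, the squared distance function
$$
d(x) = \inf\{|x-a|^2 : a \in A\}
$$
is well defined, continuous, and definable, and satisfies $d^{-1}(0) = A$ since $A$ is closed. The difficulty is that $d$ is in general only continuous, not $\mathcal D^r$, so this alone does not finish the proof.

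To produce a $\mathcal D^r$ function with the same zero set, I would apply Theorem \ref{thm:tubular_decom} to partition $F^n$ into special definable $\mathcal D^r$ submanifolds compatibly with $A$, so that both $A$ and $F^n \setminus A$ are unions of strata. For each stratum $S \subseteq F^n \setminus A$, its tubular neighborhood $N_S$ supplies definable $\mathcal D^r$ defining functions $h_{S,1},\ldots,h_{S,k}$ describing $S$ locally together with a $\mathcal D^r$ retraction onto $S$. A nonnegative definable $\mathcal D^r$ function $g_S : F^n \to F$ that is positive on $S$, vanishes outside $N_S$, and vanishes to order $r+1$ along $\partial S \subseteq A$ can be built by raising the defining functions to the $(r+1)$-st power and multiplying by a $\mathcal D^r$ cutoff supported in $N_S$. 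Setting $f = \sum_{S} g_S$, where the sum runs over strata of $F^n \setminus A$, should then yield a nonnegative definable function $f$ with $f^{-1}(0) = A$.

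The main obstacle I anticipate is verifying that $f$ is globally $\mathcal D^r$, particularly at points of $A$. Since strata of $F^n \setminus A$ can accumulate on $A$, the sum must be locally finite; this should follow from the structural properties of the decomposition in Theorem \ref{thm:tubular_decom}, perhaps after rescaling each $g_S$ by a definable factor that is small compared with the distance from $\overline{S}$ to $a$. At each $a \in A$, every $g_S$ contributing to the sum near $a$ must vanish together with its partial derivatives of order $\leq r$, which is exactly what raising to the $(r+1)$-st power forces. A subsidiary task is to construct the required definable $\mathcal D^r$ cutoff supported in a tubular neighborhood; in an ordered field this can be done explicitly using products of shifted powers such as $(t-\alpha)^{r+1}(\beta-t)^{r+1}$ extended by zero outside $[\alpha,\beta]$, so no extra machinery is needed.
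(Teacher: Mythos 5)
Your opening move (the squared distance function) is fine, but the second half of the argument has a genuine gap, and it sits exactly at the hard point of the theorem. For a stratum $S$ of $F^n\setminus A$ whose frontier meets $A$, you need the bump $g_S$ --- positive on $S$ and supported in the tube $N_S$ --- to extend by zero to a function that is $\mathcal C^r$ at the points of $\partial S\subseteq A$, i.e.\ $g_S$ and all its partials up to order $r$ must tend to $0$ there. Note first that without damping this fails outright: the natural cutoff built from the tubular data equals $1$ on $S$ itself, and $\partial S\subseteq\mycl(S)$, so the extension by zero is not even continuous at $\partial S$. Raising the $\mathcal D^r$ defining functions of the tube to the power $r+1$ does not repair this: if $h$ is $\mathcal D^r$ on the open tube with $h\to 0$ at $\partial S$, the derivatives of $h$ itself may be unbounded as one approaches $\partial S$ (the data $\eta,\rho$ of Theorem \ref{thm:tubular_decom} are controlled only on $T$, not up to its frontier), and then $h^{r+1}$ extended by zero need not even be $\mathcal C^1$; compare $h(t)=t^{1/(r+2)}$, which is definable since a definably complete expansion of an ordered field is real closed. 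Your proposed fix --- damping $g_S$ by a factor small compared with the distance to $\partial S$ --- is circular: the natural damping factor is a power of the distance to $A$ (or to $\partial S$), which is only continuous, and to replace it by a $\mathcal D^r$ function you would need precisely a $\mathcal D^r$ function vanishing on that closed set, which is the statement being proved. The same circularity appears for the full-dimensional strata of $F^n\setminus A$: there the tube is the stratum itself, so $g_S$ must be a globally $\mathcal D^r$ function positive exactly on a definable open set, again an instance of the theorem. Any construction along these lines would have to be organized as an induction with a genuinely new idea for this flattening step, and that idea is missing.

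For comparison, the paper does not construct anything: it observes that a definably complete locally o-minimal structure is d-minimal (every definable subset of $F$ either has nonempty interior or is discrete, and this persists in elementarily equivalent structures) and then invokes the $\mathcal C^p$ zero set property for definably complete d-minimal structures from \cite{MT}. The analytic work your sketch glosses over is carried out in that external result.
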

\begin{proof}
	A definably complete locally o-minimal structure is a d-minimal structure because a definable subset of $F$ either has a nonempty interior or is discrete by Definition \ref{def:dim} and Proposition \ref{prop:pre}(1) and any structure elementarily equivalent to $\mathcal F$ is definably complete and locally o-minimal.
	The theorem was already proven when the structure is definably complete and d-minimal.
	See \cite[2.9]{MT}.
\end{proof}

We next recall the definition of special submanifolds.
The definition below given in \cite[Definition 5.4, Definition 5.5]{F} is not the literally same as the definition of special submanifolds given in \cite[Definition 3.1]{Fuji5}, but they coincide by \cite[Proposition 3.13]{Fuji5}.
We also slightly extend the definitions to the $\mathcal C^r$ case.

\begin{definition}
Let $\mathcal F=(F,<,+,0,\cdot,1,\ldots)$ be an expansion of an ordered field.
Let $X$ be a definable subset of $F^n$ of dimension $d$ and $\pi:F^n \rightarrow F^d$ be a coordinate projection.
Let $\sigma$ be a permutation of $\{1,\ldots,n\}$ such that the composition $\pi \circ \overline{\sigma}$ is the coordinate projection onto first $d$ coordinates, where $\overline{\sigma}:F^n \rightarrow F^n$ is the map given by $\overline{\sigma}(x_1,\ldots, x_n)=(x_{\sigma(1)},\ldots,x_{\sigma(n)})$.

We first consider the case in which $\overline{\sigma}(X) \subseteq F^d \times (0,1)^{n-d}$. 
A point $(a,b) \in F^n$ is \textit{$(X,\pi)$-normal} if there exist a definable neighborhood $A$ of $a$ in $F^d$ and a definable neighborhood $B$ of $b$ in $F^{n-d}$ such that either $A \times B$ is disjoint from $\overline{\sigma}(X)$ or $(A \times B) \cap \overline{\sigma}(X)$ is the graph of a definable continuous map $f:A \rightarrow B$.
We call the point $(a,b) \in F^n$ \textit{$(X,\pi)$-$\mathcal C^r$-normal} if the function $f$ given above is a $\mathcal D^r$ map.
A point $a \in F^d$ is \textit{$(X,\pi)$-bad} if it is the projection of a non-$(X,\pi)$-normal point; otherwise, the point $a$ is called \textit{$(X,\pi)$-good}.
We define \textit{$(X,\pi)$-$\mathcal C^r$-bad} points and \textit{$(X,\pi)$-$\mathcal C^r$-good} points similarly.

If $X$ is unbounded, let $\phi:F \rightarrow (0,1)$ be a definable $C^r$ diffeomorphism.
For simplicity, we assume that $\pi$ is the projection onto the first $d$ coordinates.
We consider the other cases similarly.
Consider the map $\psi:=\operatorname{id}^d \times \phi^{n-d}:F^d \times F^{n-d} \rightarrow F^d \times (0,1)^{n-d}$.
We say that $a$ is \textit{$(X,\pi)$-good} if it is $(\psi(X),\pi)$-good.
We define $(X,\pi)$-bad points etc. similarly.

A definable subset is a \textit{$\pi$-quasi-special submanifold} or simply a \textit{quasi-special submanifold} if $\pi(X)$ is a definable open set and, for every point $x \in \pi(X)$, there exists an open box $U$ in $M^d$ containing the point $x$ satisfying the following condition:
For any $y \in X \cap \pi^{-1}(x)$, there exist an open box $V$ in $M^n$ with $y \in V$ and a definable continuous map $\tau:U \rightarrow M^n$ such that $\pi(V)=U$, $\tau(U)=X \cap V$ and the composition $\pi \circ \tau$ is the identity map on $U$.
It is known that a definable set $X$ is a $\pi$-quasi-special submanifold if and only if every point of $X$ is $(X,\pi)$-normal by \cite[Lemma 4.2]{Fuji4} and \cite[Theorem 2.5]{FKK} when the structure $\mathcal F$ is definably complete and locally o-minimal.
We define $\pi$-quasi-special $\mathcal C^r$ submanifolds in the same manner.

The definable set $X$ is a \textit{$\pi$-special submanifold} if every point of $\pi(X)$ is $(X,\pi)$-good.
We simply call it a special submanifold when the projection $\pi$ is clear from the context.
A \textit{$\pi$-special $\mathcal C^r$ submanifold} is defined similarly.
See \cite[Example 3.5]{Fuji5} for a typical example of quasi-special submanifold which is not a special submanifold.

Let $\{X_i\}_{i=1}^m$ be a finite family of definable subsets of $F$. 
A \textit{decomposition of $F^n$ into special $\mathcal C^r$ submanifolds partitioning $\{X_i\}_{i=1}^m$} is a finite family of special $\mathcal C^r$ submanifolds $\{C_i\}_{i=1}^N$ such that $\bigcup_{i=1}^N C_i =F^n$, $C_i \cap C_j = \emptyset$ when $i \neq j$ and either $C_i$ has an empty intersection with $X_j$ or is contained in $X_j$ for any $1 \leq i \leq m$ and $1 \leq j \leq N$.
A decomposition $\{C_i\}_{i=1}^N$ into special $\mathcal C^r$ submanifolds satisfies the \textit{frontier condition} if the closure of any special submanifold $C_i$ is the union of a subfamily of the decomposition.
\end{definition}

We first demonstrate that $F^n$ is partitioned into special $\mathcal C^r$ submanifolds.

\begin{proposition}\label{prop:multi1}
Let $\mathcal F=(F,<,+,0,\cdot,1,\ldots)$ be a definably complete locally o-minimal expansion of an ordered field.
Let $r \geq 0$.
Let $\{X_i\}_{i=1}^m$ be a finite family of definable subsets of $F^n$.
There exists a decomposition of $F^n$ into special $\mathcal C^r$ submanifolds partitioning $\{X_i\}_{i=1}^m$.
In addition, the number of special $\mathcal C^r$ submanifolds is bounded by a function of $m$ and $n$.
\end{proposition}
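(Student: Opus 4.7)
My plan is to induct on the ambient dimension $n$. For the base case $n = 1$, form the $2^m$ Boolean atoms $A_J = \bigcap_{i \in J} X_i \cap \bigcap_{i \notin J}(F \setminus X_i)$, which partition $F$ and refine $\{X_i\}$. Split each $A_J$ as $\myint(A_J) \sqcup (A_J \setminus \myint(A_J))$: the interior is an open subset of $F$, hence a $1$-dimensional special $\mathcal C^r$ submanifold, while the remainder lies in $\partial A_J$, so by Proposition \ref{prop:pre}(3) has dimension at most $0$, and by Proposition \ref{prop:pre}(1) is discrete and closed, a $0$-dimensional special submanifold. At most $2^{m+1}$ pieces arise.

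For the inductive step, assume the proposition in every smaller ambient dimension. The same Boolean-atom trick peels off the $n$-dimensional open parts $U_J := \myint(A_J) \subseteq F^n$, leaving a closed definable residue $Y := F^n \setminus \bigsqcup_J U_J$ of dimension strictly less than $n$. I would then decompose $Y$ by a secondary induction on $d := \dim Y$. Picking a coordinate projection $\pi : F^n \to F^d$ for which some open box $V \subseteq \pi(Y)$ exists (Proposition \ref{prop:pre}(7) supplies such a point and projection), the local graph description from Lemma 3.6 of \cite{Fuji4} together with \cite[Theorem 2.5]{FKK} --- already invoked in the proof of Proposition \ref{prop:cr_pre} above --- realizes $Y \cap \pi^{-1}(V)$ as the graph of a definable continuous section that can be arranged to be $\mathcal C^r$ on $V$ after passing to an even smaller open box, yielding a $\pi$-special $\mathcal C^r$ submanifold. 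Removing finitely many such pieces and reapplying the secondary inductive hypothesis to the strictly lower-dimensional residue completes the decomposition of $Y$.

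The principal obstacle is bounding the total number of pieces by a function of $m$ and $n$. The crucial dimensional estimate is that the $(Y,\pi)$-$\mathcal C^r$-bad locus has dimension strictly less than $d$ for at least one coordinate projection $\pi$, which is where Proposition \ref{prop:cr_pre} applied to the locally extracted graph function becomes decisive: the failure of that function to be $\mathcal C^r$ can only happen on a definable set of strictly smaller dimension. A subsidiary delicacy is that being a \emph{special} submanifold is stronger than being merely \emph{quasi-special}: not only must every point of the extracted piece be $(Y,\pi)$-normal, but no point of $Y$ above its $\pi$-image may fail to be normal. This forces one to carefully carve $\pi(Y)$ into its good and bad loci and to verify that at each stage of the iteration the piece extracted is genuinely special, not just quasi-special.
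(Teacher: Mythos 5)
There is a genuine gap in your inductive step. After peeling off the open Boolean atoms you are left with a definable residue $Y$ of dimension $d<n$, and you propose to extract one graph piece $Y\cap\pi^{-1}(V)$ over a single open box $V\subseteq\pi(Y)$ (obtained from Proposition \ref{prop:pre}(7) and the local graph description), and then to ``remove finitely many such pieces and reapply the secondary inductive hypothesis to the strictly lower-dimensional residue.'' That residue is not strictly lower-dimensional: removing $\pi^{-1}(V)$ for one open box $V$ leaves, in general, a set that still has dimension $d$ (the box $V$ need not exhaust, or even be large in, $\pi(Y)$). Nothing in your argument shows that finitely many such extractions suffice to drop the dimension, so the secondary induction does not terminate, and a fortiori you get no bound on the number of pieces in terms of $m$ and $n$. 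What you are implicitly trying to do here is reprove from scratch the decomposition of an arbitrary definable set into special ($\mathcal C^0$) submanifolds, which is a substantial theorem in its own right.

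The paper's proof avoids exactly this by quoting that theorem: after the same Boolean-atom reduction, it invokes the already established $r=0$ case (\cite[Theorem 5.6]{F}, \cite[Theorem 3.19]{Fuji5}) to assume from the outset that $X$ is a $\pi$-special ($\mathcal C^0$) submanifold with $\pi(X)$ open, and the only new content is the upgrade to class $\mathcal C^r$. That upgrade is done globally over the base, not box by box: one forms the set $D$ of points of $X$ where the local graph functions fail to be $\mathcal C^r$, shows $\dim D<d$ via Proposition \ref{prop:cr_pre} and Proposition \ref{prop:pre}(1),(6),(7), sets $V=\myint(\pi(X))\setminus\mycl(\pi(D))$, and observes that $X\cap\pi^{-1}(V)$ is a special $\mathcal C^r$ submanifold while $X\cap\pi^{-1}(\pi(X)\setminus V)$ has dimension $<d$, so the induction on $d$ genuinely advances in one step. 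Your observation that specialness (as opposed to quasi-specialness) is a fiberwise-global condition is correct, but it is resolved automatically by this construction, since removing the closed set $\mycl(\pi(D))$ from the base keeps every remaining base point good. To repair your argument you would either need to cite the $r=0$ decomposition theorem as the paper does, or supply a genuine proof that the non-extracted part of $Y$ drops in dimension after a bounded number of extractions.
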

\begin{proof}
We put $X_i^0=X_i$ and $X_i^1=F^n \setminus X_i$ for all $1 \leq i \leq m$.
For any sequence $\mathbf{j}=(j_1,\ldots, j_m) \in \{0,1\}^m$ of length $m$, we set $X^{\mathbf{j}}= \bigcap_{i=1}^m X_i^{j_i}$. 
Let $\{C_i^{\mathbf{j}}\}_{i=1}^{N_{\mathbf j}}$ be a partition of $X^{\mathbf{j}}$ into special $\mathcal C^r$ submanifold.
The union $\bigcup_{ \mathbf{j} \in \{0,1\}^m}\{C_i^{\mathbf{j}}\}_{i=1}^{N_{\mathbf j}}$ is a decomposition of of $F^n$ into special $\mathcal C^r$ submanifolds partitioning $\{X_i\}_{i=1}^m$.
Therefore, we have only to partition a given definable set $X$ into special $\mathcal C^r$ submanifolds.

The proposition was already demonstrated when $r=0$ in \cite[Theorem 5.6]{F} or more generally in \cite[Theorem 3.19]{Fuji5}.
Therefore, we may assume that $X$ is a $\pi$-special submanifold, where $\pi:F^n \rightarrow F^d$ is a coordinate projection.
We assume that $\pi$ is the projection onto the first $d$ coordinate for simplicity of notations.
We demonstrate the proposition by the induction on $d=\dim X$.
It is obvious when $d=0$.
By the definition of a special submanifold, for any $x \in X$, there exists an open box $U$ containing $x$ and a definable continuous map $\xi_x:\pi(U) \rightarrow F^{n-d}$ such that $X \cap U$ is the graph of $\xi_x$.
We set 
$$D=\{x \in X\;|\; \xi_x \text{ is not of class }C^r\}.$$
For an arbitrary $x \in X$, the definable set $\pi(U \cap D)$ is of dimension $<d$ by Proposition \ref{prop:cr_pre}
We have $\dim(U \cap D)<d$ by Proposition \ref{prop:pre}(1),(6).
We finally get $\dim D<d$ by  Proposition \ref{prop:pre}(7).

Set $V=\myint(\pi(X)) \setminus \mycl(\pi(D))$.
We have $\dim \pi(X) \setminus V<d$ by Proposition \ref{prop:pre}(2),(3),(5).
It is obvious that $X'=X \cap \pi^{-1}(V)$ is a special $\mathcal C^r$ submanifold.
Set $X''=X \cap \pi^{-1}(\pi(X) \setminus V)$.
We have $\dim X''<d$ by Proposition \ref{prop:pre}(6).
We can get a partition of $X''$ by the induction hypothesis.
The union of the partition of $X''$ with $\{X'\}$ is a desired partition of $X$.

The `in addition' part is clear from the proof.
\end{proof}

\begin{proposition}\label{prop:dist}
	Let $\mathcal F=(F,<,+,\cdot,0,1,\ldots)$ be a definably complete locally o-minimal expansion of an ordered field.
	Let $\pi:F^n \to F^d$ be a coordinate projection and $X$ be a $\pi$-special manifold.
	Let $f$ be a positive definable continuous function on $X$.
	Then, the definable function $g:\pi(X) \to F$ defined by $g(t)=\inf_{x \in X \cap \pi^{-1}(t)}f(x)$ is a positive definable continuous function.
\end{proposition}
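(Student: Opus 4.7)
The plan is to first apply the definable diffeomorphism $\psi = \operatorname{id}^d \times \phi^{n-d}$ from the definition of special submanifold to reduce to the case $\psi(X)\subseteq F^d\times(0,1)^{n-d}$; positivity and continuity of $g$ are unaffected by replacing $f$ with $\tilde f := f\circ\psi^{-1}$, and definability of $g$ is immediate from its defining formula (the infimum exists in $F$ by definable completeness). In this bounded setting I would establish positivity and continuity separately at an arbitrary $t_0\in\pi(X)$, in each case arguing by contradiction and exploiting the fact that $t_0$ is $(\psi(X),\pi)$-good.

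For positivity, suppose $g(t_0)=0$. The definable choice lemma (Proposition \ref{prop:definable_choice}) produces a definable $h\colon(0,\varepsilon_0)\to \psi(X)\cap\pi^{-1}(t_0)$ with $\tilde f(h(\varepsilon))<\varepsilon$; since $h$ takes values in the bounded set $\{t_0\}\times(0,1)^{n-d}$, Proposition \ref{prop:limit} supplies a limit $x^{*}=(t_0,b^{*})$ with $b^{*}\in[0,1]^{n-d}$. Goodness of $t_0$ makes $(t_0,b^{*})$ normal, so some open box $A\times B$ around it meets $\psi(X)$ either emptily or in the graph of a continuous $\xi\colon A\to B$. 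The empty alternative is excluded by $h(\varepsilon)\to x^{*}$ staying in $\psi(X)$; in the graph alternative, for small $\varepsilon$ the point $h(\varepsilon)\in A\times B$ must equal the unique fibre point $(t_0,\xi(t_0))$, so $h$ is eventually constant with value $x^{*}\in\psi(X)$, and continuity of $\tilde f$ yields $\tilde f(x^{*})=0$, contradicting $\tilde f>0$.

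For continuity at $t_0$ I would establish upper and lower semicontinuity separately. Upper semicontinuity is direct: given $\varepsilon>0$, choose $(t_0,b_0)\in\psi(X)\cap\pi^{-1}(t_0)$ with $\tilde f(t_0,b_0)<g(t_0)+\varepsilon$, apply normality at $(t_0,b_0)$ to produce a local continuous section $\xi$ through $b_0$, and conclude $g(t)\leq \tilde f(t,\xi(t))$ on a neighborhood of $t_0$ before letting $t\to t_0$. For lower semicontinuity, assume there are $t$ arbitrarily near $t_0$ with $g(t)<g(t_0)-\delta$; the curve selection lemma (Proposition \ref{prop:curve_selection}) yields a definable curve $\gamma(s)\to t_0$ with $g(\gamma(s))\leq g(t_0)-\delta$, and definable choice together with Proposition \ref{prop:limit} and the normality argument above produces a limit point $x^{*}\in\psi(X)\cap\pi^{-1}(t_0)$ with $\tilde f(x^{*})\leq g(t_0)-\delta/2$, contradicting the definition of $g(t_0)$. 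The main obstacle throughout is the possibility that the limit point $b^{*}$ lies on $\partial[0,1]^{n-d}$, which corresponds to an escape to infinity in the original coordinates; this is precisely what the goodness assumption forbids, since the graph appearing in the normality definition is contained in $\psi(X)\subseteq F^d\times(0,1)^{n-d}$ and hence its fibre over $t_0$ lies in $(0,1)^{n-d}$.
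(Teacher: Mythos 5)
Your proof is correct and follows essentially the same strategy as the paper's: extract a definable curve of (near-)minimizers over parameters tending to $t_0$, use boundedness and Proposition \ref{prop:limit} to obtain a limit point, and use goodness/normality of the special submanifold to place that limit on a local continuous sheet over $t_0$, yielding the contradiction. The paper packages your two semicontinuity arguments into a single contradiction and obtains positivity (and attainment of the infimum) directly from the discreteness of the fiber image $f(X\cap\pi^{-1}(t))$ via Proposition \ref{prop:pre}(1),(5), but these are organizational rather than substantive differences.
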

\begin{proof}
	Since the definable set $\{f(x)\;|\;x \in X \cap \pi^{-1}(t)\}$ is discrete and closed by the assumption and Proposition \ref{prop:pre}(1),(5), we can construct a definable map $h:U \to X$ such that $g(t)=f(h(t))$ for each $t \in U$ by Proposition \ref{prop:definable_choice}.
	
	Assume for contradiction that $g$ is not continuous at $t_0 \in \pi(X)$.
	There exists $\varepsilon >0$ such that, for any $\delta>0$, there exists $t \in U$ satisfying the inequalities $|t-t_0|<\delta$ and $|g(t) - g(t_0)| \geq \varepsilon$.
	By Proposition \ref{prop:definable_choice}, we can find a definable map $\gamma:(0,\infty) \to U$ such that $|\gamma(s)-t_0|<s$ and $|g(\gamma(s))-g(t_0)| \geq \varepsilon$ for every $s>0$.
	We can find $\delta>0$ so that the restrictions of $\gamma$ and $h \circ \gamma$ to $(0,\delta)$ are continuous by Proposition \ref{prop:pre}(9).
	
	We can take an open box $B$ containing the point $t_0$ and contained in $\pi(X)$ such that, for any $x \in X \cap \pi^{-1}(t_0)$, there exists an open box $V_x$ in $F^n$ containing the point $x$ such that $\pi(V_x)=U$, 
	\begin{equation}
	X \cap \pi^{-1}(U) \subseteq \bigcup_{x \in X \cap \pi^{-1}(t_0)}V_x \label{eq:bbb}
	\end{equation}
	and the restriction of $\pi$ to $X \cap V_x$ is a definable homeomorphism onto $U$ by \cite[Proposition 3.13]{Fuji5}. 
	This implies that $h \circ \gamma((0,\delta))$ is contained in $X \cap V_x$ for some $x \in X \cap \pi^{-1}(t_0)$ because  $h \circ \gamma((0,\delta))$ is definably connected.
	We fix such an $x \in X \cap \pi^{-1}(t_0)$.
	Let $\varphi:U \to X \cap V_x$ be the inverse of the restriction of $\pi$ to $X \cap V_x$.
	Observe that $h \circ \gamma(s)=\varphi \circ \gamma(s)$ for $0<s<\delta$.
	We may assume that $\varphi$ is bounded after shrinking $U$ and taking smaller $\delta$ if necessary.
	We can find the limit $\lim_{s \to 0} \varphi \circ \gamma(s)$ by Proposition \ref{prop:limit}, and obviously it coincides with $x$ because $\varphi$ is continuous.
	We have demonstrated that $x=\lim_{s \to 0} h \circ \gamma(s)$.
	We have $|f(x)-g(t_0)| \geq \varepsilon$ because $f$ is continuous and $|f \circ h \circ \gamma(s)-g(t_0)| \geq \varepsilon$ for every $0<s<\delta$.
	We also get $\pi(x)=\lim_{s \to 0} \pi \circ h \circ \gamma(s) = \lim_{s \to 0} \gamma(s)=t_0$ because $\pi$ is continuous and the inequality $|t_0-\gamma(s)|<s$ holds. 
	
	We have $f(x) \geq g(t_0)$ by the definition of $g$ and the fact $\pi(x)=t_0$.
	Combining it with the inequality $|f(x)-g(t_0)| \geq \varepsilon$, we get the inequality $f(x) \geq g(t_0)+\varepsilon$.
	If we choose $s>0$ sufficiently small, we have $|f(x)-f(h(\gamma(s)))|< \varepsilon/2$ because $x=\lim_{s \to 0} h \circ \gamma(s)$ and $f$ is continuous.
	We may also assume that $|g(t_0)-f (x_s)|=|f \circ h(t_0)-f (x_s)|<\varepsilon/2$ for some $x_s \in X$ with $\pi(x_s)=\gamma(s)$ because the restriction of $\pi$ to $X \cap V_{h(t_0)}$ is a definable homeomorphism onto $U$.
	These inequalities imply $f(h(\gamma(s)))>f(x_s)$, which contradicts the definitions of $g$ and $h$.	
\end{proof}

We recall a special submanifold with a tubular neighborhood.
\begin{definition}[\cite{Fuji5}]\label{def:tub1}
Let $\mathcal F=(F,<,+,0,\cdot,1,\ldots)$ be an expansion of an ordered field.
Let $x=(x_1,\ldots, x_m), y=(y_1,\ldots, y_m)$ be points in $F^m$.
The notation $ \mydist_m:F^m \times F^m \rightarrow F$ denotes the $\mathcal D^{\infty}$  function given by $\mydist_m(x,y)=\sum_{i=1}^n(x_i-y_i)^2$.
Set $\mathcal B_m(x,r)=\{y \in F^m\;|\;  \mydist_m(x,y) < r\}$ for all $x \in F^m$ and $r>0$.
Note that the definition of $\mathcal B_m(x,r)$ is slightly different from that in \cite{Fuji5}.
For a given coordinate projection $\pi:F^n \rightarrow F^d$, take a permutation $\sigma$ of $\{1,\ldots,n\}$ such that the composition $\pi \circ \overline{\sigma}$ is the coordinate projection onto first $d$ coordinates.
Set $X^{\pi}_u=\{x \in F^{n-d}\;|\; \overline{\sigma}^{-1}(u,x) \in X\}$ for all $u \in F^d$ and a definable subset $X$ of $F^n$.
The set $X^{\pi}_u$ depends on the choice of $\sigma$, but we only discuss the features of $X^{\pi}_u$ independent of $\sigma$ in this paper.

When $\dim X<n$, the tuple $(X,\pi,T,\eta,\rho)$ is a \textit{special $\mathcal C^r$ submanifold
 with a tubular neighborhood} if 
\begin{enumerate}
\item[(a)] $\pi:F^n \rightarrow F^d$ is a coordinate projection, where $d=\dim X$;
\item[(b)] $X$ is a $\pi$-special $\mathcal C^r$ submanifold such that $U=\pi(X)$ is a definable open set;
\item[(c)] $T$ is a definable open subset of $\pi^{-1}(U)$;
\item[(d)] $\eta:U \rightarrow F$ is a positive bounded $\mathcal D^r$ function such that, for all $u \in U$, we have 
$$T^{\pi}_u= \bigcup_{x \in X^{\pi}_u}  \mathcal B_{n-d}(x,\eta(u))$$
 and $$\mathcal B_{n-d}(x_1,\eta(u)) \cap \mathcal B_{n-d}(x_2,\eta(u)) = \emptyset$$ for all $x_1,x_2 \in X^{\pi}_u$ with $x_1 \neq x_2$;
\item[(e)] $\rho:T \rightarrow X$ is a $\mathcal D^r$ retraction such that, for any $u \in U$, we have $\rho(\pi^{-1}(u) \cap T) \subseteq \pi^{-1}(u) \cap X$ and $\rho(\overline{\sigma}^{-1}(u,y)))=\overline{\sigma}^{-1}(u,x)$ for all $x \in X^{\pi}_u$ and $y \in \mathcal B_{n-d}(x,\eta(u))$.
\end{enumerate}
When $\dim X=n$, the tuple $(X,\pi,T,\eta,\rho)$ is called a \textit{special $\mathcal C^r$ submanifold
 with a tubular neighborhood} if $X$ is open, $T=X$, $\eta \equiv 0$, and $\pi$ and $\rho$ are the identity maps on $F^n$ and $X$, respectively.
A \textit{decomposition of $F^n$ into special $\mathcal C^r$ submanifolds with tubular neighborhoods} is a finite family of special $\mathcal C^r$ submanifolds with tubular neighborhoods $\{(X_i,\pi_i,T_i,\eta_i,\rho_i)\}_{i=1}^N$ such that $\{(X_i,\pi_i)\}_{i=1}^N$ is a decomposition of $F^n$ into special $\mathcal C^r$ submanifolds.
We say that a decomposition $\{(X_i,\pi_i,T_i,\eta_i,\rho_i)\}_{i=1}^N$ of $F^n$ into  special $\mathcal C^r$ submanifolds with tubular neighborhoods partitions a given finite family of definable sets and satisfies the frontier condition if so does the decomposition into special submanifolds $\{(X_i,\pi_i)\}_{i=1}^N$.
\end{definition}

The following theorem guarantees the existence of the decomposition.

\begin{theorem}\label{thm:tubular_decom}
Let $\mathcal F=(F,<,+,0,\cdot,1,\ldots)$ be a definably complete locally o-minimal expansion of an ordered field.
Let $r$ be a nonnegative integer.
Let $\{X_i\}_{i=1}^m$ be a finite family of definable subsets of $F^n$.
There exists a decomposition of $F^n$ into special $\mathcal C^r$ submanifolds with tubular neighborhoods partitioning $\{X_i\}_{i=1}^m$ and satisfying the frontier condition.
In addition, the number of special $\mathcal C^r$ submanifolds with tubular neighborhoods is bounded by a function of $m$ and $n$.
\end{theorem}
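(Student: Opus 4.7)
The plan is to strengthen Proposition \ref{prop:multi1} in two successive stages on top of its conclusion: first secure the frontier condition by iterated refinement, and then attach tubular neighbourhoods to every piece. Proposition \ref{prop:multi1} already supplies a decomposition of $F^n$ into special $\mathcal C^r$ submanifolds partitioning $\{X_i\}$ with the required bound on the number of pieces, so the work is to upgrade it.

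For the frontier condition I would iterate the refinement step that re-applies Proposition \ref{prop:multi1} to the enlarged family $\{X_i\}\cup\{C_k\}\cup\{\mycl(C_k)\}_k$, where $\{C_k\}$ is the current decomposition. After one pass each new cell $C'$ is either contained in or disjoint from every $\mycl(C_k)$, so residual frontier violations are confined to pairs of new cells whose closures meet the old boundaries $\mycl(C_k)\setminus C_k$; these boundaries have strictly smaller dimension than their parent cells by Proposition \ref{prop:pre}(3). An induction on $\max_k\dim C_k$, combined with Proposition \ref{prop:pre}(2), shows that the loop terminates after finitely many passes, and the cell-count bound of Proposition \ref{prop:multi1} propagates because each pass multiplies the count by a function of $m$ and $n$.

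Next, for each resulting special $\mathcal C^r$ submanifold $(X,\pi)$ with $d=\dim X<n$ and $U=\pi(X)$, I would construct the data $(T,\eta,\rho)$ as follows. The fibre $X^{\pi}_u$ is discrete and closed in $F^{n-d}$ by Proposition \ref{prop:pre}(1), so
\[
\tilde\eta(u)=\tfrac14\inf\bigl\{\mydist_{n-d}(x_1,x_2):x_1,x_2\in X^{\pi}_u,\; x_1\neq x_2\bigr\}
\]
(set to a fixed positive constant where $X^{\pi}_u$ is a singleton) is a strictly positive definable function on $U$, and an application of Proposition \ref{prop:dist} to the fibred auxiliary set $\{(u,x_1,x_2):x_i\in X^{\pi}_u,\; x_1\neq x_2\}$ shows it is continuous. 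I would then pass to a positive bounded $\mathcal D^r$ function $\eta\le\tilde\eta$ by applying Theorem \ref{thm:zeroset} to the closed set $\{(u,t)\in U\times F: t\le 0 \text{ or } t\ge\tilde\eta(u)\}$ to obtain a $\mathcal D^r$ function positive on the strip below the graph of $\tilde\eta$, and extracting $\eta$ as an explicit $\mathcal D^r$ expression in that function. Finally define $T$ as the fibrewise union of balls $\mathcal B_{n-d}(x,\eta(u))$ around points of $X^{\pi}_u$ and $\rho$ as the fibrewise projection onto those centres; locally $X$ is the disjoint union of $\mathcal D^r$ graphs, so on each local tube $\rho$ coincides with the projection to a single such graph and is visibly $\mathcal D^r$. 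The $\dim X=n$ case is the trivial one prescribed by Definition \ref{def:tub1}.

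The main obstacle is the $\mathcal D^r$ selection of $\eta$ from the merely continuous $\tilde\eta$: since the approximation tools of Section \ref{sec:appro} are developed later in the paper, I must lean only on Theorem \ref{thm:zeroset} and on the local graph data of the special $\mathcal C^r$ submanifold, possibly after a further refinement of the decomposition on which the infimum defining $\tilde\eta$ is attained by a fixed pair of branches, so that a $\mathcal D^r$ choice of $\eta$ becomes transparent. A secondary concern is the termination of the frontier-condition loop, which has to be justified by combining Proposition \ref{prop:pre}(3) with an induction on $n$ to prevent an unbounded cascade of refinements.
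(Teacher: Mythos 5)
The paper does not prove this theorem from scratch: its proof is a citation of \cite[Theorem 3.22]{Fuji5} for the case $r=0$ together with the remark that the same argument, run with Proposition \ref{prop:multi1} in place of the $\mathcal C^0$ decomposition theorem, gives the $\mathcal C^r$ case. Your proposal is therefore an attempt to reconstruct an argument the paper outsources, and much of it is in the right spirit: the downward induction on dimension for the frontier condition is the standard refinement loop (refining lower-dimensional strata does not destroy the property that the closure of an already-treated piece is a union of pieces, the number of passes is at most $n+1$, and the cell count stays bounded by a function of $m$ and $n$); the fibrewise definition of $T$ and $\rho$ from a separating radius is correct, including the factor $\tfrac14$, which is the right constant for the squared-distance convention of Definition \ref{def:tub1}; and the positivity of $\tilde\eta(u)$ does follow from the fact that the set of pairwise distances in a fibre is discrete and closed, hence cannot accumulate at $0$.

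The genuine gap is exactly the step you flag as the main obstacle, and your proposed fix does not close it. Theorem \ref{thm:zeroset} produces a $\mathcal D^r$ function $h$ on $U\times F$ vanishing precisely on $\{(u,t)\;|\; t\le 0 \text{ or } t\ge\tilde\eta(u)\}$, but there is no ``explicit $\mathcal D^r$ expression in $h$'' that returns a positive $\mathcal D^r$ function $\eta(u)<\tilde\eta(u)$: a slice $h(u,t_0)$ for fixed $t_0$ may vanish, and any selection of the form ``the first $t$ where $h(u,\cdot)$ crosses a threshold'' is definable but not of class $\mathcal C^r$. The tool the paper actually possesses for this purpose is Lemma \ref{lem:aaa}, whose proof is independent of the present theorem: reduce to the one-variable nonincreasing function $\mu(u)=\inf\{\tilde\eta(x)\;|\;\|x\|^2\le u\}$ and smooth it using Proposition \ref{prop:pre}(8),(9) and Proposition \ref{prop:cr_pre}. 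Even that lemma does not apply verbatim, since it is stated for a closed submanifold while your base $U=\pi(X)$ is open and $\tilde\eta$ may tend to $0$ at $\partial U$; the repair is to transport the problem to the closed copy of $U$ supplied by Lemma \ref{lem:closed_mani} (the graph of $1/f$ for a definable continuous $f$ vanishing exactly on $\partial U$), or equivalently to first replace $\tilde\eta$ by $\min\{\tilde\eta,d_{\partial U}\}$ as is done in the proof of Lemma \ref{lem:appro1}. Without a worked-out $\mathcal D^r$ minorant, condition (d) of Definition \ref{def:tub1} is not met for $r\ge 1$, and this is precisely the point at which the $\mathcal C^r$ statement is harder than the $\mathcal C^0$ one proved in \cite{Fuji5}.
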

\begin{proof}
We have demonstrated the theorem in \cite[Theorem 3.22]{Fuji5} when $r=0$ under the slightly different definition of $\mathcal B_m(x,r)$.
As the first author pointed out in \cite[Remark 3.23]{Fuji5}, the proof for the case in which $r>0$ under our definition of $\mathcal B_m(x,r)$ is almost the same.
The major difference is to use Proposition \ref{prop:multi1} instead of \cite[Theorem 3.19]{Fuji5}.
We omit the details.
\end{proof}

\section{Approximation of definable function}\label{sec:appro}
\subsection{Tubular neighborhood of definable submanifold}
We construct a $\mathcal D^{r-1}$ tubular neighborhood of a $\mathcal D^r$ submanifold.
We first recall the definition of a $\mathcal D^r$ submanifold.

\begin{definition}[Definable submanifolds] 
Let $\mathcal F=(F,<,+,0,\cdot,1,\ldots)$ be an expansion of an ordered field.
Let $r$ be a nonnegative integer.
A \textit{$\mathcal D^r$ submanifold} $M$ of dimension $d$ in $F^n$ is a definable subset of $F^n$ such that, for any point $a \in M$, there exist a definable open neighborhood $U$ of the point $a$, a definable open neighborhood $V$ of the origin in $F^n$ and a $\mathcal D^r$ diffeomorphism $\varphi:U \rightarrow V$ such that $\varphi(a)$ is the origin and $$\varphi(M \cap U)=\{(x_1,\ldots, x_n) \in V\;|\; x_{d+1}=\cdots=x_n=0\}.$$
The \textit{tangent bundle} $TM$ is the set of $(x,v) \in M \times F^n$ such that $v$ is a tangent vector to $M$ at $x$.
The \textit{normal bundle} $NM$ the set of $(x,v) \in M \times F^n$ such that  $v$ is orthogonal to the tangent space $T_xM$.
They are definable sets. 
See \cite{Coste} for instance.
\end{definition}

The following lemma is useful when we reduce to the case of closed $\mathcal D^r$ submanifold.
It is a direct corollary of Theorem \ref{thm:zeroset}.
\begin{lemma}\label{lem:closed_mani}
Consider a definably complete locally o-minimal expansion of an ordered field $\mathcal F=(F,<,+,0,\cdot,1,\ldots)$.
Let $r$ be a nonnegative integer.
A $\mathcal D^r$ submanifold of $F^m$ is $\mathcal D^r$ diffeomorphic to a closed $\mathcal D^r$ submanifold of $F^{m+1}$.
\end{lemma}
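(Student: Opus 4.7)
The plan is to invoke the standard ``graph trick'': push the topological frontier $\partial M = \mycl(M) \setminus M$ out to infinity using the reciprocal of a $\mathcal D^r$ function that vanishes exactly on $\partial M$, thereby realizing $M$ as a closed graph in one extra coordinate.

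First I would observe that $\partial M$ is closed in $F^m$. Indeed, since $M$ is a $\mathcal D^r$ submanifold, each $a \in M$ admits an open neighborhood $U_a$ in which $M \cap U_a$ is closed (the chart carries it onto a linear subspace), so $M$ is closed in the open set $U := \bigcup_{a \in M} U_a$; this gives $\mycl(M) \cap U = M$ and thus $\partial M = \mycl(M) \setminus U$, a closed subset of $F^m$. Next I would apply Theorem \ref{thm:zeroset} to obtain a $\mathcal D^r$ function $f: F^m \to F$ whose zero set is exactly $\partial M$ (with the convention $f \equiv 1$ if $\partial M = \emptyset$), so that $f$ is nowhere zero on $M$. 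Then I would introduce
$$\varphi: M \to F^{m+1}, \quad \varphi(x) = (x, 1/f(x)).$$
This map is a $\mathcal D^r$ diffeomorphism of $M$ onto $N := \varphi(M)$, with inverse the $\mathcal D^\infty$ coordinate projection to $F^m$; and $N$, being the graph of $1/f|_M$, is a $\mathcal D^r$ submanifold of $F^{m+1}$ of the same dimension as $M$.

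The main and essentially only obstacle will be to verify that $N$ is closed in $F^{m+1}$. I would argue by contradiction: if $N$ were not closed, then $\partial N$ would be nonempty and the curve selection lemma (Proposition \ref{prop:curve_selection}) would produce a definable continuous curve $\gamma: (0,\varepsilon) \to N$ with $\lim_{t \to +0}\gamma(t) = (y,z) \in \partial N$. Writing $\gamma(t) = (\alpha(t), 1/f(\alpha(t)))$ with $\alpha(t) \in M$, the projection forces $\alpha(t) \to y$ and hence $y \in \mycl(M) = M \cup \partial M$. If $y \in M$, continuity of $f$ would yield $1/f(\alpha(t)) \to 1/f(y)$, giving $(y,z) = \varphi(y) \in N$, a contradiction. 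If $y \in \partial M$, then $f(y) = 0$, so $|f(\alpha(t))|$ becomes arbitrarily small and $1/f(\alpha(t))$ cannot remain bounded near $0$, contradicting its convergence to $z \in F$ afforded by Proposition \ref{prop:limit}. Ruling out both cases will show that $N$ is closed, completing the proof.
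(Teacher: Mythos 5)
Your proposal is correct and is essentially the paper's own proof: take a $\mathcal D^r$ function $f$ with $f^{-1}(0)=\partial M$ via Theorem \ref{thm:zeroset} and map $x\mapsto(x,1/f(x))$. The only difference is in verifying closedness of the image: the paper observes that $\varphi$ is a diffeomorphism of $F^m\setminus\partial M$ onto the closed hypersurface $S=\{(x,t)\;|\;f(x)t=1\}$ and that $M$ is closed in $F^m\setminus\partial M$, while you reach the same conclusion by curve selection; both arguments are valid.
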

\begin{proof}
Let $M$ be a $\mathcal D^r$ submanifold of $F^m$.
By the definition of a $\mathcal D^r$ submanifold, the frontier $\partial M$ is closed.
We can take a $\mathcal D^r$ function $f:F^n \rightarrow F$ with $f^{-1}(0)=\partial M$ by Theorem \ref{thm:zeroset}.
Set $S=\{(x,t) \in F^n \times F\;|\; f(x)t=1\}$.
Consider the $\mathcal D^r$ diffeomorphism $\varphi:F^n \setminus \partial M \rightarrow S$ given by $\varphi(x)=(x,1/f(x))$.
The image $\varphi(M)$ is a closed $\mathcal D^r$ submanifold of $F^{m+1}$ which is $\mathcal D^r$ diffeomorphic to $M$.
\end{proof}

We get the following theorem:

\begin{theorem}[$\mathcal D^{r-1}$ tubular neighborhood]\label{thm:tub}
Consider a definably complete locally o-minimal expansion of an ordered field $\mathcal F=(F,<,+,0,\cdot,1,\ldots)$.
Let $r$ be a positive integer.
A closed $\mathcal D^r$ submanifold $M$ of $F^n$ has a $\mathcal D^{r-1}$-tubular neighborhood; i.e., there exists a definable open neighborhood $U$ of the zero section of $M \times \{0\}$ in the normal bundle $NM$ such that the restriction of the map given by $NM \ni (x,v) \mapsto  x+v \in F^n$ to $U$ is a $\mathcal D^{r-1}$-diffeomorphism onto a definable open neighborhood $\Omega$ of $M$ in $F^n$.
Moreover, we can take $U$ of the form $$U=\{(x,v) \in NM\;|\; \| v  \| < \varepsilon(x)\},$$ where $\varepsilon $ is a positive $\mathcal D^r$ function on $M$ and the notation $\| v\|$ denotes the Euclidean norm of $v$.
\end{theorem}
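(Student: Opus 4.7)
The plan is to realise the tubular neighbourhood as the image of the natural map $\Phi : NM \to F^n$, $\Phi(x,v) = x+v$, restricted to a thin neighbourhood of the zero section. Since $M$ is $\mathcal{D}^r$, forming tangent and normal spaces costs one order of differentiability, so $NM$ is a $\mathcal{D}^{r-1}$ submanifold of $F^n\times F^n$ and $\Phi$ is $\mathcal{D}^{r-1}$. At each point $(x,0)$ of the zero section, the differential of $\Phi$ is the canonical isomorphism $T_xM \oplus N_xM \to F^n$, so the definable inverse function theorem (standard in this setting) makes $\Phi$ a local $\mathcal{D}^{r-1}$-diffeomorphism there. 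Using Proposition \ref{prop:definable_choice} I would extract a positive definable function $r_1 : M \to F$ such that $\Phi$ is injective on each set
$$V_x = \{(y,v) \in NM : \|y-x\| < r_1(x),\ \|v\| < r_1(x)\}.$$

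Next I would quantify the global injectivity radius by setting
$$\varepsilon_0(x) = \inf\{\max(\|v\|,\|w\|) : (x,v),(y,w) \in NM,\ (x,v) \neq (y,w),\ x+v = y+w\}.$$
By the symmetric role of $x$ and $y$ in the defining condition, $\max(\varepsilon_0(x),\varepsilon_0(y)) \leq \max(\|v\|,\|w\|)$ for any witnessing pair, which is exactly the inequality needed later for injectivity. To verify $\varepsilon_0(x_0) > 0$, suppose otherwise; by the definable curve selection lemma (Proposition \ref{prop:curve_selection}) applied to the definable set of witnessing pairs, there is a definable curve $t \mapsto ((x_0,v(t)),(y(t),w(t)))$ with $\max(\|v(t)\|,\|w(t)\|) \to 0$ and $y(t) = x_0 + v(t) - w(t) \to x_0$, so for small $t$ both pairs lie in $V_{x_0}$, contradicting injectivity of $\Phi|_{V_{x_0}}$. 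An argument patterned on Proposition \ref{prop:dist} then shows $\varepsilon_0$ is (lower semi)continuous.

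The remaining and, I expect, principal obstacle is producing a positive $\mathcal{D}^r$ function $\varepsilon : M \to F$ with $\varepsilon < \varepsilon_0$ everywhere. My intended route is to decompose $M$ into special $\mathcal{C}^r$ pieces via Theorem \ref{thm:tubular_decom} and, on each piece, write down a $\mathcal{D}^r$ minorant by combining the local graph structure with $\mathcal{D}^r$ cutoffs produced from Theorem \ref{thm:zeroset} that separate the pieces; the bookkeeping that keeps the strict inequality $\varepsilon < \varepsilon_0$ intact across the interfaces is the delicate part. With $\varepsilon$ in hand, set $U = \{(x,v) \in NM : \|v\| < \varepsilon(x)\}$. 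If $\Phi(x,v) = \Phi(y,w)$ with $(x,v),(y,w) \in U$ distinct, then
$$\max(\|v\|,\|w\|) \geq \max(\varepsilon_0(x),\varepsilon_0(y)) > \max(\varepsilon(x),\varepsilon(y)) > \max(\|v\|,\|w\|),$$
a contradiction; hence $\Phi|_U$ is injective. Combined with the local $\mathcal{D}^{r-1}$-diffeomorphism property, this exhibits $\Phi|_U$ as a $\mathcal{D}^{r-1}$-diffeomorphism onto the definable open set $\Omega = \Phi(U)$, which contains $M$ as required.
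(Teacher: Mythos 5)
Your overall strategy is the same as the paper's (which follows the classical o-minimal argument of Coste, Lemma 6.15): take $\Phi(x,v)=x+v$ on $NM$, use the definable inverse function theorem along the zero section, define a definable ``injectivity radius'' function, show it is positive and locally bounded below via curve selection, and then shrink to a $\mathcal D^r$ radius function $\varepsilon$. The local arguments and the final injectivity chain with $\varepsilon_0$ are fine. But the proof has a genuine gap at exactly the point you yourself flag as ``the principal obstacle'': you never actually produce a positive $\mathcal D^r$ function $\varepsilon<\varepsilon_0$ on $M$. Your proposed route --- decomposing $M$ into special $\mathcal C^r$ pieces via Theorem \ref{thm:tubular_decom} and gluing local minorants with cutoffs while ``keeping the strict inequality intact across the interfaces'' --- is precisely the bookkeeping you concede you have not done, and it is not clear it can be done this way: the pieces of such a decomposition need not be closed in $M$, so a minorant built piecewise can degenerate to $0$ along the frontier of a piece, destroying positivity of the glued function. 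In a locally o-minimal (as opposed to o-minimal) structure this step is the entire technical content of the theorem and cannot be waved through.

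The paper closes this gap with Lemma \ref{lem:aaa}, whose proof is much simpler than your sketch and avoids any decomposition: since $M$ is \emph{closed}, the sets $M_u=\{x\in M\;|\;\|x\|^2\le u\}$ are closed and bounded, so $\mu(u)=\inf\{\psi(x)\;|\;x\in M_u\}$ is well defined, is positive by definable compactness (Proposition \ref{prop:def_compact} applied to the filtered collection $Z_t=\{x\in M_s\;|\;\psi(x)\le t\}$, using that $\psi$ is locally bounded below by positive constants), is nonincreasing, and is $\mathcal C^r$ near $+\infty$ by Propositions \ref{prop:pre}(8) and \ref{prop:cr_pre}; one then modifies $\mu$ near the origin by a cutoff and sets $\varepsilon(x)=\tfrac12\mu_1(\|x\|^2)$. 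Note this is where the hypothesis that $M$ is closed in $F^n$ enters; your sketch never uses closedness. To repair your proof, establish (as you indicate) that $\varepsilon_0$ is locally bounded below by positive constants, and then invoke Lemma \ref{lem:aaa} directly in place of your decomposition argument.
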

\begin{proof}
We can prove it in the same manner as \cite[Lemma 6.15]{Coste} using Lemma \ref{lem:aaa} described below in place of \cite[Lemma 6.12]{Coste}. 
\end{proof}

We need the following definition and proposition in order to demonstrate Lemma \ref{lem:aaa}.
\begin{definition}
For a set $X$, a family $\mathcal F$ of subsets of $X$ is called a \textit{filtered collection} if, for any $B_1, B_2 \in \mathcal F$, there exists $B_3 \in \mathcal F$ with $B_3 \subseteq B_1 \cap B_2$. 

Consider an expansion of a dense linear order without endpoints $\mathcal F=(F;<,\ldots)$.
Let $X$ and $T$ be definable subsets.
The parameterized family $\{S_t\}_{t \in T}$ of definable subsets of $X$ is called \textit{definable} if the union $\bigcup_{t \in T} \{t\} \times S_t$ is definable in $T \times X$.

A parameterized family $\{S_t\}_{t \in T}$ of definable subsets of $X$ is a \textit{definable filtered collection} if it is simultaneously definable and a filtered collection.
\end{definition}

\begin{proposition}\label{prop:def_compact}
Consider a definably complete expansion of a dense linear order without endpoints $\mathcal F=(F;<,\ldots)$.
Let $X$ be a closed, bounded and definable set.
Every definable filtered collection of nonempty definable closed subsets of $X$ has a nonempty intersection.
\end{proposition}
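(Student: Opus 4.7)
My plan is to prove the lemma by induction on $n$, where $X \subseteq F^n$ is the given closed and bounded definable set. For the base case $n = 1$, I use that any nonempty closed bounded $S \subseteq F$ contains its infimum: definable completeness provides $\inf S \in F$, and the order topology forces $\inf S \in \overline{S} = S$. So I set $b(t) := \inf S_t \in S_t$, obtain a bounded definable function $b : T \to F$, and form $\beta := \sup_{t \in T} b(t)$, which exists by definable completeness. Fixing $t_0 \in T$, the filtered property makes $T_0 := \{t' \in T : S_{t'} \subseteq S_{t_0}\}$ cofinal in $T$ (for any $t$, a $t'$ with $S_{t'} \subseteq S_t \cap S_{t_0}$ lies in $T_0$ and satisfies $b(t') \geq b(t)$), whence $\sup_{t' \in T_0} b(t') = \beta$ while $\{b(t') : t' \in T_0\} \subseteq S_{t_0}$. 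Every open interval about $\beta$ meets this subset; since $S_{t_0}$ is closed, $\beta \in S_{t_0}$, and as $t_0$ was arbitrary, $\beta \in \bigcap_t S_t$.

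For the induction step, let $\pi : F^n \to F^{n-1}$ drop the last coordinate and $\pi_n : F^n \to F$ pick it out. The family $\{\overline{\pi(S_t)}\}_{t \in T}$ consists of nonempty closed subsets of the closed bounded set $\overline{\pi(X)} \subseteq F^{n-1}$; closures are definable via the order topology and inclusions propagate through images and closures, so this family is definable and filtered. The induction hypothesis yields a point $y \in \bigcap_t \overline{\pi(S_t)}$. I then reindex by pairs $(t, U)$, where $U$ ranges over the (definable) space of open boxes in $F^{n-1}$ containing $y$, and set $Z_{t,U} := \pi_n(\pi^{-1}(U) \cap S_t)$. Since $y \in \overline{\pi(S_t)}$ forces $U \cap \pi(S_t) \neq \emptyset$, each $Z_{t,U}$ is nonempty and bounded, and $\{\overline{Z_{t,U}}\}$ is a definable filtered family of nonempty closed subsets of $\overline{\pi_n(X)} \subseteq F$ (filteredness via a $t_3$ from the original filter together with $U_3 := U_1 \cap U_2$). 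The base case yields $z \in \bigcap_{(t,U)} \overline{Z_{t,U}}$.

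I then verify $(y, z) \in S_t$ for every $t$: for any basic open neighborhood $U \times V$ of $(y, z)$, the membership $z \in \overline{Z_{t,U}}$ supplies some $v \in V \cap Z_{t,U}$ coming from $(u, v) \in S_t$ with $u \in U$, so $(U \times V) \cap S_t \neq \emptyset$; since $S_t$ is closed, $(y, z) \in S_t$. This completes the induction.

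The main obstacle is the induction step: projections of closed bounded definable sets need not be closed in this generality, so the argument is forced to pass through closures at each stage and to verify that definability and filteredness survive these operations. The ingredients used are only definable completeness and the attainment of infima for closed bounded subsets of $F$; no curve selection, definable choice, or group operation is required, which is consistent with the weak hypotheses of the statement.
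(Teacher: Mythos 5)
Your proof is correct and follows essentially the same route as the argument the paper delegates to its reference (Johnson, Section 8.4): induction on the ambient dimension, with the base case handled by attained infima of closed bounded subsets of $F$ plus a supremum over the index set, and the induction step by projecting, passing to closures, and refining the family by box neighborhoods of the limit point obtained from the inductive hypothesis. The definability and filteredness checks you carry out (uniform definability of closures and of the box-indexed family, cofinality of $T_0$, nonemptiness of each $Z_{t,U}$) are exactly the points that need verifying, and they all go through, so the argument is complete.
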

\begin{proof}
The literally same proof as that for o-minimal structures in \cite[Section 8.4]{J} works. 
\end{proof}

\begin{lemma}\label{lem:aaa}
Consider a definably complete locally o-minimal expansion of an ordered field $\mathcal F=(F,<,+,0,\cdot,1,\ldots)$.
Let $r$ be a nonnegative integer and $M$ be a closed $\mathcal D^r$ submanifold of $F^n$.
Consider a positive definable function $\psi:M \rightarrow F$ which is locally bounded from below by positive constants.
Then there exists a positive $\mathcal D^r$ function $\varepsilon:M \rightarrow F$ such that $\varepsilon < \psi$ on $M$.
\end{lemma}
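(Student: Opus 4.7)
I work in two stages. First, I replace $\psi$ by a continuous definable lower bound
\[
\psi_1(x) := \sup\bigl\{t \in (0,1] : \psi(y) \geq t \text{ for all } y \in M \text{ with } \|y - x\| \leq t\bigr\}.
\]
The local positive-lower-bound hypothesis makes the defining set nonempty, so $\psi_1(x) > 0$; taking $y = x$ in the defining clause gives $\psi_1(x) \leq \psi(x)$; and the inclusion $\{y : \|y-x'\| \leq t-\|x-x'\|\} \subseteq \{y : \|y-x\| \leq t\}$, together with its symmetric counterpart, shows that $\psi_1$ is $1$-Lipschitz, hence continuous. Replacing $\psi$ with $\psi_1$, I may henceforth assume $\psi$ itself is a bounded, positive, continuous definable function on $M$.

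Second, I apply Theorem \ref{thm:tubular_decom} to $F^n$ partitioning $\{M\}$ and satisfying the frontier condition, obtaining $\{(X_j,\pi_j,T_j,\eta_j,\rho_j)\}_{j=1}^N$; set $J := \{j : X_j \subseteq M\}$ and $d_j := \dim X_j$. For each $j \in J$, Proposition \ref{prop:dist} applied to the $\pi_j$-special manifold $X_j$ with the positive continuous function $\psi|_{X_j}$ produces a positive continuous definable function
\[
\alpha_j(t) := \inf\{\psi(x) : x \in X_j \cap \pi_j^{-1}(t)\}
\]
on the definable open set $\pi_j(X_j) \subseteq F^{d_j}$. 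Hence, if I can exhibit a positive $\mathcal D^r$ function $\beta_j$ on $\pi_j(X_j)$ with $\beta_j < \alpha_j$, then $\varepsilon_j := \beta_j \circ \pi_j$ is a positive $\mathcal D^r$ function on $X_j$ strictly below $\psi|_{X_j}$.

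Two tasks remain: (i)~smoothly dominating a positive continuous definable function on a definable open set $U \subseteq F^d$ from below by a $\mathcal D^r$ function, and (ii)~combining the resulting $\varepsilon_j$'s across strata into a single $\mathcal D^r$ function on $M$. For (i), I apply Theorem \ref{thm:zeroset} to the closed complement $F^d \setminus U$ to obtain a nonnegative $\mathcal D^r$ function $h$ on $F^d$ vanishing exactly on this complement, and use Proposition \ref{prop:pre}(8) to compare the decay of $\alpha_j$ with suitable powers of $h$ and of $1/(1+\|t\|^2)$, producing $\beta_j$ as an appropriate product of a small constant, a power of $h$, and such a decay factor. For (ii), the frontier condition together with the global $\mathcal D^r$-submanifold structure of $M$ permits an inductive modification: the $\varepsilon_k$'s on higher-dimensional strata can be shrunk near their boundaries so that the combined function extends $\mathcal D^r$-smoothly across lower-dimensional strata, using the retractions $\rho_k$ from the tubular decomposition to transfer information between strata.

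The main obstacle is task (i): one must establish a \L ojasiewicz-type comparison between $\alpha_j$, which may degenerate both near $\partial U$ and at infinity, and the concrete $\mathcal D^r$ decay rates available from Theorem \ref{thm:zeroset}. In the locally o-minimal setting this estimate is not automatic; it is obtained by an iterated application of the one-variable asymptotic dichotomy of Proposition \ref{prop:pre}(8) to the definable sets of admissible exponents, and this is the most delicate step of the argument.
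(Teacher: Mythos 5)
Your opening reduction (replacing $\psi$ by the $1$-Lipschitz definable minorant $\psi_1$) is correct, but the remainder of the argument has two genuine gaps, and both occur exactly at the steps you yourself flag as delicate without resolving them. First, task (i) --- producing a positive $\mathcal D^r$ function $\beta_j<\alpha_j$ on the definable \emph{open} set $\pi_j(X_j)\subseteq F^{d_j}$ --- is not a simplification of the lemma but a strictly harder problem: the hypothesis that $M$ is \emph{closed} in $F^n$ is precisely what removes boundary behaviour, whereas $\alpha_j$ may decay to $0$ along all of $\partial(\pi_j(X_j))$, and you must control that decay uniformly. Proposition \ref{prop:pre}(8) only describes the germ at $+\infty$ of a one-variable definable subset of $F$; it says nothing about the rate at which $\alpha_j$ degenerates near a $(d_j-1)$-dimensional boundary, and no \L ojasiewicz-type inequality is available here: a comparison $\alpha_j\geq c\,h^N$ requires an integer exponent $N$ (non-integer powers of $h$ are not definable in an arbitrary such structure), and the ``definable set of admissible exponents'' you propose to analyse is not a definable subset of $F$ because $\mathbb Z$ is not definable. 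Second, task (ii) is also unproved: a function assembled stratum by stratum from the $\varepsilon_j$ is not even continuous across the frontier of a stratum unless the pieces match there, and to be $\mathcal C^r$ it would have to match to order $r$ along every lower-dimensional stratum; nothing in your construction arranges this, and replacing the pieces by a minimum destroys differentiability.

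The paper avoids both problems by exploiting closedness of $M$ directly and reducing to a single variable at infinity: set $M_u=\{x\in M\;|\;\|x\|^2\leq u\}$, which is closed and bounded, and $\mu(u)=\inf\{\psi(x)\;|\;x\in M_u\}$. Positivity of $\mu$ follows from Proposition \ref{prop:def_compact} applied to the definable filtered collection $Z_t=\{x\in M_u\;|\;\psi(x)\leq t\}$ (these sets are closed because $\psi$ is locally bounded below by positive constants). The one-variable nonincreasing function $\mu$ is $\mathcal D^r$ on some tail $(u_1,\infty)$ by Propositions \ref{prop:pre}(1),(8) and \ref{prop:cr_pre}; one then interpolates $\mu$ with the constant $\mu(u_2+1)$ on the bounded range via a $\mathcal D^r$ cutoff $\theta$ and sets $\varepsilon(x)=\frac{1}{2}\mu_1(\|x\|^2)$, which is $\mathcal D^r$ because $x\mapsto\|x\|^2$ is. No stratification, no gluing, and no boundary estimates are needed. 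If you wish to keep your strategy, you would essentially have to prove task (i) first --- and the natural way to do that is the paper's one-variable argument applied to a closed copy of the open set, which makes the detour through Theorem \ref{thm:tubular_decom} and Proposition \ref{prop:dist} unnecessary.
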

\begin{proof}
For any positive $u \in F$, we set $M_u=\{x \in M\;|\; \| x  \|^2 \leq u\}$.
We can take $u_0>0$ so that $M_{u_0} \neq \emptyset$.
Consider the map $\mu:[u_0,\infty) \rightarrow F$ given by $\mu(u)=\inf\{\psi(x)\;|\; x \in M_u\}$.
The set $M_u$ is closed and bounded.
The map $\mu$ is well-defined because $\mathcal F$ is definably complete.
The map $\mu$ is definable and nonincreasing.
 We show that $\mu$ is always positive.
 Assume that $\mu(s)=0$ for some $s>u_0$ for contradiction.
 For any $t>0$, the set $Z_t=\{x \in M_s\;|\; \psi(x) \leq t\}$ is a definable closed set because $\psi$ is locally bounded from below by positive constants.
 It is nonempty by the assumption that $\mu(s)=0$.
 The family $\{Z_t\}_{t>0}$ is the definable filtered collection of nonempty definable closed subsets of $M_s$.
We can take a point $x \in \bigcap_{t>0} Z_t$ by Proposition \ref{prop:def_compact}.
We have $\psi(x) \leq 0$.
It is a contradiction to the assumption that $\psi>0$.

By Proposition \ref{prop:pre}(1),(8) and Proposition \ref{prop:cr_pre}, there exists $u_1>u_0$ such that $\mu$ is $\mathcal D^r$ on $(u_1,\infty)$.
Take $u_2 \in F$ with $u_2>u_1$.
Choose a $\mathcal D^r$ function $\theta:F \rightarrow F$ such that $\theta=0$ on $(-\infty,u_2]$, $\theta$ increases from $0$ to $1$ on $[u_2,u_2+1]$ and $\theta=1$ on $[u_2+1,\infty)$.
The function $\mu_1:F \rightarrow F$ given by $\mu_1(u)=\theta(u)\mu(u)+(1-\theta(u))\mu(u_2+1)$ is a $\mathcal D^r$ map and the map $\varepsilon: M \rightarrow F$ given by $\varepsilon(x)=\frac{1}{2}\mu_1(\|x\|^2)$ satisfies the requirement.
We prove it.

Fix $x \in M$ and put $u=\|x\|^2$.
Note that $\mu$ is a decreasing function defined on $[u_0, \infty)$ and $\mu(u) \leq \psi(x)$.
When $u \leq u_2$, we have $\varepsilon(x)=\frac{1}{2}\mu(u_2+1) \leq \frac{1}{2}\mu(u) \leq \frac{1}{2}\psi(x) <\psi(x)$ because $\mu$ is decreasing.
When $u_2 < u \leq u_2+1$, we have $\varepsilon(x)=\frac{1}{2}\{\theta(u)\mu(u)+(1-\theta(u))\mu(u_2+1)\} \leq \frac{1}{2}\{\theta(u)\mu(u)+(1-\theta(u))\mu(u)\}  =\frac{1}{2}\mu(u) \leq \frac{1}{2}\psi(x) <\psi(x)$ for the same reason.
Finally, if $u>u_2+1$, we have $\varepsilon(x)=\frac{1}{2}\mu(u)\leq \frac{1}{2}\psi(x) <\psi(x)$.
\end{proof}

\subsection{Approximation of definable function}

Theorem \ref{thm:tubular_decom} is also used to construct a $\mathcal D^{r+1}$ approximation of a $\mathcal D^r$ map. 
The following is a key lemma\footnote{We prove the lemma in the same manner as \cite[Theorem 1.5]{E}, but our assertion seems to be a weaker than \cite[Theorem 1.5]{E}. We found gaps in the proof of \cite[Theorem 1.5]{E}, and the current lemma is the best of what we can prove avoiding the found gaps.}.

\begin{lemma}\label{lem:appro1}
Consider a definably complete locally o-minimal expansion of an ordered field $\mathcal F=(F,<,+,0,\cdot,1,\ldots)$.
Let $r$ be a nonnegative integer and $(X,\pi,T,\eta,\rho)$ be a special $\mathcal C^{r+1}$ submanifold in $F^n$ of dimension $d$ with a tubular neighborhood, where $\pi$ is the coordinate projection onto the first $d$-coordinates.
Set $U=\pi(X)$.
Let $f:T \rightarrow F$ be a $\mathcal D^r$ function which is of class $\mathcal C^{r+1}$ off $X$.
Assume that the restriction $f|_X$ of $f$ to $X$ is of class $\mathcal C^{r+1}$.
Let $\delta:T \rightarrow F$ be a positive definable continuous function.
Then there exists a $\mathcal D^{r+1}$ function $\widetilde{f}:T \rightarrow F$ such that $$|f-\widetilde{f}|<\delta$$ on $T$ and $$ \left|\dfrac{\partial^{|a|+|b|}}{\partial x^a \partial y^b}(f(x,y)-\widetilde{f}(x,y))\right| \to 0$$ as $(x,y)$ approaches to $X$ for all sequences of nonnegative integers $a$ and $b$ with $0<|a| + |b| \leq r$.
Furthermore, we may assume that $\widetilde{f}$ coincides with $f$ outside a small neighborhood of $X$ in $T$.
\end{lemma}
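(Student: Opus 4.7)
The plan is to modify $f$ only in a thin tubular neighborhood of $X$ by replacing it with a Taylor-polynomial-like $\mathcal D^{r+1}$ function $P$ matching the $r$-jet of $f$ along $X$, and then blending $P$ with $f$ through a $\mathcal D^{r+1}$ cutoff supported inside this neighborhood. Working one sheet at a time (the sheets over each $u\in U$ are separated by the disjointness of the balls $\mathcal B_{n-d}(x,\eta(u))$ built into the tubular neighborhood data), I reduce to the case in which $X$ is the graph of a single $\mathcal D^{r+1}$ map $\xi\colon U\to F^{n-d}$, and introduce normal coordinates $(u,t)$ with $t=y-\xi(u)$, $\|t\|<\eta(u)$, so that $X$ corresponds to $\{t=0\}$.

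For each multi-index $b$ with $|b|\le r$, set $a_b(u)=(\partial_y^b f)(u,\xi(u))$. By hypothesis $a_0=f|_X$ is $\mathcal D^{r+1}$, while for $|b|\ge 1$ the function $a_b$ is only $\mathcal D^{r-|b|}$ a priori. Using Proposition \ref{prop:cr_pre} applied to $a_b$, it is already $\mathcal D^{r+1}$ on a definable open dense subset of $U$ whose complement has dimension strictly less than $d$. Combining this with Theorem \ref{thm:tubular_decom} applied to the low-dimensional bad locus, I produce a $\mathcal D^{r+1}$ definable function $\widetilde a_b\colon U\to F$ that agrees with $a_b$ on that dense set, arguing by induction on $d=\dim X$ (the base case $d=0$ is immediate because $U$ is then discrete). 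I then set
\[
P(u,y)=\sum_{|b|\le r}\frac{\widetilde a_b(u)}{b!}(y-\xi(u))^b,
\]
which is $\mathcal D^{r+1}$ on $T$ since $\xi$ and each $\widetilde a_b$ are; by construction its $r$-jet along $X$ matches that of $f$ on the dense part of $X$, and asymptotically elsewhere.

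Next I choose a positive $\mathcal D^{r+1}$ function $\varepsilon\colon U\to F$ with $\varepsilon<\eta/2$, produced via Lemma \ref{lem:aaa}, small enough that $|P-f|<\delta$ and the derivative estimates needed below hold on $\{\|t\|<\varepsilon(u)\}$. Build a $\mathcal D^{r+1}$ cutoff $\lambda\colon T\to F$ with $\lambda\equiv 1$ on $\{\|t\|<\varepsilon(u)/2\}$ and $\lambda\equiv 0$ outside $\{\|t\|<\varepsilon(u)\}$ by composing a polynomial transition function with $\|t\|^2/\varepsilon(u)^2$, and put
\[
\widetilde f=\lambda P+(1-\lambda)f.
\]
Then $\widetilde f$ equals $P$ near $X$, equals $f$ outside the $\varepsilon$-tube, and is a $\mathcal D^{r+1}$ blend in the transition region (where $f$ is already $\mathcal D^{r+1}$), hence is $\mathcal D^{r+1}$ throughout $T$. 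The bound $|\widetilde f-f|=|\lambda(P-f)|\le|P-f|<\delta$ is immediate, and expanding $\partial^\alpha(\widetilde f-f)=\partial^\alpha(\lambda(P-f))$ via the Leibniz rule, together with the vanishing of the derivatives of $P-f$ up to order $r$ at $X$ (from the $r$-jet matching), yields the required asymptotic decay at $X$. The outside-a-small-neighborhood property holds because $\widetilde f=f$ on $\{\|t\|\ge\varepsilon(u)\}$.

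The hardest step is the construction of the $\mathcal D^{r+1}$ approximations $\widetilde a_b$ of the functions $a_b$ for $|b|\ge 1$. This is precisely the difficulty flagged in the footnote: Escribano's [E, Theorem 1.5] asserts more, and the gap the authors mention lives at exactly this smoothing step. The inductive argument over the dimension filtration produced by Theorem \ref{thm:tubular_decom}, guided by Proposition \ref{prop:cr_pre}, is what rescues the present (weaker) form of the conclusion.
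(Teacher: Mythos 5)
Your overall skeleton --- a fiberwise Taylor polynomial $P$ along $X$, a $\mathcal D^{r+1}$ cutoff supported in an $\varepsilon$-tube with $\varepsilon$ supplied by Lemma \ref{lem:aaa}, and a Leibniz expansion to obtain the derivative decay --- is the same as the paper's. The fatal deviation is your treatment of the coefficients $a_b(u)=(\partial_y^b f)(u,\xi(u))$ for $1\le|b|\le r$. You propose to produce a $\mathcal D^{r+1}$ function $\widetilde a_b$ agreeing with $a_b$ on a definable dense open subset of $U$. But $a_b$ is of class $\mathcal D^{r-|b|}$ with $r-|b|\ge 0$, hence continuous on $U$, so any continuous function agreeing with it on a dense set coincides with it everywhere; your construction therefore either returns $a_b$ itself (gaining no regularity) or yields a $\widetilde a_b$ that genuinely differs from $a_b$ at some points of the bad locus. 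In the latter case the $r$-jet of $P$ fails to match that of $f$ at the points of $X$ lying over the bad locus, and the conclusions of the lemma break there: $Q=f-P$ no longer vanishes on all of $X$, so no shrinking of the tube forces $|Q|<\delta$ near those points (this smallness is exactly what the paper's separate Claim establishes, using $Q|_X=0$), and the required decay of $\partial^{a,b}(f-\widetilde f)$ as $(x,y)$ approaches those points of $X$ fails. ``Matches asymptotically elsewhere'' cannot be upgraded to the stated conclusion, which is demanded at every point of $X$.

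The paper does not smooth the coefficients at all. It keeps the true normal derivatives and sets $P(x,y)=\sum_{|b|\le r}\frac{1}{b!}\,\partial_y^{b}f(x,\rho(x,y))\,(y-\rho(x,y))^b$, arguing that $P$ is already $\mathcal D^{r+1}$ because $\rho$ is a $\mathcal D^{r+1}$ retraction that is locally constant in $y$, $f$ is of class $\mathcal C^{r+1}$ off $X$, and $f|_X$ is of class $\mathcal C^{r+1}$. These strengthened hypotheses (relative to Escribano's Theorem 1.5) are precisely how the authors sidestep the gap mentioned in their footnote; it is not done by an induction that smooths the $a_b$ over a dimension filtration, and your closing paragraph misattributes the difficulty. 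Two further points: the single-graph reduction should be phrased through $\rho$, since a special submanifold can have infinitely many sheets over $U$ and the construction must be definable across all of them simultaneously; and the choice of $\varepsilon$ making $|P-f|<\delta$ on the tube is not automatic for an arbitrary positive continuous $\delta$ --- it is the content of the paper's Claim, which combines a local lower-bound argument with Lemma \ref{lem:aaa}.
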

\begin{proof}
There is noting to prove when $d=n$.
We assume that $d<n$.
By Taylor's theorem, we have 
\begin{align*}
f(x,y) &= \displaystyle\sum_{|b|<r}\dfrac{1}{b!}\dfrac{\partial^{|b|}f(x,\rho(x,y))}{\partial y^b}(y-\rho(x,y))^b\\
&\qquad+\sum_{|b|=r}\dfrac{1}{b!}\dfrac{\partial^{|b|}f(x,\xi y+(1-\xi)\rho(x,y))}{\partial y^b}(y-\rho(x,y))^b
\end{align*}
for a suitable $0<\xi<1$ when $(x,y) \in T$.
Set $$P(x,y)=\displaystyle\sum_{|b| \leq r}\dfrac{1}{b!}\dfrac{\partial^{|b|}f(x,\rho(x,y))}{\partial y^b}(y-\rho(x,y))^b.$$
Recall that the $\mathcal D^{r+1}$ retraction $\rho(x,y)$ is locally constant with respect to the variables $y$ when $x$ is fixed. 
We assumed that the restriction of $f$ to $X$ is of class $\mathcal C^{r+1}$.
Therefore, the function $P(x,y)$ is a $\mathcal D^{r+1}$ function.
We may assume that $\eta(x) \to 0$ when $x$ approaches to a point in the boundary of $U$.
In fact, there is a definable nonnegative continuous function $d:F^n \to F$ whose zero set is the boundary $\partial U$ of $U$ because $\partial U$ is closed.
We have only consider $\min\{\eta,d\}$ instead of $\eta$.
Take a sufficiently small positive $\mathcal D^{r+1}$ function $\varepsilon: U \rightarrow F$ so that $\varepsilon(x)<\eta(x)$.
Note that $\varepsilon(x) \to 0$ as $x$ approaches to the boundary of $U$.
We can take such a function thanks to Lemma \ref{lem:aaa}.
Let $\mu:F \rightarrow F$ be a $\mathcal D^{r+1}$ function such that it is constantly one in a neighborhood of $0$ and constantly zero outside the interval $[-1,1]$. 
Set $$T_{\varepsilon}=\{(x,y) \in T\;|\; \|y-\rho(x,y)\|<\varepsilon(x)\}$$ and $$\lambda(x,y)=\mu\left(\dfrac{\|y-\rho(x,y)\|^2}{\varepsilon(x)^2}\right).$$
Consider the function $\widetilde{f}:T \rightarrow F$ given by
$$\widetilde{f}(x,y)=\lambda(x,y)P(x,y)+(1-\lambda(x,y))f(x,y).$$
It is well-defined because $T_{\varepsilon} \subseteq T$ and $\widetilde{f}(x,y)=f(x,y)$ outside of $T_{\varepsilon}$.
Since $\lambda$ is one in a neighborhood of $X$, we have $\widetilde{f}(x,y)=P(x,y)$ in a neighborhood of $X$.
Therefore, the function $\widetilde{f}$ is of class $\mathcal C^{r+1}$ everywhere in $T$.  
Since $\widetilde{f}=f$ outside of $T_{\varepsilon}$, we have only to demonstrate that $\widetilde{f}$ is an approximation of $f$ in $T_{\varepsilon}$.

We can choose $\varepsilon$ so that the absolute values of $|f-\widetilde{f}|$ is smaller than $\delta$.
We want to show this fact.
For that purpose, we need the following claim:
\medskip

\textbf{Claim.} Let $R:T \to F$ be a definable continuous map such that $R(x,y)=0$ for every $(x,y) \in X$.
Then there exists a $\mathcal D^r$ function $\varepsilon: U \to F$ such that $|R(x,y)|<\delta(x,y)$ whenever $x \in U$ and $|y-\rho(x,y)|<\varepsilon(x)$.
\begin{proof}[Proof of Claim]
Fix $x \in U$.
The definable set $X_x:=X \cap \pi^{-1}(x)$ is discrete and closed.
We consider the map $\sigma:X_x \times (0,1)\to F$ defined by 
\begin{align*}
\sigma(y,c) &:=\sup\{0<r<\varepsilon(x)/2\;|\; \forall y \ ((x,y) \in T_{\varepsilon} \text{ and }\|y-\rho(x,y)\| <r) \\
& \qquad \rightarrow (|R(x,y)|<c \cdot \delta(x,y))\}.
\end{align*}
Observe that the inequality $|R(x,y)|<c \cdot \delta(x,y)$ is equivalent to the inequality $|R(x,y)|/\delta(x,y)<c$ because $\delta>0$. 
We have $\sigma(y,c)>0$ because $R(x,\rho(x,y))=0$ and $R/\delta$ is continuous.
Consider the map $\tau:U \times (0,1) \to F$ given by $\tau(x,c)=\inf\{\sigma(y,c)\;|\; y \in X_x\}$.
We have $\tau(x,c)>0$ for every $x \in U$ and $0<c<1$ because the set $\{\sigma(y,c)\;|\; y \in X_x\}$ is closed and discrete by Proposition \ref{prop:pre}(1),(5).
We show that, for each $x_0 \in U$, there exists an open neighborhood $V_0$ such that $\tau(x,1/2) \geq \frac{1}{2}\tau(x_0,1/4)$ for each $x \in V_0$.

Set $W=\{(x,y) \in T\;|\; |R(x,y)| \geq \frac{1}{2} \delta(x,y)\} \cup (F^n \setminus T)$.
Observe that $W$ is a definable closed subset of $F^n$ because $R$ and $\delta$ are continuous, and $T$ is open.
We set $d_W(z):=\inf\{\|z-z'\|\;|\; z' \in W\}$ for each $z \in F^n$.
The definable function $d_W$ is continuous.
We define $\sigma':X_{x_0} \to F$ by $\sigma'(y)=\inf\{d_W(x_0,y')\;|\; \|y' - \rho(x_0,y)\| \leq \frac{1}{2}\tau(x_0,1/4)\}$.
We have $\sigma'(y)>0$ by \cite[Corollary(Max-min theorem)]{M} because the definable set $\{y' \in F^{n-m}\;|\; \|y' - \rho(x_0,y)\| \leq \frac{1}{2}\tau(x_0,1/4)\}$ is a definable closed and bounded set which has an empty intersection with $W$ and $d_W$ is continuous.
We consider $w:=\inf\{\sigma'(y)\;|\; y \in X_{x_0}\}$.
The value $w$ is positive for the same reason as $\tau(x,c)$ is positive.
Observe that the definable set $A:=\{(x,y) \in F^m \times F^{n-m}\;|\; \|x-x_0\| < w, \|y -\rho(x,y)\| \leq \frac{1}{2}\tau(x_0,1/4)\}$ has an empty intersection with $W$.
It means that the inequality $|R(x,y)|<\frac{1}{2}\eta(x,y)$ holds for each $(x,y) \in A$.
Set $V_0=\{x \in U\;|\; \|x-x_0\| <w\}$.
We have shown that $\tau(x,1/2) \geq \frac{1}{2}\tau(x_0,1/4)$ for each $x \in V_0$.
In particular, the function $\tau(x,1/2)$ is locally bounded from below by positive constants.
We can choose a $\mathcal D^{r+1}$ function $\varepsilon':U \to F$ such that $\varepsilon'(x)<\tau(x,1/2)$ by Lemma \ref{lem:aaa}.
We may assume that the inequality $|R(x,y)|<\delta(x,y)$ holds whenever $x \in U$ and $|y-\rho(x,y)|<\varepsilon(x)$.
\end{proof}

Set $Q(x,y)=f(x,y) - P(x,y)$.
The definable function $Q$ is of class $\mathcal D^r$, it is of class $\mathcal D^{r+1}$ off $X$ and its restriction to $X$ is of class $\mathcal D^{r+1}$  because $f$ is so and $P$ is of class $\mathcal D^{r+1}$. 
We can choose $\varepsilon$ so that $|Q|<\delta$ on $T_{\varepsilon}$ by Claim.
We have $|\widetilde{f}-f|=\lambda \cdot |Q| \leq |Q|<\delta$ on $T_{\varepsilon}$ because $0 \leq \delta \leq 1$.
We have $|\widetilde{f}-f|<\delta$ because $\widetilde{f}=f$ off $T_{\varepsilon}$.

We have to check that the derivatives of $f-\widetilde{f}$ is sufficiently small.
We have 
$$\dfrac{\partial^{|a|+|b|}(f-\widetilde{f})}{\partial x^{a} \partial y^{q}}=\displaystyle\sum_{p \leq a, q \leq b} \mathcal A_{p,q} \dfrac{\partial^{|p|+|q|}\lambda}{\partial x^{p} \partial y^{q}}\dfrac{\partial^{|a-p|+|b-q|}Q}{\partial x^{a-p} \partial y^{b-q}},$$
where $\mathcal A_{p,q}$ are constants.
Observe that $\dfrac{\partial^{|a-p|+|b-q|}Q}{\partial x^{a-p} \partial y^{b-q}}(x,y)=0$ if $(x,y) \in X$.
Note that $$\frac{\partial \rho(x,y)}{\partial y}=0$$ because $\rho(x,y)$ is locally constant with respect to $y$.
Observe that the derivatives of $\mu$ is bounded because they are locally constant outside of the definably compact set $[-1,1]$.
Using the above facts, we can find the upper bounds of $\dfrac{\partial^{|p|+|q|}\lambda}{\partial x^{p} \partial y^{q}}$ in the same manner as \cite[p.116-p.117]{E}, but it is technical and we omit the details.
Combining them, we can prove that $\dfrac{\partial^{|a|+|b|}(f-\widetilde{f})}{\partial x^{a} \partial y^{q}}(x,y) \to 0$ when $(x,y) \in T$ and it approaches to the boundary of $X$.
\end{proof}

Once Lemma \ref{lem:appro1} is proved, we can prove the following theorem in the same manner as \cite[Theorem 1.6]{E}.
\begin{theorem}\label{thm:appro}
Consider a definably complete locally o-minimal expansion of an ordered field $\mathcal F=(F,<,+,0,\cdot,1,\ldots)$.
Let $r$ be a nonnegative integer and $\delta:U \rightarrow F$ be a positive continuous definable function defined on a definable open subset $U$ of $F^n$.
Let $f:U \to F$ be a $\mathcal D^r$ function.
There exists a $\mathcal D^{r+1}$ approximation $\widetilde{f}:U \rightarrow F$ of $f$ such that $$|f-\widetilde{f}|<\delta.$$
\end{theorem}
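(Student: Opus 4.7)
The plan is to apply Lemma \ref{lem:appro1} iteratively over the strata of a carefully chosen decomposition of $F^n$. First I would invoke Theorem \ref{thm:tubular_decom} to obtain a decomposition $\{(X_i,\pi_i,T_i,\eta_i,\rho_i)\}_{i=1}^N$ of $F^n$ into special $\mathcal C^{r+1}$ submanifolds with tubular neighborhoods, partitioning the family $\{U, D^{r+1}(f)\}$ and satisfying the frontier condition. Here $D^{r+1}(f)\subseteq U$ is the set at which $f$ fails to be of class $\mathcal C^{r+1}$; by Proposition \ref{prop:cr_pre}, $\dim D^{r+1}(f)<n$. After refining the decomposition further in the manner of Proposition \ref{prop:multi1}, I may additionally assume that the restriction $f|_{X_i}$ is of class $\mathcal C^{r+1}$ on every stratum $X_i\subseteq U$. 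The top-dimensional strata inside $U$ then automatically lie in $U\setminus D^{r+1}(f)$, where $f$ is already $\mathcal C^{r+1}$, so these strata need no modification.

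Enumerating the remaining strata $X_{i_1},\ldots, X_{i_M}\subseteq D^{r+1}(f)\cap U$ by nondecreasing dimension, I build inductively a sequence $f_0=f,f_1,\ldots,f_M$ of $\mathcal D^r$ functions on $U$ with the following properties: (i) $f_k$ is of class $\mathcal C^{r+1}$ on a definable open neighborhood $\Omega_k$ of $\bigcup_{j\leq k}X_{i_j}$ in $U$; (ii) $f_k$ coincides with $f_{k-1}$ outside a shrunken tubular neighborhood $T_{i_k}'\subseteq T_{i_k}\cap U$ of $X_{i_k}$; and (iii) $|f_k-f_{k-1}|<\delta/2^k$. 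The transition from $f_{k-1}$ to $f_k$ is produced by applying Lemma \ref{lem:appro1} to $f_{k-1}$ on $T_{i_k}'$ with tolerance $\delta/2^k$. Setting $\widetilde f:=f_M$ then yields $|f-\widetilde f|\leq \sum_{k=1}^M \delta/2^k<\delta$, and $\widetilde f$ is of class $\mathcal D^{r+1}$ on all of $U$ because, by construction, every stratum of $U$ has a neighborhood on which $\widetilde f$ is of class $\mathcal C^{r+1}$.

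The principal obstacle is verifying at each inductive step that the hypotheses of Lemma \ref{lem:appro1} are preserved, namely that $f_{k-1}$ is of class $\mathcal C^{r+1}$ on $T_{i_k}'\setminus X_{i_k}$ and that $f_{k-1}|_{X_{i_k}}$ remains of class $\mathcal C^{r+1}$. By the frontier condition, any stratum contained in $\mycl(X_{i_k})\setminus X_{i_k}$ has strictly smaller dimension and was therefore already handled, so $f_{k-1}$ is $\mathcal C^{r+1}$ on a neighborhood of such strata; strata of dimension $\geq \dim X_{i_k}$ other than $X_{i_k}$ itself must be excluded from $T_{i_k}'$ by shrinking the tube function $\eta_{i_k}$ appropriately, which is possible because distinct equidimensional strata are separated and higher-dimensional strata in $T_{i_k}$ lie at positive distance from $X_{i_k}$. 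The derivative-decay clause of Lemma \ref{lem:appro1}, stating that the partials of $f_{k-1}-f_k$ up to order $r$ tend to zero as one approaches $X_{i_k}$, is exactly what ensures the modification glues with the previously stabilized derivatives on the lower-dimensional strata of $\mycl(X_{i_k})$, so that $f_k$ is genuinely $\mathcal D^{r+1}$ (and not merely $\mathcal D^r$) across the new boundary. Organizing the shrinkages of the tubes $T_{i_k}$ consistently with this derivative control is the main bookkeeping burden.
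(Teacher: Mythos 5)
Your overall architecture---decompose $F^n$ into special $\mathcal C^{r+1}$ submanifolds with tubular neighborhoods satisfying the frontier condition, refine so that the restriction of $f$ to each stratum contained in $U$ is of class $\mathcal C^{r+1}$, and then repeatedly invoke Lemma \ref{lem:appro1} with a summable sequence of tolerances---is the same as the paper's. The gap is in the order in which you process the strata and in the claim used to justify it. You enumerate the strata of $D^{r+1}(f)\cap U$ by \emph{nondecreasing} dimension and assert that the hypotheses of Lemma \ref{lem:appro1} can be arranged at each step because ``higher-dimensional strata in $T_{i_k}$ lie at positive distance from $X_{i_k}$.'' This is false in general: by the frontier condition together with Proposition \ref{prop:pre}(3), if a higher-dimensional stratum $X_j$ satisfies $\mycl(X_j)\cap X_{i_k}\neq\emptyset$, then $X_{i_k}\subseteq \partial X_j$, so $X_j$ accumulates on \emph{every} point of $X_{i_k}$ and cannot be removed from any tube around $X_{i_k}$ by shrinking $\eta_{i_k}$. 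This is the typical situation, e.g.\ when $D^{r+1}(f)$ is a closed set decomposed into an open top-dimensional stratum together with its lower-dimensional frontier: processing a frontier stratum first, every tube around it contains points of the top stratum where $f$ is not yet of class $\mathcal C^{r+1}$, so the hypothesis of Lemma \ref{lem:appro1} that the function be $\mathcal C^{r+1}$ off $X_{i_k}$ fails and the inductive step cannot be carried out. Even setting that aside, a later modification near a higher-dimensional stratum $X_{i_l}$ with $X_{i_j}\subseteq\partial X_{i_l}$ takes place on a region accumulating on the already-handled $X_{i_j}$, and the derivative decay provided by Lemma \ref{lem:appro1} is controlled only toward $X_{i_l}$, not toward its frontier, so your property (i) for $X_{i_j}$ would not survive the later steps.

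The paper avoids both problems by processing strata in \emph{decreasing} order of dimension: at stage $k$ the approximation $\widetilde f_{k-1}$ is already of class $\mathcal C^{r+1}$ off the skeleton $D_{k-1}$ of strata of dimension $\leq n-k$; a stratum $X$ of dimension $n-k$ can only be accumulated on by strata of strictly larger dimension, which have all been handled; the other strata of dimension exactly $n-k$ really are separated from $X$ and can be excluded from the tube; and the function $g_X$ obtained by gluing $\widetilde f_{k-1}$ with $f$ on $D_{k-1}$ is of class $\mathcal D^r$ on the tube precisely because the tolerance $\eta_{k-1}$ vanishes on $D_{k-1}$ and the derivatives of $f-\widetilde f_{k-1}$ decay toward $D_{k-1}$, two conditions carried along the induction. (Relatedly, your fixed tolerance $\delta/2^k$ should be replaced by a function vanishing on the not-yet-handled skeleton, as the paper's $\eta_k$ does, for this gluing to yield a $\mathcal D^r$ function.) With the order reversed, your induction does not close.
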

\begin{proof}
The counterpart of this theorem for o-minimal structures is \cite[Theorem 1.6]{E}.
In its proof, a stratification of the ambient space $F^n$ into cells is used.
It is unavailable in our setting.
Instead, we use a decomposition of $F^n$ into special $\mathcal C^{r+1}$ submanifolds with tubular neighborhoods satisfying the frontier condition given in Theorem \ref{thm:tubular_decom}. 
We can complete the proof using Lemma \ref{lem:appro1} in place of \cite[Theorem 1.5]{E} in the same manner as \cite[Theorem 1.6]{E}.
We give a proof here for readers' convenience.
\medskip

\textbf{Claim 1.}
There exists a decomposition $\{(X_i,\pi_i,T_i,\eta_i,\rho_i)\}_{i=1}^N$ of $F^n$ into special $\mathcal C^{r+1}$ submanifolds with tubular neighborhoods and satisfying the frontier condition such that the restriction of $f$ to $X_i$ is of class $\mathcal D^{r+1}$ whenever $X_i \subseteq U$.
\begin{proof}
	We show that, for every $0 \leq k \leq n$, there exists a decomposition $$\{(X_i,\pi_i,T_i,\eta_i,\rho_i)\}_{i=1}^N$$ of $F^n$ into special $\mathcal C^{r+1}$ submanifolds with tubular neighborhoods partitioning $\{U\}$  and satisfying the frontier condition such that the restriction of $f$ to $X_i$ is of class $\mathcal D^{r+1}$ whenever $\dim X_i \geq n-k$ and $X_i \subseteq U$.
	We prove it by induction on $k$.
	
	We first consider the case in which $k=0$.
	Let $D_0$ be the set of points at which $f$ is not of class $\mathcal D^{r+1}$.
	We have $\dim D_0<n$ by Proposition \ref{prop:cr_pre}.
	A decomposition of $F^n$ into special $\mathcal C^{r+1}$ submanifolds with tubular neighborhoods partitioning $\{D_0, U\}$ and  satisfying the frontier condition meets the requirement.
	
	We next consider the case $k>0$.
	By the induction hypothesis, there exists a decomposition $\{(X'_i,\pi'_i,T'_i,\eta'_i,\rho'_i)\}_{i=1}^{N'}$ of $F^n$ into special $\mathcal C^{r+1}$ submanifolds with tubular neighborhoods partitioning $\{U\}$ and satisfying the frontier condition such that the restriction of $f$ to $X'_i$ is of class $\mathcal D^{r+1}$ whenever $\dim X'_i > n-k$ and $X'_i \subseteq U$.
	We may assume that $X'_i \subseteq U$ if and only if $1 \leq i \leq p$, $\dim X'_i > n-k$ for $1 \leq i \leq l$, $\dim X'_i = n-k$ for $l<i \leq m$ and $\dim X'_i < n-k$ for $l<i \leq p$.
	Let $D_i \subseteq X'_i$ be the set of points at which the restriction of $f$ to $X'_i$ is not of  class $\mathcal D^{r+1}$ $l<i \leq m$.
	
	We can get a decomposition $\{(X''_i,\pi''_i,T''_i,\eta''_i,\rho''_i)\}_{i=1}^{N''}$  of $F^n$ into special $\mathcal C^{r+1}$ submanifolds with tubular neighborhoods partitioning $\{D_i,X'_i\}_{l<i \leq m} \cup \{X'_i\}_{i>m}$ and  satisfying the frontier condition.
	We assume that $X''_i \cap X'_j=\emptyset$ for every $1 \leq j \leq l$ if and only if $1 \leq i \leq l'$.
	The decomposition $\{(X'_i,\pi'_i,T'_i,\eta'_i,\rho'_i)\}_{1 \leq i \leq l} \cup \{(X''_i,\pi''_i,T''_i,\eta''_i,\rho''_i)\}_{i=1}^{l'}$ is a desired decomposition.
\end{proof}

Let $\{(X_i,\pi_i,T_i,\eta_i,\rho_i)\}_{i=1}^N$ be the decomposition given in Claim 1.
We next show the following claim:
\medskip

\textbf{Claim 2.} We may assume that $T_i \cap U=\emptyset$ if $X_i \subseteq U$.
\begin{proof}[Proof of Claim 2.]
	Consider the definable continuous map $\mathfrak c_i:X_i \to F$ given by $\mathfrak c_i(x)=\inf\{|x-y|\;|\; y \in F^n \setminus U\}$.
	It is a positive definable continuous function.
	Consider the definable function $\mathfrak  d_i:\pi_i(X_i) \to F$ given by $\mathfrak d_i(t)=\inf \{\mathfrak c_i(x)\;|\; x \in X \cap \pi^{-1}(t)\}$.
	This is positive and continuous by Proposition \ref{prop:dist}.
	Replace $\eta_i$ with $\min\{\eta_i,\mathfrak d_i\}$.
	Then, $T_i \cap U=\emptyset$ by the definition of special manifolds with tubular neighborhoods. 
\end{proof}

Let $D_k$ be the union of $X_i$'s satisfying the inequality $\dim X_i < n-k$ and the inclusion $X_i \subset U_i$.
Observe that $D_k$ is closed in $U$ by the frontier condition. 
We prove the following claim by induction on $k$.
The theorem immediately follows from the claim.
\medskip

\textbf{Claim 3.}
Let $0 \leq k \leq n$.
There exist a nonnegative definable continuous function $\eta_{k,\alpha}:F^n \to F$ and a $\mathcal D^{r+1}$ approximation $\widetilde{f}_k$ of the restriction of $f$ to $F^n \setminus D_k$ such that $\eta_{k}<\delta$, $\eta_{k} \equiv 0$ on $D_k$ and $\left|\dfrac{\partial^{|\alpha|}}{\partial x^{\alpha}}(f-\widetilde{f}_k)\right| \to 0$ as $(x,y)$ approaches to a point in $D_k$ for $0 \leq |\alpha| \leq r$.
\begin{proof}[Proof of Claim 3]
	We prove the claim by induction on $k$.
	We first consider the case in which $k=0$.
	Let $\widetilde{f}_0$ be the restriction of $f$ to $U \setminus D_0$.
	We define $\eta_{0}(x)=0$.
	
	We next consider the case in which $k>0$.
	There exist a nonnegative definable continuous function $\eta_{k-1,}:U \to F$ and a $\mathcal D^{r+1}$ approximation $\widetilde{f}_{k-1}$ of the restriction of $f$ to $U \setminus D_{k-1}$ such that $\eta_{k-1}<\delta$, $\eta_{k-1,\alpha} \equiv 0$ on $D_{k-1}$, 
	\begin{equation}
	|f-\widetilde{f}_{k-1}|< \eta_{k-1}\label{eq:aaa}
	\end{equation}
	on $U \setminus D_{k-1}$ and 
	\begin{equation}
	\left|\dfrac{\partial^{|\alpha|}}{\partial x^{\alpha}}(f-\widetilde{f}_{k-1})(x)\right| \to 0\label{eq:ccc}
	\end{equation}
	as $x$ approaches to a point in $D_{k-1}$ for $0 < |\alpha| \leq r$.
	 Take a nonnegative definable continuous function $\nu:U \to F$ whose zero set is $D_{k}$.
	 Set $\eta_{k}(x)=\max\{\eta_{k-1}(x),\frac{\nu(x)}{\nu(x)+1}\delta(x)\}$ for $x \in U$.
	 The definable function $\eta_k$ is nonnegative, continuous and satisfies the conditions $\eta_{k}<\delta$ and $\eta_{k} \equiv 0$ on $D_{k}$.
	
	Fix a special $\mathcal C^r$ submanifolds $(X,\pi,T,\eta,\rho)$ of dimension $n-k$ with tubular neighborhood such that it is a member of the decomposition $\{(X_i,\pi_i,T_i,\eta_i,\rho_i)\}_{i=1}^N$ and it is contained in $D_{k-1}$.
	We have $T \subseteq U$ by Claim 2.
	Shrinking $T$ if necessary, we may assume that $X_i \cap T=\emptyset$ for every $X_i$ of dimension $n-k$ with $X_i \neq X$.
	The restriction of $f$ to $X$ is of class $\mathcal D^{r+1}$ by Claim 1.  
	Consider the restriction of $\widetilde{f}_{k-1}$ to $T \setminus D_{k-1}$ and the restriction of $f$ to $X$.
	They are of class $\mathcal D^{r+1}$.
	Consider the definable function $g_X:T \to F$ given by $g_X(x)=\widetilde{f}_{k-1}(x)$ for $x \in T \setminus D_{k-1}$ and $g_X(x)=f(x)$ elsewhere.
	The conditions (\ref{eq:aaa}) and (\ref{eq:ccc}) imply that $g_X$ is of class $\mathcal D^r$.
	We have succeeded in constructing a $\mathcal D^r$ function $g_X:T \to F$ whose restriction to $X$ is of class $\mathcal D^{r+1}$.
	Apply Lemma \ref{lem:appro1} to $g_X$ and $\eta_{k}$.
	We can find a $\mathcal D^{r+1}$ function $\widetilde{g}_X:T \rightarrow F$ such that $$|\widetilde{g}_X -g |<\delta_k$$ on $T$ and $$\dfrac{\partial^{|a|}}{\partial x^a}(g_X-\widetilde{g}_X)(x) \to 0$$ as $x$ approaches to $X$ for all sequences of nonnegative integers $a$ with $0<|a|  \leq r$.
	Furthermore, $\widetilde{g}_X$ coincides with $\widetilde{f}_{k-1}$ outside of a small neighborhood of $X$ in $T$.
	Consider the definable function $\widetilde{f}_X:(U \setminus D_{k-1}) \cup X \to F$ defined by $\widetilde{g}_X(x)$ when $x \in T$ and $\widetilde{f}_{k-1}(x)$ elsewhere.
	It is a $\mathcal D^{r+1}$ approximation of the restriction of $f$ to $(U \setminus D_{k-1}) \cup X$.
	
	We have finitely many members $(X,\pi,T,\eta,\rho)$ of dimension $n-k$ with tubular neighborhood such belonging to $\{(X_i,\pi_i,T_i,\eta_i,\rho_i)\}_{i=1}^N$.
	We extended the domain of definition of an approximation from $U \setminus D_{k-1}$ to $(U \setminus D_{k-1}) \cup X$.
	Applying the above procedure finitely many times and extend the domain of definition of the approximation step by step, we can construct a definable approximation $\widetilde{f}_k$ of $f$ restricted to $U \setminus D_k$ satisfying the requirements.
\end{proof}
\end{proof}

Repeating the proofs of \cite{E} using Theorem \ref{thm:tub} and Theorem \ref{thm:appro}, we get the following results:
\begin{theorem}\label{thm:appro_nbd}
Consider a definably complete locally o-minimal expansion of an ordered field $\mathcal F=(F,<,+,0,\cdot,1,\ldots)$ and a $\mathcal D^r$ submanifold $X$ of $F^n$ for some positive integer $r$.
Then there exists a tubular $\mathcal D^r$ neighborhood of $X$; that is, there exist a definable open neighborhood $U$ of $X$ in $F^n$ and a $\mathcal D^r$ retraction $\rho:U \rightarrow X$. 
\end{theorem}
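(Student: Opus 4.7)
The plan is to trade one derivative via the approximation theorem. Concretely, we will produce a closed $\mathcal D^{r+1}$ submanifold $X^{\ast}$ of $F^n$ that is $\mathcal D^r$-diffeomorphic to $X$ by a definable map $\mathcal D^r$-close to the inclusion, apply Theorem \ref{thm:tub} at level $r+1$ to $X^{\ast}$ to obtain a $\mathcal D^r$ tubular neighborhood, and then transport the retraction back to $X$.

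First, by Lemma \ref{lem:closed_mani} we may assume $X$ is closed in $F^n$ (at the cost of replacing $n$ by $n+1$). Since Theorem \ref{thm:tub} with parameter $r$ only yields a $\mathcal D^{r-1}$ retraction onto $X$, we must upgrade the smoothness of $X$ itself. Locally, $X$ is the graph of a $\mathcal D^r$ map $f \colon V \to F^{n-d}$ for some open $V \subseteq F^d$, so Theorem \ref{thm:appro}, applied to each component of $f$, produces a $\mathcal D^{r+1}$ approximation $\widetilde f$ with $|f - \widetilde f|$ as small as we like; the graph of $\widetilde f$ is a $\mathcal D^{r+1}$ submanifold $\mathcal D^r$-diffeomorphic to the graph of $f$ by the map $(x,f(x)) \mapsto (x,\widetilde f(x))$. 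Patching these local approximations into a single closed $\mathcal D^{r+1}$ submanifold $X^{\ast}$, $\mathcal D^r$-diffeomorphic to $X$ via some $\Phi \colon X \to X^{\ast}$, is carried out as in \cite{E}, using a $\mathcal D^{r-1}$ tubular neighborhood of $X$ (from Theorem \ref{thm:tub}) to identify a normal direction along which to deform, and using the derivative-control clauses in Theorem \ref{thm:appro} to ensure that the deformation is a $\mathcal D^r$ diffeomorphism globally.

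Second, applying Theorem \ref{thm:tub} with parameter $r+1$ to the closed $\mathcal D^{r+1}$ submanifold $X^{\ast}$ yields a definable open neighborhood $\Omega^{\ast}$ of $X^{\ast}$ and a $\mathcal D^r$ retraction $\rho^{\ast} \colon \Omega^{\ast} \to X^{\ast}$. If $\Phi$ is sufficiently close to the inclusion, then $X \subseteq \Omega^{\ast}$ and $\rho^{\ast}|_X \colon X \to X^{\ast}$ is $\mathcal D^r$-close to $\Phi$, hence a $\mathcal D^r$ diffeomorphism; letting $\Psi \colon X^{\ast} \to X$ be its inverse, the composition $\rho := \Psi \circ \rho^{\ast}$ is a $\mathcal D^r$ retraction of an open neighborhood of $X$ onto $X$.

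The main obstacle is the globalization in the first step: Theorem \ref{thm:appro} is stated for functions on a single definable open set, whereas $X$ is only locally a graph. The gluing relies on the strong conclusion of Theorem \ref{thm:appro} that $|f-\widetilde f|$ and its lower-order derivatives can be made uniformly small on the whole domain, which is exactly what allows the local approximations to be combined into a single global $\mathcal D^{r+1}$ submanifold $\mathcal D^r$-close to $X$. Once that is in hand, everything else is formal.
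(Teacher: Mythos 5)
Your proposal is correct and follows essentially the same route as the paper, whose proof consists of invoking \cite[Theorem 1.8, Theorem 1.9]{E}: upgrade $X$ to a $\mathcal D^{r+1}$ submanifold $X^{\ast}$ that is $\mathcal D^r$-diffeomorphic and $\mathcal C^1$-close to $X$ via Theorem \ref{thm:appro}, apply Theorem \ref{thm:tub} to $X^{\ast}$, and transport the resulting $\mathcal D^r$ retraction back through the near-identity diffeomorphism $\rho^{\ast}|_X$. The gluing step you flag as the main obstacle is precisely the content of \cite[Theorem 1.8]{E}, to which the paper also defers without further detail.
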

\begin{proof}
The literally same proof as \cite[Theorem 1.8, Theorem 1.9]{E} demonstrates the theorem.
\end{proof}

We can get the following theorem:
\begin{theorem}\label{thm:appro2}
	Consider a definably complete locally o-minimal expansion of an ordered field $\mathcal F=(F,<,+,0,\cdot,1,\ldots)$ and two $\mathcal D^r$ submanifolds $X$ and $Y$.
	A $\mathcal D^{r-1}$ map $f:X \rightarrow Y$ admits a $\mathcal D^r$ approximation; that is, for any positive definable continuous function $\varepsilon$ on $X$, there exists a definable $\mathcal D^{r}$ map $\widetilde{f}:X \rightarrow Y$ such that $\|f-\widetilde{f}\|<\varepsilon$. 
\end{theorem}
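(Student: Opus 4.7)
The plan is to reduce the approximation of maps into $Y$ to the approximation of real-valued functions on an ambient open set via Theorem \ref{thm:appro}, then project back onto $Y$ using a tubular retraction. Write $X \subseteq F^n$ and $Y \subseteq F^m$. Applying Theorem \ref{thm:appro_nbd} to $X$ and to $Y$ separately, I obtain definable open neighborhoods $U_X \supseteq X$ in $F^n$ and $U_Y \supseteq Y$ in $F^m$, together with $\mathcal D^r$ retractions $\rho_X : U_X \to X$ and $\rho_Y : U_Y \to Y$. The composition $g := f \circ \rho_X : U_X \to F^m$ is then a $\mathcal D^{r-1}$ extension of $f$ defined on an open subset of $F^n$, which makes the ambient approximation theorem applicable.

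Next, for a positive definable continuous function $\tau: U_X \to F$ to be chosen, apply Theorem \ref{thm:appro} coordinatewise to each $g_i$ to obtain a $\mathcal D^r$ function $h_i: U_X \to F$ with $|g_i - h_i| < \tau$. Then $h = (h_1,\ldots,h_m)$ satisfies $\|g - h\| < \sqrt{m}\,\tau$ on $U_X$, and provided $h(X) \subseteq U_Y$, the map $\widetilde{f} := \rho_Y \circ h|_X : X \to Y$ is definable of class $\mathcal D^r$ and maps into $Y$.

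The remaining task is to choose $\tau$ small enough so that, on $X$, both $h(X) \subseteq U_Y$ and $\|\rho_Y \circ h - f\| < \varepsilon$. For this, I would construct a positive definable function $\eta: X \to F$ with the following property: whenever $z \in F^m$ satisfies $\|z - f(x)\| < \eta(x)$, one has $z \in U_Y$ and $\|\rho_Y(z) - f(x)\| < \varepsilon(x)$. Pointwise, this is immediate from $\rho_Y(f(x)) = f(x)$ and continuity of $\rho_Y$. To produce a global $\eta$ that is locally bounded from below by positive constants, I would argue as in the Claim inside the proof of Lemma \ref{lem:appro1}, and then shrink $\eta$ to a positive $\mathcal D^r$ function by Lemma \ref{lem:aaa}. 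Setting $\tau(z) := \eta(\rho_X(z))/\sqrt{m}$ on $U_X$ completes the argument.

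The principal obstacle is the uniform definable choice of $\eta$: without definable compactness of $X$, producing a positive definable modulus of continuity for $\rho_Y$ along $f(X)$ requires the same kind of machinery already deployed in Lemma \ref{lem:appro1}, namely the filtered-collection argument built on Proposition \ref{prop:def_compact} together with Lemma \ref{lem:aaa}. Once $\eta$ is obtained, the rest is assembly.
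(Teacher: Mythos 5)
Your proposal is correct and follows essentially the same route as the paper: reduce to an open domain by precomposing with a tubular retraction of $X$, approximate into the ambient space via Theorem \ref{thm:appro}, and push back onto $Y$ with the retraction $\rho_Y$ after constructing a positive definable modulus (your $\eta$ is the paper's $\mu$) controlling both membership in $U_Y$ and the displacement under $\rho_Y$. The paper's own proof is no more detailed at the one point you flag as the principal obstacle, so nothing further is needed.
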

\begin{proof}
	The proof is similar to the proof of \cite[Theorem 1.1]{E}.
	We can first reduce to the case in which $X$ is a definable open subset in the same manner as \cite[Theorem 1.8]{E}.
	We next construct a $\mathcal D^r$ retraction $\rho:U \to Y$ by Theorem \ref{thm:appro_nbd}.
	We next consider the definable map $\mu:X \to F$ defined by $\mu(x)=\sup\{r>0\;|\; \forall y \in F^n\ \|y-f(x)\|<r \rightarrow y \in U \wedge \|\rho(y)-f(x)\|<\varepsilon(x)\}$, where $F^n$ is the ambient space of $Y$.
	Observe that $\mu$ is bounded below by a positive constant and apply a definable continuous function $\delta:X \to F$ such that $\delta<\mu$.
	We then find a $\mathcal D^r$ approximation $\widetilde{f}:X \to F^n$ by Theorem \ref{thm:appro}.
	The composition $\rho \circ \widetilde{f}$ is a desired approximation.
\end{proof}

\begin{remark}
In \cite{E}, the triviality theorems for families definable in an o-minimal expansion of an ordered field are demonstrated as an application of the approximation theorem.
However, the triviality fails for a definably complete locally o-minimal expansion of an ordered field.

We first recall definitions.
Fix a definably complete locally o-minimal expansion of an ordered field $\mathcal F=(F,<,+,0,\cdot,1,\ldots)$.
Let $A$ and $S$ be definable subsets of $F^m$ and $F^n$, respectively.
A \textit{definable trivialization} of a definable map $f:S \rightarrow A$ is a pair $(Z,\lambda)$ of a definable subset $Z$ of $F^N$ for some $N$ and a definable map $\lambda: S \rightarrow Z$ such that the map $(f,\lambda):S \rightarrow A \times Z$ is a homeomorphism.
The map $f$ is \textit{definably trivial} if it has a definable trivialization, and $f$ is \textit{definably trivial over $A'$} for a definable subset $A'$ of $A$, the restriction $f|_{f^{-1}(A')}:f^{-1}(A') \rightarrow A'$ is definably trivial.
A weak triviality  asserts that there exists a partition $A=A_1 \cup \cdots \cup A_k$ into definable sets such that $f$ is definably trivial over $A_i$ for each $1 \leq i \leq k$.
It is satisfied for o-minimal expansions of ordered fields \cite[Chapter 9, Theorem 1.2]{vdD}.

We construct a counterexample to the weak triviality when the structure is a definably complete locally o-minimal expansion of an ordered field.
The triviality theorems given in \cite{E} are stronger than the weak triviality.
The example constructed here is also a counterexample to them. 

Let $\mathcal M=(M,<,+,0,\cdot,1,\ldots)$ be a fixed o-minimal structure in a language $\mathcal L$.
We may consider that the set of integers $\mathbb Z$ is a subset of $M$.
We consider a binary predicate $P$.
Set $$P_n=\{(i,j) \in \mathbb Z^2\;|\; 1 \leq i \leq n,\ 1 \leq j \leq i\}$$ for all positive integers $n$.
The structure $\mathcal M_n=\langle \mathcal M, P_n\rangle$ is an $\mathcal L(P)$-expansion of $\mathcal M$, where $P$ is interpreted by $P_n$.
They are also o-minimal.
In particular, they are locally o-minimal and definably complete.
Take a non-principal ultrafilter $I$ of the set of positive integers $\mathbb N$.
The ultraproduct $\mathcal F=\prod_{n \in \mathbb N} \mathcal M_n/I=(F,<,+,0,\cdot,1,P,\ldots)$ is an elementary extension of $\mathcal M$ by \L o\'{s}'s theorem.
In particular, it is a definably complete locally o-minimal expansion of an ordered field because local o-minimality and definable completeness are expressed by first-order sentences.
We demonstrate that the weak triviality fails in $\mathcal F$.

Consider the definable map $f:P=\{(x,y) \in F^2\;|\; \mathcal F \models P(x,y)\} \rightarrow F$ given by $f(x,y)=x$.
Set $a_n=[(n,n,\ldots)] \in N$ for all $n \in \mathbb N$.
Here, the notation $[(n,n,\ldots)]$ denotes the equivalence class of $(n,n,\ldots) \in \prod_{n \in \mathbb N}M$ in $F=(\prod_{n \in \mathbb N}M)/I$.
We show that the cardinality of $f^{-1}(a_n)$ is $n$.
If the weak triviality holds true, the cardinalities of two inverse images $f^{-1}(a)$ and $f^{-1}(a')$ coincide when both the points $a$ and $a'$ is contained in a single $A_l$ for some $1 \leq l \leq k$.
So, the claim implies that the map $f$ is a counterexample to the weak triviality.

Take $b=[(b_1,b_2,\ldots)]$ so that $(a_n,b) \in f^{-1}(a_n)$.
Consider the partition $$\mathbb N= \bigcup_{i=1}^n \{ j\in \mathbb N\;|\; b_j = i\} \cup \{j \in \mathbb N\;|\; b_j \neq i \text{ for all } i=1,\ldots, n\}.$$
One of the sets $\{ j\in \mathbb N\;|\; b_j = i\}$ and $\{j \in \mathbb N\;|\; b_j \neq i \text{ for all } i=1,\ldots, n\}$ is contained in the ultrafilter $I$.
Since $(a,b) \in P$, the last set is not contained in $I$.
So, we can find $1 \leq i \leq n$ such that $\{ j\in \mathbb N\;|\; b_j = i\} \in I$.
We get $[(b_1,b_2,\ldots)]=[(i,i,\ldots)]$ by the definition of $\mathcal F$.
We have demonstrated that $f^{-1}(a_n)=\{(a_n,b) \in F^2\;|\; b = [(i,i,\ldots)] \text{ for some }  i \in \mathbb N \text{ with } 1 \leq i \leq n\}$.
\end{remark}

\subsection{Definable Positivstellensatz}\label{sec:positivstellensatz}
Positivstellensatz is a main achievement of real algebraic geometry.
Readers who are interested in it should consult \cite{BCR}.
Positivstellensatz for $\mathcal C^r$ functions definable in an o-minimal expansion of an ordered field is also demonstrated in \cite{AAB}.
We derive Positivstellensatz for $\mathcal D^r$ functions when the structure is a definably complete locally o-minimal expansion of an ordered field.
We first show the following lemmas:

\begin{lemma}\label{lem:vddm}
	Let $\mathcal F=(F,<,+,0,\cdot,1,\ldots)$ be a definably complete locally o-minimal expansion of an ordered field.
	The following assertions hold true:
	\begin{enumerate}
		\item[(1)] Let $g:A \times F \rightarrow F$ be a definable function with $A \subseteq F^m$.
		Then there exist definable functions $\psi:F \rightarrow F$ and $\rho:A \rightarrow F$ such that $|g(x,t)| < \psi(t)$ for all $x \in A$ and $t>\rho(x)$.
		\item[(2)] Let $f,g:F^n \rightarrow F$ be definable continuous functions of class $\mathcal C^r$ on $F^n \setminus g^{-1}(0)$ with $f^{-1}(0) \subseteq g^{-1}(0)$.
		Then there exist an odd increasing $\mathcal D^r$ bijection $\phi:F \rightarrow F$ and a $\mathcal D^r$ function $h:F^n \rightarrow F$ such that $\phi$ is $r$-flat at $0$ and $\phi \circ g=hf$.
		\item[(3)] Let $f:F^n \rightarrow F$ be a definable continuous functions of class $\mathcal C^r$ on $F^n \setminus f^{-1}(0)$.
		Then there exists an odd increasing $\mathcal D^r$ bijection $\phi:F \rightarrow F$ such that $\phi$ is $r$-flat at $0$ and $\phi \circ f$ is a $\mathcal D^r$ function.
	\end{enumerate}
\end{lemma}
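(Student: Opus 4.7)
The three assertions form a toolkit for the definable Positivstellensatz, with (1) providing the uniform bound that underlies (2) and (3); I will prove them in order. For (1), the approach is an analog of the van den Dries--Miller uniform asymptotic bound in the o-minimal setting. For each $x \in A$, Proposition \ref{prop:pre}(8) applied to $\{t : g(x,t) > 0\}$ shows that $t \mapsto g(x,t)$ has eventually constant sign, and Proposition \ref{prop:pre}(9) gives a threshold beyond which this one-variable function is monotone or locally constant. Proposition \ref{prop:definable_choice} allows me to select these thresholds definably, yielding a candidate $\rho_0 : A \to F$. To build a single $\psi(t)$ dominating $|g(x,t)|$ uniformly for $t > \rho(x)$, I would partition $A \times F$ via Proposition \ref{prop:multi1} into finitely many special $\mathcal{C}^0$ submanifolds, use the tubular neighborhoods of Theorem \ref{thm:tubular_decom} to obtain local parametrizations, and assemble piecewise bounds via the frontier condition. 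Enlarging $\rho_0$ to a sufficiently large definable $\rho$ then guarantees $|g(x,t)| < \psi(t)$ for all $t > \rho(x)$.

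For (2), set $h(x) := \phi(g(x))/f(x)$ on $F^n \setminus f^{-1}(0)$ and extend it by zero across $f^{-1}(0)$; the task is to choose $\phi$ so that this extension is $\mathcal{D}^r$. First apply (1) to obtain a definable growth function bounding $|g|$ together with the relevant partial derivatives of $f$ and $g$ on $F^n \setminus g^{-1}(0)$ in terms of proximity to $f^{-1}(0)$. Then construct $\phi$ as an odd increasing $\mathcal{D}^r$ bijection whose derivatives up to order $r$ vanish at zero so rapidly that the chain-rule expansion of every $\partial^\alpha h$ with $|\alpha| \leq r$ is dominated near $f^{-1}(0)$. The existence of an $r$-flat $\mathcal{D}^r$ bijection with the required decay is standard in a definably complete ordered field, using Theorem \ref{thm:zeroset} to produce auxiliary $\mathcal{D}^r$ functions with prescribed zero behavior.

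For (3), apply (2) with $g = f$ to obtain $\phi$ and $h$ with $\phi \circ f = hf$; since $f$ is only continuous on $f^{-1}(0)$, the factorization alone does not deliver $\mathcal{D}^r$ regularity of $\phi \circ f$, but the flatness of $\phi$ combined with uniform bounds from (1) on $|\partial^\alpha f|$ in terms of $|f|$ (valid on $F^n \setminus f^{-1}(0)$ where $f$ is of class $\mathcal{C}^r$) shows that every partial derivative $\partial^\alpha(\phi \circ f)$ of order at most $r$ extends continuously by zero across $f^{-1}(0)$, yielding the asserted $\mathcal{D}^r$ regularity.

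The principal obstacle is (1): without full cell decomposition, one must work with special submanifolds whose parametrizations are only continuous, and controlling the envelope $\psi(t)$ uniformly in $x$ requires a careful combination of Proposition \ref{prop:pre}(5), Proposition \ref{prop:definable_choice}, and the frontier condition from Theorem \ref{thm:tubular_decom}. Once (1) is in place, assertions (2) and (3) follow by the flatness arguments familiar from the classical setting.
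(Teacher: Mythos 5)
Your overall strategy---adapting van den Dries--Miller's Propositions C.4 and C.9 to this setting---is exactly the paper's, and your treatments of (2) and (3) agree with it in substance: the paper proves (2) by running the proof of C.9 verbatim, with assertion (1) in place of C.4 and with l'H\^{o}pital's rule justified via the intermediate value theorem in a definably complete structure, and it obtains (3) as a direct corollary of (2); your extra argument that the order-$\le r$ derivatives of $\phi\circ f$ extend continuously by zero across $f^{-1}(0)$ is essentially what that corollary amounts to, and you are right that the factorization $\phi\circ f=hf$ alone does not suffice. The genuine divergence is in (1), and it is also the weak point of your sketch. The paper's substitute for cell decomposition there is not the special-submanifold machinery but an induction on $\dim A$ using Proposition \ref{prop:cr_pre} (equivalently Proposition \ref{prop:pre}(4)): the auxiliary definable functions on $A$ arising in vdDM's argument (the threshold and the eventual-limit data) are continuous off a definable set of dimension $<\dim A$, so one builds the envelope $\psi$ on the continuity locus as in the o-minimal proof, recurses on the lower-dimensional remainder, and takes the maximum of the finitely many resulting envelopes. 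Your route through Proposition \ref{prop:multi1} and Theorem \ref{thm:tubular_decom} is heavier and partly off-target: tubular neighborhoods and the frontier condition provide retractions and adjacency control, neither of which bears on an asymptotic bound as $t\to\infty$, and a special submanifold lying over $A\times F$ need not contain a full tail $\{x\}\times(\rho(x),\infty)$ of any fiber, so the step ``assemble piecewise bounds'' leaves the uniformity of $\psi$ in $x$---the actual content of C.4---unaddressed. Since the decomposition theorem is itself proved by the same dimension induction, your route could likely be repaired, but the direct induction on $\dim A$ is what the paper uses and is the cleaner tool here.
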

\begin{proof}
	(1) The counterpart of this assertion in o-minimal setting is \cite[Proposition C.4]{vdDM}.
	The original proof is almost applicable to our settings using Proposition \ref{prop:pre}(8),(9) instead of monotonicity theorem for o-minimal structures.
	In the original proof, the cell decomposition theorem is used to reduce to the case in which a definable function $\xi$ defined on $A$ is continuous.
	The cell decomposition theorem is not available in our setting.
	Instead, we prove the assertion by induction on $\dim A$ using Proposition \ref{prop:cr_pre}.
	We omit the details.
	
	(2) The counterpart is \cite[Proposition C.9]{vdDM}.
	l'Hopital's rule is used in the original proof.
	It follows from the intermediate value theorem according to a classical proof.
	Since the intermediate value theorem holds true for continuous functions definable in a definably complete structure \cite{M}, l'Hopital's rule also holds true for definable functions.
	The literally same proof as the original proof demonstrates our assertion by using the assertion (1) in place of \cite[Proposition C.4]{vdDM}.
	
	(3) It is a corollary of (2).
\end{proof}

\begin{lemma}\label{lem:vdDM2}
Let $\mathcal F=(F,<,+,0,\cdot,1,\ldots)$ be a definably complete locally o-minimal expansion of an ordered field.
Let $A \subseteq F^m$ be a definable set and $f:A \times (0,\infty) \rightarrow F$ be a definable function.
There exists an odd increasing $\mathcal D^r$ bijection $\varphi:F \rightarrow F$ which is $r$-flat at $0$ such that $\displaystyle\lim_{t \to 0+}\varphi(t)f(x,t)=0$ for each $x \in A$. 
\end{lemma}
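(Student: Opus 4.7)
First, I plan to reduce the two-variable problem to a one-variable majorant via Lemma \ref{lem:vddm}(1). Extending $f$ by zero to $A\times F$ and applying that lemma to the definable map $(x,s)\mapsto f(x,1/s)$ yields definable functions $\psi_0:F\to F$ and $\rho:A\to F$ such that $|f(x,1/s)|<\psi_0(s)$ whenever $s>\rho(x)$. Setting $P(t):=|\psi_0(1/t)|+1$ for $t>0$ produces a positive definable function on $(0,\infty)$ with $|f(x,t)|<P(t)$ for every $x\in A$ and every sufficiently small $t>0$. It therefore suffices to construct an odd increasing $\mathcal{D}^r$ bijection $\varphi:F\to F$ that is $r$-flat at $0$ and satisfies $\varphi(t)P(t)\to 0$ as $t\to 0^+$.

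Next, by Proposition \ref{prop:pre}(8),(9) applied to $P$ together with the one-dimensional case of Proposition \ref{prop:cr_pre}, there exists $\varepsilon>0$ on which $P$ is continuous, monotone, and of class $\mathcal{C}^r$. If $P$ is bounded on $(0,\varepsilon)$, any fixed odd increasing $\mathcal{D}^r$ bijection $r$-flat at $0$ (e.g., $t\mapsto t^{2r+1}/(1+t^{2r})$) already works, so I may assume $P(t)\to\infty$ as $t\to 0^+$, in which case $P$ is strictly decreasing on $(0,\varepsilon)$. Using a definable $\mathcal{D}^r$ cutoff (whose existence relies on Theorem \ref{thm:zeroset}), I extend $P$ to a positive even $\mathcal{D}^r$ function on $F\setminus\{0\}$ that is bounded below by $1$. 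Define $g(0):=0$ and $g(t):=t/(1+P(|t|))$ for $t\neq 0$. Then $g$ is odd, continuous on $F$, and of class $\mathcal{C}^r$ on $F\setminus\{0\}$, and after a standard $\mathcal{D}^r$ gluing outside a small neighborhood of $0$, $g$ is a strictly increasing $\mathcal{D}^r$-off-$\{0\}$ bijection of $F$: near $0$ its derivative $[(1+P(t))-tP'(t)]/(1+P(t))^2$ is positive because $P>0$ and $P'\leq 0$. Crucially,
$$|g(t)|P(t)=\frac{|t|P(t)}{1+P(|t|)}\leq |t|\to 0.$$

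For $r=0$, the function $\varphi:=g$ already satisfies all requirements. For $r\geq 1$, I apply Lemma \ref{lem:vddm}(3) to $g$, obtaining an odd increasing $\mathcal{D}^r$ bijection $\phi:F\to F$, $r$-flat at $0$, such that $\phi\circ g$ is $\mathcal{D}^r$, and I set $\varphi:=\phi\circ g$. Then $\varphi$ is odd, strictly increasing, and a bijection as a composition of such maps, is $\mathcal{D}^r$ by construction, and is $r$-flat at $0$ by the higher chain rule, using $g(0)=0$ together with the $r$-flatness of $\phi$. The required decay follows from
$$\varphi(t)P(t)=\frac{\phi(g(t))}{g(t)}\cdot g(t)P(t),$$
since for $r\geq 1$ the first factor tends to $0$ (because $\phi$ is at least $1$-flat at $0$, so $\phi(u)/u\to 0$ as $u\to 0$, while $g(t)\to 0$) and the second factor tends to $0$ by construction. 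The principal technical obstacle is the global $\mathcal{D}^r$ extension of $P$ (and hence $g$) while preserving the strict monotonicity and bijectivity of $g$; this is routine using the definable $\mathcal{D}^r$ bump functions supplied by Theorem \ref{thm:zeroset}.
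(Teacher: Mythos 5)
Your proposal is correct and follows essentially the same route as the paper, which simply defers to van den Dries--Miller's proof of \cite[Lemma C.7]{vdDM} with Proposition \ref{prop:pre}(9) replacing cell decomposition: reduce to a one-variable majorant via Lemma \ref{lem:vddm}(1), build a continuous odd increasing $g$ with $g(t)P(t)\to 0$, and flatten it with Lemma \ref{lem:vddm}(3). One small repair: the $r$-flatness of $\varphi=\phi\circ g$ at $0$ should not be deduced from the higher chain rule, since $g$ is only continuous (not necessarily $\mathcal C^r$) at $0$; instead note that $|g(t)|\leq|t|$ and the $r$-flatness of $\phi$ give $\phi(g(t))=o(t^r)$, and then Taylor's theorem applied to the $\mathcal C^r$ function $\phi\circ g$ forces all derivatives up to order $r$ to vanish at $0$.
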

\begin{proof}
The assertion \cite[Lemma C.7]{vdDM} is almost the same as the lemma, but it is an assertion for o-minimal structures.
In the original proof, they used definable cell decomposition which is unavailable in our situation.
However, we can prove the lemma in the same manner as the original one using Proposition \ref{prop:pre}(9) in place of definable cell decomposition.
\end{proof}

The following Positivstellensatz holds true:
\begin{theorem}[Definable Positivstellensatz]\label{thm:positivstellensatz}
Let $\mathcal F=(F,<,+,0,\cdot,1,\ldots)$ be a definably complete locally o-minimal expansion of an ordered field.
Let $f_1, \ldots, f_k$ be $\mathcal D^r$ functions on $F^n$ such that the set $$S=\{x \in F^n\;|\; f_i(x) \geq 0\ \ (i=1,\ldots,k)\}$$
is not empty.
Let $g$ be a $\mathcal D^r$ function on $F^n$.
The following assertions hold true:
\begin{enumerate}
\item[(i)] If $g \geq 0$ on $S$, there exist $\mathcal D^r$ functions $p, v_0,\ldots, v_k$ on $F^n$ such that $p^{-1}(0) \subseteq g^{-1}(0)$ and $p^2g=v_0^2+\sum_{i=1}^k v_i^2f_i$.
\item[(ii)]  If $g > 0$ on $S$, there exist $\mathcal D^r$ functions $v_0,\ldots, v_k$ on $F^n$ such that $g=v_0^2+\sum_{i=1}^k v_i^2f_i$.
\end{enumerate}
\end{theorem}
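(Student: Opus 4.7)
The plan is to adapt the strategy of the proof of the corresponding $\mathcal C^r$ Positivstellensatz for o-minimal structures from \cite{AAB}, verifying at each step that an o-minimal ingredient used there has an acceptable substitute in the definably complete locally o-minimal setting, drawn from the tools developed earlier in this paper.

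\textbf{Reduction of (i) to (ii).} Given only $g \geq 0$ on $S$, I would first invoke Theorem~\ref{thm:zeroset} to pick a $\mathcal D^r$ function $p$ with $p^{-1}(0) = g^{-1}(0)$ (this forces the condition $p^{-1}(0) \subseteq g^{-1}(0)$ by construction). Then consider a perturbation $g_\varepsilon = g + \varepsilon(x) p(x)^2$ with $\varepsilon$ a positive $\mathcal D^r$ function, which becomes strictly positive on $S$. Applying (ii) to $g_\varepsilon$ yields an identity $g_\varepsilon = v_0^2 + \sum v_i^2 f_i$, and a flat bijection supplied by Lemma~\ref{lem:vddm}(2)(3) is then used to absorb the correction term while multiplying through by $p^2$, producing the desired identity for $p^2 g$ after rearrangement.

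\textbf{Proof of (ii).} The identity is established in two stages. First, construct definable continuous (not yet $\mathcal D^r$) functions $w_0,\ldots,w_k$ realising $g = w_0^2 + \sum w_i^2 f_i$ pointwise: on $S$, set $w_0 = \sqrt{g}$ and $w_i = 0$; off $S$ at least one $f_i$ is negative, and I would use Proposition~\ref{prop:definable_choice} to select indices definably and distribute $w_i(x)^2$ among those with $f_i(x)<0$, compensating via a nonnegative corrective term in $w_0(x)^2$ to force equality. Continuity across $\partial S$ follows because the weights off $S$ tend to $0$ while $w_0^2 \to g > 0$ on $S$. The partition of $F^n$ into the loci where each $f_i$ has a fixed sign, together with Theorem~\ref{thm:tubular_decom} and its frontier condition, provides the scaffolding on which the $w_i$ are glued. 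Second, smooth the $w_i$ to $\mathcal D^r$ functions by composing with an odd increasing $\mathcal D^r$ bijection which is $r$-flat at $0$, as produced by Lemma~\ref{lem:vddm}(3); any growth obstruction near the singular loci is killed using the bijections furnished by Lemma~\ref{lem:vdDM2}. Redistributing the resulting factors across the identity yields the required $v_i$.

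\textbf{Main obstacle.} The hardest step is the continuous construction of $w_0, \ldots, w_k$ in a way that extends continuously across $\partial S$ and glues compatibly across the loci where individual $f_i$ change sign. In \cite{AAB} this is handled via o-minimal cell decomposition; here, one must instead work with the coarser decomposition into special $\mathcal C^r$ submanifolds with tubular neighbourhoods (Theorem~\ref{thm:tubular_decom}), gluing local representations along the retractions $\rho_i$. A secondary subtlety is that the identity $g = v_0^2 + \sum v_i^2 f_i$ must hold \emph{exactly} and not merely up to approximation, so the approximation theorem Theorem~\ref{thm:appro2} cannot be applied naively to both sides; one must smooth first via flat bijections and then verify the identity, which is where the results Lemma~\ref{lem:vddm} and Lemma~\ref{lem:vdDM2} are essential.
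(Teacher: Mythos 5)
Your overall strategy---adapt \cite[Theorem 3.7]{AAB}, replacing the o-minimal ingredients by Lemma \ref{lem:vddm}, Lemma \ref{lem:vdDM2} and the machinery of Section \ref{sec:appro}---is exactly what the paper does; its proof is essentially a reference to \cite{AAB} together with the list of substitutes. But two steps of your reconstruction do not work as written. The reduction of (i) to (ii) is broken: if $p^{-1}(0)=g^{-1}(0)$, then at any point of $S\cap g^{-1}(0)$ both $g$ and $p$ vanish, so $g_\varepsilon=g+\varepsilon p^2$ vanishes there too and is \emph{not} strictly positive on $S$; (ii) does not apply to it. Even granting a representation $g_\varepsilon=v_0^2+\sum v_i^2f_i$, multiplying through by $p^2$ leaves the term $-\varepsilon p^4$ on the wrong side with the wrong sign, and the flat bijections of Lemma \ref{lem:vddm} cannot "absorb" it: they repair differentiability along zero sets, they do not turn a negative term into an element of the cone. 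The passage from the strict to the non-strict case has to go through the division-type statement of Lemma \ref{lem:vddm}(2) (producing the denominator $p$ via $\phi\circ g=hf$), not through a perturbation.

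Second, you explicitly decline to use the approximation theorem, whereas the paper names Theorem \ref{thm:appro} as one of the three essential substitutes, and the exactness worry you raise is resolved precisely by how it is deployed. One first writes $g=a_0+\sum_i a_if_i$ with $a_i\geq 0$ continuous and, crucially, $a_0>0$ everywhere, using the symmetric formula $a_i=c\cdot\max(-f_i,0)$, so that $a_if_i=-c\cdot\max(-f_i,0)^2$ and $a_0=g+c\sum_i\max(-f_i,0)^2$; continuity across $\partial S$ and across the loci where the active index changes is then automatic, whereas your pointwise distribution of weights chosen by Proposition \ref{prop:definable_choice} need not be continuous there, and your claim that the required weights tend to $0$ near $\partial S$ is exactly the $0/0$ Łojasiewicz point that your sketch leaves unaddressed. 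One then approximates the $\sqrt{a_i}$, $i\geq 1$, by $\mathcal D^r$ functions via Theorem \ref{thm:appro} (after composing with the flat bijections of Lemma \ref{lem:vddm}(3) where needed) and recovers the identity \emph{exactly} by solving for $v_0=\sqrt{g-\sum_i v_i^2f_i}$, the strict positivity of $a_0$ guaranteeing that the radicand stays positive if the approximation is fine enough. Without this absorption mechanism your argument has no way to terminate in an exact identity.
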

\begin{proof}
The proof of \cite[Theorem 3.7]{AAB} works also in our setting.
We have to use Lemma \ref{lem:vdDM2}, Lemma \ref{lem:vddm}(3) and Theorem \ref{thm:appro} instead of o-minimal counterparts.
\end{proof}

\section{Imbedding of definable $\mathcal C^r$ manifolds}\label{sec:manidfolds}
\subsection{Imbedding theorem}

We first define definable $\mathcal C^r$ manifolds and definable $\mathcal C^r$ maps between them when the structure is an definably complete locally o-minimal expansion of an ordered field.
The definition of a definable $\mathcal C^r$ manifold is tricky and different from that in the o-minimal setting \cite{K}, but it is useful in our settings.

\begin{definition}
	Let $\mathcal F=(F,<,+, \cdot, 0,1,\ldots)$ be a definably complete locally o-minimal expansion of an ordered field.
	Suppose that $1 \le r < \infty$.
	
%
	(1)
	A pair $(M, \{\varphi_i:U_i \rightarrow U'_i\}_{i \in I})$ of a topological space and a finite family of homeomorphisms is called a \textit{definable $\mathcal C^r$ manifold} or a \textit{$\mathcal D^r$ manifold} if  
	\begin{itemize}
		\item $\{U_i\}_{i \in I}$ is a finite open cover of $M$, 
		\item $U'_i$ is a $\mathcal D^r$ submanifold of $F^{m_i}$ for any $i \in I$ and,
		\item the composition $(\varphi_j|_{U_i \cap U_j}) \circ (\varphi_i|_{U_i \cap U_j})^{-1}:\varphi_i(U_i \cap U_j) \rightarrow \varphi_j(U_i \cap U_j)$ is a $\mathcal D^r$ diffeomorphism whenever $U_i \cap U_j \neq \emptyset$.
	\end{itemize}
	Here, the notation $\varphi_i|_{U_i \cap U_j}$ denotes the restriction of $\varphi_i$ to ${U_i \cap U_j}$.
	We use similar notations throughout the rest of this paper.
	The family $\{\varphi_i:U_i \rightarrow U_i'\}_{i \in I}$ is called a \textit{ $\mathcal D^r$ atlas} on $M$.
	We often write $M$ instead of $(M, \{\varphi_i:U_i \rightarrow U'_i\}_{i \in I})$ for short.
	Note that a $\mathcal D^r$ submanifold is naturally a $\mathcal D^r$ manifold.

	In the o-minimal setting, a $\mathcal D^r$ manifold is defined as the object obtained by pasting finitely many definable open sets.
	$\mathcal D^r$ submanifolds are pasted in our definition. 
	If we adopt the same definition of $\mathcal D^r$ manifolds as in the o-minimal setting, $\mathcal D^r$ manifolds of dimension zero should be a finite set because $F^0$ is a singleton.
	A $\mathcal D^r$ submanifold of dimension zero is not necessarily a $\mathcal D^r$ manifold in this definition.
	It seems to be strange, so we employed our definition of $\mathcal D^r$ manifolds.
	
	Given a $\mathcal D^r$ manifold $M$, two $\mathcal D^r$ atlases $\{\varphi_i:U_i \rightarrow U_i'\}_{i \in I}$ and $\{\psi_j:V_j \rightarrow V_j'\}_{j \in J}$ on $M$ are \textit{equivalent} if, for all $i \in I$ and $j \in J$,
	\begin{itemize}
		\item the images $\varphi_i(U_i \cap V_j)$ and $\psi_j(U_i \cap V_j)$ are open definable subsets of $U'_i$ and $V'_j$, respectively, and
		\item the $\mathcal D^r$ diffeomorphism $(\psi_j|_{U_i \cap V_j}) \circ (\varphi_j|_{U_i \cap V_j})^{-1}:\varphi_i(U_i \cap V_j) \rightarrow \psi_j(U_i \cap V_j)$ are definable whenever $U_i \cap U_j \neq \emptyset$.
	\end{itemize}
	The above relation is obviously an equivalence relation.
	
	A subset $X$ of the $\mathcal D^r$ manifold $M$ is \textit{definable} when $\varphi_i(X \cap U_i)$ are definable for all 
	$i \in I$. 
	When two atlases $\{\varphi_i:U_i \rightarrow U_i'\}_{i \in I}$ and $\{\psi_j:V_j \rightarrow V_j'\}_{j \in J}$ of a $\mathcal D^r$ manifold $M$ is equivalent, it is obvious that a subset of the $\mathcal D^r$ manifold $(S,\{\varphi_i\}_{i \in I})$ is definable if and only if it is definable as a subset of the $\mathcal D^r$ manifold $(M, \{\psi_j\}_{j \in J})$.
	
	The Cartesian product of two $\mathcal D^r$ manifold is naturally defined.
	A map $f:S \rightarrow T$ between $\mathcal D^r$ manifolds is \textit{definable} if its graph is definable in $S \times T$.

	(2)
	A definable subset $Z$ of $X$ is called 
	a \textit{$k$-dimensional $\mathcal D^r$ submanifold}
	of $X$ if each point $x \in Z$ there exist an open box $U_x$ of $x$ in $X$
	and a $\mathcal D^r$ diffeomorphism $\phi_x$ from $U_x$ to some open box $V_x$ of
	$F^d$ such that $\phi_x(x)= 0$ and $U_x \cap Y = \phi_x^{-1}(F^k \cap V_x)$. 
	
	(3) Let $X$ and $Y$ be $\mathcal D^r$ manifolds with $\mathcal D^r$ charts
	$\{\phi_i:U_i \to V_i\}_{i \in A}$ and $\{\psi_j:U'_j \to  V'_j\}_{j \in B}$, respectively. 
	A continuous map $f :X \to Y$ 
	is said to be a \textit{definable $C^r$ map} or a \textit{$\mathcal D^r$ map}
	if for any $i \in A$ and $j \in B$, the image $\phi_i(f^{-1}(V_j') \cap U_i)$ is definable and open in $F^n$ and 
	the map $\psi_j \circ f \circ \phi_i^{-1}:\phi_i(f^{-1}(V_j) \cap U_i) \to F^m$
	is a $\mathcal D^r$ map.
	
	(4) Let $X$ and $Y$ be $\mathcal D^r$ manifolds.
	We say that $X$ is \textit{definably $C^r$ diffeomorphic to} $Y$ or $\mathcal D^r$ diffeomorphic to $Y$
	if there exist $\mathcal D^r$ maps
	$f:X \to Y$ and $h: Y \to X$ such that $f \circ h=\operatorname{id}$ and $h \circ f = \operatorname{id}$.
	
	(5) A $\mathcal D^r$ manifold is called \textit{definably normal} if for any definable closed subset $C$ and definably open subset $U$ of $M$ with $C \subseteq U$, there exists a definable open subset $V$ of $M$ such that $C \subseteq V \subseteq \mycl_M(V) \subseteq U$.
\end{definition}

We begin to prove the imbedding theorem of $\mathcal D^r$ manifold into $F^n$ in the locally o-minimal setting.
It was already demonstrated in the o-minimal setting \cite{Kawa}, but it has a gap in the proof.
A classical proof for that a compact manifold is affine using the partition of unity works in our setting, but we need a slight modification.
We first prepare several technical lemmas.

\begin{lemma}\label{lem:sep0}
	Let $\mathcal F=(F,<,+,\cdot,0,1,\ldots)$ be a definably complete locally o-minimal expansion of an ordered field.
	Let $M \subseteq F^n$ be a $\mathcal D^r$ submanifold.
	Let $X$ and $Y$ be closed definable subsets of $M$ with $X \cap Y = \emptyset$.
	Then, there exists a $\mathcal D^r$ function $f:M \rightarrow [0,1]$ with $f^{-1}(0)=X$ and $f^{-1}(1)=Y$.
\end{lemma}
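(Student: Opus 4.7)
The plan is to reduce the problem to Theorem \ref{thm:zeroset} applied in the ambient space $F^n$, and then glue the two obtained zero-set functions into a single $[0,1]$-valued $\mathcal D^r$ function.

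First I would pass from closedness in $M$ to closedness in $F^n$. Because $M$ carries the subspace topology, the assumption that $X$ is closed in $M$ gives $\mycl_{F^n}(X) \cap M = X$, and similarly $\mycl_{F^n}(Y) \cap M = Y$. Set $\widetilde X = \mycl_{F^n}(X)$ and $\widetilde Y = \mycl_{F^n}(Y)$; these are definable closed subsets of $F^n$. Apply Theorem \ref{thm:zeroset} to obtain $\mathcal D^r$ functions $g,h:F^n \to F$ with $g^{-1}(0) = \widetilde X$ and $h^{-1}(0) = \widetilde Y$. Let $g' = g|_M$ and $h' = h|_M$; since $M$ is a $\mathcal D^r$ submanifold, these restrictions are $\mathcal D^r$ on $M$, and by the previous sentence $(g')^{-1}(0) = X$ and $(h')^{-1}(0) = Y$.

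Next I would paste these into the desired function. Because $X \cap Y = \emptyset$, the denominator $g'(x)^2 + h'(x)^2$ is strictly positive at every point of $M$, so
\[
f(x) = \frac{g'(x)^2}{g'(x)^2 + h'(x)^2}
\]
is a well-defined $\mathcal D^r$ function $M \to F$, and it takes values in $[0,1]$. By construction $f(x) = 0$ iff $g'(x) = 0$ iff $x \in X$, and $f(x) = 1$ iff $h'(x) = 0$ iff $x \in Y$, which is precisely the required condition.

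There is essentially no obstacle beyond the topological observation $\mycl_{F^n}(X) \cap M = X$ and the positivity of $g'^2 + h'^2$ on $M$; both are immediate. The only substantive ingredient is Theorem \ref{thm:zeroset}, which supplies $g$ and $h$; everything else is algebraic manipulation that preserves the $\mathcal D^r$ class.
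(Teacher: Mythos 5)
Your proof is correct and follows essentially the same route as the paper: both rest on Theorem \ref{thm:zeroset} and the formula $g^2/(g^2+h^2)$, whose denominator is positive precisely because $X\cap Y=\emptyset$. The only difference is the preliminary reduction --- the paper invokes Lemma \ref{lem:closed_mani} to replace $M$ by a closed submanifold so that $X$ and $Y$ become closed in the ambient space, whereas you apply Theorem \ref{thm:zeroset} to the closures $\mycl_{F^n}(X)$ and $\mycl_{F^n}(Y)$ and observe that intersecting with $M$ recovers $X$ and $Y$; both reductions work equally well.
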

\begin{proof}
	We may assume that $M$ is closed in $F^n$ by Lemma \ref{lem:closed_mani}. 
	There exist $\mathcal D^r$ functions $g,h:F^n \rightarrow F^n$ with $g^{-1}(0)=X$ and $h^{-1}(0)=Y$ by Theorem \ref{thm:zeroset}. 
	The function $f:F^n \rightarrow [0,1]$ defined by  $f(x)=\frac{g(x)^2}{g(x)^2+h(x)^2}$ satisfies the conditions $f^{-1}(0)=X$ and $f^{-1}(1)=Y$.
	The restriction of $f$ to $M$ satisfies the requirement.
\end{proof}

\begin{corollary}\label{cor:normal}
Consider a definably complete locally o-minimal expansion of an ordered field.
A $\mathcal D^r$ submanifold is definably normal.
\end{corollary}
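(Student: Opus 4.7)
The plan is to derive definable normality as a direct corollary of Lemma \ref{lem:sep0}, by using a separating $\mathcal D^r$ function to carve out the required intermediate neighborhood.

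Let $M$ be a $\mathcal D^r$ submanifold of some $F^n$. Given a closed definable subset $C \subseteq M$ and a definable open subset $U \subseteq M$ with $C \subseteq U$, set $X = C$ and $Y = M \setminus U$. These are disjoint closed definable subsets of $M$ (the set $Y$ is closed in $M$ because $U$ is open in $M$), so Lemma \ref{lem:sep0} supplies a $\mathcal D^r$ function $f : M \to [0,1]$ with $f^{-1}(0) = C$ and $f^{-1}(1) = M \setminus U$.

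Define $V = f^{-1}([0,1/2))$. Then $V$ is definable, and it is open in $M$ by continuity of $f$. Since $f(C) = \{0\} \subseteq [0,1/2)$, we have $C \subseteq V$. By continuity, $\mycl_M(V) \subseteq f^{-1}([0,1/2])$. Any $x \in M$ with $f(x) \leq 1/2$ satisfies $f(x) \neq 1$, hence $x \notin M \setminus U$, i.e., $x \in U$. Therefore $C \subseteq V \subseteq \mycl_M(V) \subseteq U$, which is exactly definable normality.

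There is no substantive obstacle once Lemma \ref{lem:sep0} is in hand; the argument is a standard Urysohn-type construction transplanted to the definable $\mathcal D^r$ category. The only mild point worth noting is that Lemma \ref{lem:sep0} requires only disjointness of the two closed sets, so the corner cases $C = \emptyset$ or $U = M$ (where one of the prescribed level sets is empty) cause no difficulty; alternatively, one may simply take $V = \emptyset$ or $V = M$ respectively in those cases.
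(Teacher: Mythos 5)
Your proof is correct and follows essentially the same route as the paper: apply Lemma \ref{lem:sep0} to $C$ and $M \setminus U$ to get a separating $\mathcal D^r$ function $f$, then take $V = f^{-1}([0,\tfrac{1}{2}))$ and check $C \subseteq V \subseteq \mycl_M(V) \subseteq f^{-1}([0,\tfrac{1}{2}]) \subseteq U$. The remarks on the degenerate cases are harmless but not needed, since Lemma \ref{lem:sep0} applies verbatim to empty closed sets.
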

\begin{proof}
	Let $C$ and $U$ be a definable closed and open subsets of a $\mathcal D^r$ submanifold $M$, respectively, such that $C \subseteq U$.
	There exists a $\mathcal D^r$ function $f:M \rightarrow [0,1]$ with $f^{-1}(0)=C$ and $f^{-1}(1)=M \setminus U$ by Lemma \ref{lem:sep0}.
	Set $V:=f^{-1}([0,\frac{1}{2}))$.
	The definable set is an open subset of $M$.
	We have $C \subseteq V \subseteq \mycl_M(V) \subseteq f^{-1}([0,\frac{1}{2}]) \subseteq U$.
	It means that $M$ is definably normal.
\end{proof}

The following lemma is the partition of unity:
\begin{lemma}[Partition of unity]\label{lem:unity0}
	Let $\mathcal F=(F,<,+,\cdot,0,1,\ldots)$ be a definably complete locally o-minimal expansion of an ordered field.
	Given a $\mathcal D^r$ atlases $\{\varphi_i:U_i \rightarrow U_i'\}_{1 \leq i \leq k}$ of a definably normal $\mathcal D^r$ manifold $M$, 
	there exist nonnegative $\mathcal D^r$ functions $\lambda_i$ on $M$ for all $1 \leq i \leq q$ such that $\sum_{i=1}^q\lambda_i = 1$, the closure of the set $\{x \in M\;|\; \lambda_i(x)>0\}$ is contained in $U_i$ and the family of definable open sets $\{\lambda_i^{-1}((0,\infty))\}_{i=1}^k$ is an open cover of $M$.
\end{lemma}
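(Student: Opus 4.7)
The plan is to mimic the classical partition-of-unity construction, adapting it with two applications of definable normality (to shrink the cover twice) and Lemma \ref{lem:sep0} (to produce a $\mathcal D^r$ bump function inside each chart, which is a $\mathcal D^r$ submanifold by hypothesis). Once the bump functions are in place, normalization by their sum yields the desired $\lambda_i$.

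First I would perform the standard double-shrinking. I claim that there exist definable open sets $V_i$ and $W_i$ (for $1 \le i \le k$) with
$$
\overline{V_i} \subseteq W_i \subseteq \mycl_M(W_i) \subseteq U_i \qquad (1 \le i \le k),
$$
such that $\{V_i\}_{i=1}^k$ still covers $M$. Construct $V_1,\ldots,V_k$ inductively: at the $i$-th step, the definable closed set
$$
C_i := M \setminus \bigl(V_1 \cup \cdots \cup V_{i-1} \cup U_{i+1} \cup \cdots \cup U_k\bigr)
$$
is contained in the definable open set $U_i$, so definable normality yields a definable open $V_i$ with $C_i \subseteq V_i \subseteq \mycl_M(V_i) \subseteq U_i$. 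Then for each $i$, a second application of definable normality to the pair $\mycl_M(V_i) \subseteq U_i$ produces $W_i$.

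Next I would construct the $\mathcal D^r$ bumps. Fix $i$. Since $\varphi_i \colon U_i \to U'_i$ is a homeomorphism and the sets $\mycl_M(V_i)$ and $U_i \setminus W_i$ are disjoint definable closed subsets of $U_i$, their images $\varphi_i(\mycl_M(V_i))$ and $U'_i \setminus \varphi_i(W_i)$ are disjoint definable closed subsets of the $\mathcal D^r$ submanifold $U'_i$. Lemma \ref{lem:sep0} gives a $\mathcal D^r$ function $h_i \colon U'_i \to [0,1]$ equal to $1$ on $\varphi_i(\mycl_M(V_i))$ and $0$ on $U'_i \setminus \varphi_i(W_i)$. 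Define
$$
g_i(x) = \begin{cases} h_i(\varphi_i(x)) & \text{if } x \in U_i,\\ 0 & \text{if } x \notin U_i.\end{cases}
$$
The main technical step is to check that $g_i$ is $\mathcal D^r$ on $M$. This reduces to verifying, for every chart $\varphi_j \colon U_j \to U'_j$, that $g_i \circ \varphi_j^{-1}$ is $\mathcal D^r$ on $U'_j$. At any point $y \in U'_j$ with $\varphi_j^{-1}(y) \in U_i$, the function agrees locally with $h_i \circ (\varphi_i \circ \varphi_j^{-1})$, a composition of $\mathcal D^r$ maps. At any point $\varphi_j^{-1}(y) \notin U_i$ we have $\varphi_j^{-1}(y) \notin \mycl_M(W_i)$ (since $\mycl_M(W_i) \subseteq U_i$), so an open neighborhood of $\varphi_j^{-1}(y)$ misses $W_i$, and on that neighborhood $g_i$ is identically zero; hence $g_i \circ \varphi_j^{-1}$ vanishes on an open neighborhood of $y$. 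In both cases $g_i \circ \varphi_j^{-1}$ is $\mathcal D^r$ at $y$.

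Finally, because $g_i \equiv 1$ on $V_i$ and $\{V_i\}_{i=1}^k$ covers $M$, the definable $\mathcal D^r$ function $\sigma := \sum_{i=1}^k g_i$ satisfies $\sigma \ge 1$ everywhere, and I set $\lambda_i := g_i/\sigma$. Then $\sum_i \lambda_i = 1$, each $\lambda_i$ is nonnegative $\mathcal D^r$, the set $\{\lambda_i > 0\} = \{g_i > 0\}$ is contained in $W_i$ so its closure lies in $\mycl_M(W_i) \subseteq U_i$, and the cover condition $\bigcup_i \{\lambda_i > 0\} = M$ follows from $V_i \subseteq \{g_i = 1\} \subseteq \{\lambda_i > 0\}$. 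The only delicate step I anticipate is the local regularity check in the third paragraph; the double-shrinking is arranged precisely so that the extension by zero falls in the open region where the bump has already been driven to $0$ by $h_i$, making $\mathcal D^r$-compatibility across chart transitions automatic.
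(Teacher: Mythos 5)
Your proposal is correct and follows essentially the same route as the paper: definable normality is used to shrink the cover, Lemma \ref{lem:sep0} supplies the $\mathcal D^r$ bump on each chart $U_i'$, the bump is transported by $\varphi_i$ and extended by zero (with the shrinking guaranteeing $\mathcal C^r$-ness of the extension), and the result is normalized by the sum. The only difference is bookkeeping: you perform the double shrinking up front for all $i$ and then build the bumps, while the paper interleaves the two in a single induction, using the positivity sets $\psi_j^{-1}((0,\infty))$ of the already-constructed bumps in place of your shrunken sets $V_j$.
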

\begin{proof}
	Set $U_0=\emptyset$.
	By induction on $i$, we construct $D^r$ function $\psi_i:M \to [0,\infty)$ such that 
	\begin{itemize}
		\item $\mysupp (\psi_i) \subseteq U_i$ and
		\item $M=\bigcup_{j=1}^i \psi_j^{-1}((0,\infty)) \cup \bigcup_{j=i+1}^k U_j$
	\end{itemize}
	for each $0 \leq i \leq k$.
	Here, $\mysupp (\psi_i)$ denotes the support of $\psi_i$, which is the closure of the set $\{x \in M\;|\; \psi_i(x)>0\}$.
	
	The case in which $i=0$ is trivial. Set $\psi_0=0$.
	For $i >0$, set $$V_i=\bigcup_{j=1}^{i-1} \psi_j^{-1}((0,\infty)) \cup \bigcup_{j=i+1}^k U_j.$$
	We have $M=U_i \cup V_i$ by the induction hypothesis.
	Put $W_i=M \setminus V_i$ and $W'_i=\varphi_i(M \setminus V_i$).
	Since $M$ is definably normal, there exists a definable open subset $B_i$ of $M$ such that $W_i \subseteq B_i \subseteq \mycl_M(B_i) \subseteq U_i$.
	Set $C_i=U_i \setminus B_i$, $B'_i=\varphi_i(B_i)$ and $C'_i=\varphi_i(C_i)$.
	Both $W'_i$ and $C'_i$ are definable closed subsets of $U'_i$, and they have an empty intersection.
	Therefore, there exists a $\mathcal D^r$ function $f_i$ on $U'_i$ which equals one in $W'_i$ and vanishes in $C'_i$ by Lemma \ref{lem:sep0}.
	Consider the definable map $\psi_i:M \to F$ given by $$\psi_i(x)=\left\{\begin{array}{ll}f_i(\varphi_i(x)) & \text{if } x \in U_i\\ 0 & \text{otherwise.}\end{array}\right.$$
	We demonstrate that this function satisfies the requirements.
	The equality $M=\bigcup_{j=1}^i \psi_j^{-1}((0,\infty)) \cup \bigcup_{j=i+1}^k U_j$ holds true because $\psi_i$ is positive in $M \setminus V_i$.
	Since $\{x \in U'_i\;|\; \varphi_i(x)>0\} \subseteq B'_i$, we have $\mysupp(\psi_i)
 \subseteq \mycl_M(B_i) \subseteq U_i$.	
	
	The function $\psi_i$ is obviously of class $\mathcal C^r$ by the inclusion $\mysupp(\psi_i) \subseteq U_i$.
	Set $$\lambda_i(x)=\dfrac{\psi_i(x)}{\sum_{i=1}^k \psi_i(x)}$$ for all $1 \leq i \leq k$.
	The functions $\lambda_i$ satisfies the requirements.	
\end{proof}
%

The following is the $\mathcal D^r$ imbedding theorem of $\mathcal D^r$ manifolds:
\begin{theorem}\label{thm:imbedding}
	Let $\mathcal F=(F,<,+,\cdot,0,1,\ldots)$ be a definably complete locally o-minimal expansion of an ordered field.
	Every definably normal $\mathcal D^r$ manifold is definably imbeddable into some $F^n$, and its image is a $\mathcal D^r$  submanifold of $F^n$.
\end{theorem}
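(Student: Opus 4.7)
The plan is to adapt the classical partition-of-unity imbedding argument to this setting. Fix a $\mathcal D^r$ atlas $\{\varphi_i:U_i\to U'_i\}_{i=1}^k$ of $M$ with $U'_i\subseteq F^{m_i}$, and apply Lemma \ref{lem:unity0} (which uses the definable normality hypothesis) to obtain nonnegative $\mathcal D^r$ functions $\lambda_1,\ldots,\lambda_k$ on $M$ satisfying $\sum_{i=1}^k\lambda_i\equiv 1$, $\mysupp(\lambda_i)\subseteq U_i$, and such that the definable open sets $V_j:=\{x\in M:\lambda_j(x)>0\}$ cover $M$.

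For each $i$, define $g_i:M\to F^{m_i}$ by $g_i(x)=\lambda_i(x)\varphi_i(x)$ on $U_i$ and $g_i(x)=0$ otherwise. Because $\mysupp(\lambda_i)\subseteq U_i$ is closed, any point outside $U_i$ has a whole neighborhood on which $\lambda_i$ vanishes identically, so $g_i$ is a well-defined $\mathcal D^r$ map on $M$. Setting $n=k+\sum_{i=1}^k m_i$, the candidate imbedding is the definable $\mathcal D^r$ map
$$\Phi:M\to F^n,\qquad \Phi(x)=\bigl(g_1(x),\ldots,g_k(x),\lambda_1(x),\ldots,\lambda_k(x)\bigr).$$

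The verification then proceeds in three steps. Injectivity: if $\Phi(x)=\Phi(y)$ then $\lambda_i(x)=\lambda_i(y)$ for every $i$, and since $\sum_i\lambda_i\equiv 1$ some $\lambda_j(x)=\lambda_j(y)>0$; hence $x,y\in U_j$ and $\lambda_j(x)\varphi_j(x)=\lambda_j(y)\varphi_j(y)$ gives $\varphi_j(x)=\varphi_j(y)$, so $x=y$. Homeomorphism onto image: let $\Omega_j\subseteq F^n$ be the definable open set where the $(\sum_i m_i+j)$-th coordinate is positive, and define the $\mathcal D^r$ map $\chi_j:\Omega_j\to F^{m_j}$ by dividing the $j$-th block of the first $\sum_i m_i$ coordinates by that positive coordinate. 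Then $\chi_j\circ\Phi=\varphi_j$ on $V_j$, and the identity $\Phi(V_j)=\Phi(M)\cap\Omega_j$ follows from injectivity plus the observation that $\lambda_j(x)>0$ iff $x\in V_j$. Consequently $\varphi_j^{-1}\circ\chi_j$ is a definable $\mathcal D^r$ left inverse of $\Phi|_{V_j}$ defined on a neighborhood $\Omega_j$ of $\Phi(V_j)$, so $\Phi$ is a homeomorphism onto its image.

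For the submanifold structure of $\Phi(M)\subseteq F^n$, it suffices to work on each $\Phi(V_j)=\Phi(M)\cap\Omega_j$. The composition $h_j:=\Phi\circ\varphi_j^{-1}:\varphi_j(V_j)\to F^n$ satisfies $\chi_j\circ h_j=\mathrm{id}$, so $dh_j$ is everywhere injective; the $\mathcal D^r$ inverse function theorem, applied to a suitably chosen $d\times d$ Jacobian minor of $h_j$ at each point, then produces a local $\mathcal D^r$ straightening of $\Phi(V_j)$ inside $F^n$, which is the definition of a $\mathcal D^r$ submanifold. The main technical point is the $\mathcal D^r$ regularity of the zero extensions $g_i$ across $\partial\mysupp(\lambda_i)$; this is immediate because $\lambda_i$ is itself $\mathcal D^r$ and identically zero on the open set $M\setminus\mysupp(\lambda_i)$, so all its partial derivatives vanish on this set and hence at its boundary by continuity. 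No analytic ingredient beyond the partition of unity (Lemma \ref{lem:unity0}) and the standard $\mathcal D^r$ calculus is required.
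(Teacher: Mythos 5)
Your proposal is correct and follows essentially the same route as the paper: the same map $\Phi(x)=(\lambda_1(x)\varphi_1(x),\ldots,\lambda_k(x)\varphi_k(x),\lambda_1(x),\ldots,\lambda_k(x))$ built from the partition of unity of Lemma \ref{lem:unity0}, the same injectivity argument, and the same ``divide by the positive coordinate $\lambda_j$'' trick to recover $\varphi_j$ and hence a local $\mathcal D^r$ inverse on each $\Phi(V_j)=\Phi(M)\cap\Omega_j$. The only cosmetic difference is that you certify the submanifold property of the image via an injective-differential/inverse-function-theorem argument, whereas the paper exhibits $\Phi(X_i)$ directly as the graph of a $\mathcal D^r$ map over $\pi_i(\Phi(X_i))$; both are standard and valid here.
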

\begin{proof}
	Let $M$ be a definably normal $D^r$ manifold with atlas $\{\varphi_i:U_i \to U'_i \subseteq F^{n_i}\}_{i=1}^k$.
	Take functions $\lambda_i:M \to F$ satisfying the conditions in Lemma \ref{lem:unity0}.
	Define the map $\Phi:M \to F^{\sum_{i=1}^kn_i+k}$ by
	$$\Phi(x)=(\lambda_1(x)\varphi_1(x),\ldots, \lambda_k(x)\varphi_k(x),\lambda_1(x), \ldots, \lambda_k(x)).$$
	The map $\Phi$ is well-defined because the support of $\lambda_i$ is contained in $U_i$.
	It is clearly a $\mathcal D^r$ map.
	
	Set $X_i=\lambda_i^{-1}((0,\infty))$ for $1 \leq i \leq k$.
	Note that $X_i$ is a subset of $U_i$.
	We first demonstrate the following claim:
	\medskip
	
	{\textbf{Claim.}} For $1 \leq i \leq k$, each $\Phi(X_i)$ is a $\mathcal D^r$ submanifold and open in $\Phi(M)$. 
	The restriction of $\Phi$ to $X_i$ is a $\mathcal D^r$ diffeomorphism onto its image.
	\begin{proof}[Proof of Claim.]
	Let $\pi_i:\Phi(M) \to \{(\lambda_i(x)\varphi_i(x), \lambda_i(x))\;|\; x \in M\}$ be the canonical projection.
	Let $\tau_i:\pi_i(\Phi(M))  \setminus \{0\}\to \varphi_i(X_i)$ be the $\mathcal D^r$ diffeomorphism given by 
	$\tau_i(y,z)=y/z$.
	Since $\varphi_i(X_i)$ is a definable open subset of a $\mathcal D^r$ submanifold $U'_i$, it is also a $D^r$ submanifold.
	We have $\Phi(X_i)=\Phi(M) \cap \pi_i^{-1}(F^{n_i} \times (F \setminus \{0\}))$.
	Therefore, $\Phi(X_i)$ is open in $\Phi(M)$.
	It is also trivial that $\Phi(X_i)$ is the graph of a $D^r$ map defined on $\pi_i(\Phi(X_i))=\pi_i(\Phi(M))  \setminus\{0\}$ after an appropriate permutation of coordinates.
	Therefore, it is a $\mathcal D^r$ submanifold.
	Since $\tau_i \circ \pi_i \circ \Phi|_{X_i}=\varphi_i|_{X_i}$ and the restriction of $\pi_i$ to $\Phi(X_i)$ 
	is a $\mathcal D^r$ diffeomorphism, 
	the restriction of $\Phi$ to $X_i$ is also a $\mathcal D^r$ diffeomorphism onto its image.
    \end{proof}
	
	We can get several corollaries from the claim.
	Firstly, the claim implies that $\Phi$ is a local $\mathcal D^r$ diffeomorphism.
	Secondly, by the definition of $\mathcal D^r$ submanifolds, 
	$\Phi(M)$ is a $\mathcal D^r$ submanifold because there is a definable open cover $\{\Phi(X_i)\}_{i=1}^k$ of $\Phi(M)$ each element of which is a $\mathcal D^r$ submanifold.
	
	We next demonstrate that $\Phi$ is an imbedding.
	We have only to show that $\Phi$ is a homeomorphism onto its image.
	The first task is to prove that $\Phi$ is injective.
	Let $x,y \in M$ with $\Phi(x)=\Phi(y)$.
	There is an $1 \leq i \leq k$ such that $\lambda_i(x)>0$.
	We also have $\lambda_i(y)>0$.
	They imply that $x \in U_i$ and $y \in U_i$.
	We immediately get $\varphi_i(x)=\varphi_i(y)$.
	It means that $x=y$.
	
	We next prove that the inverse of $\Phi$ is continuous.
	For that, we have only to prove that $\Phi$ is an open map.
	Let $U$ be a definable open subset of $M$.
	We demonstrate that $\Phi(U)$ is open in $\Phi(M)$.
	Take an arbitrary point $y \in \Phi(U)$.
	Take the unique point $x \in M$ such that $y=\Phi(x)$.
	Take a sufficiently small definable open neighborhood $V$ of $x$ in $M$ such that $V \subseteq U$.
	It is possible because $M$ is definably normal.
	We may assume that the restriction $\Phi|_{V}$ is a diffeomorphism by the claim.
	The definable set $\Phi(V)$ is an open neighborhood of $x$ because $\Phi|_{V}$ is a diffeomorphism.
	On the other hand, injectivity of $\Phi$ implies that $\Phi(V) \subseteq \Phi(U)$.
	We have demonstrated that $\Phi(U)$ is open in $\Phi(M)$.
\end{proof}

\begin{remark}
	We assume that $\mathcal M$ is a definably complete locally o-minimal expansion of an ordered field in this subsection.
	Every assertions in this subsection hold when $\mathcal M$ is a definably complete expansion of an ordered field in which an arbitrary definable closed set is the zero set of a $\mathcal D^r$ function. 
\end{remark}

\subsection{Definable quotient of groups}\label{sec:group_quotient}

As an application of Theorem \ref{thm:imbedding}, we want to prove that the quotient group of a definable $\mathcal C^r$ group by a definable normal subgroup. 
We first define definable $\mathcal C^r$ groups.

\begin{definition}
	Consider an expansion of a dense linear order without endpoints.
	A \textit{definable group} is a group $(G,\cdot,e)$ such that $G$  is definable and both the multiplication $(a,b) \mapsto a \cdot b$ and the inverse $a \mapsto a^{-1}$ are definable maps.
	We define a \textit{definable subgroup} of a definable group naturally.
	A definable group is called a \textit{definable $\mathcal C^r$ group} or a \textit{$\mathcal D^r$ group} if both the multiplication and the inverse are of class $\mathcal C^r$.
	
	A \textit{definable equivalence relation} $E$ on a definable set $X$ is a definable subset of $X \times X$ such that the relation $\sim$ defined by $a \sim b \Leftrightarrow (a,b) \in E$ is an equivalence relation.
	Let $G$ be a definable group and $H$ be its definable subgroup.
	The relation $E_H$ given by $E_H=\{(g,hg) \in G \times G\;|\; g \in G, h \in H\}$ is a definable equivalence relation.
\end{definition}

We first demonstrate that a $\mathcal D^r$ group is a $\mathcal D^r$ submanifold after some preparations.

\begin{definition}
	Consider a definably complete locally o-minimal expansion of an ordered field $\mathcal F=(F,<,+,\cdot,0,1,\ldots)$.
	Let $X$ and $Y$ be definable subsets of a common ambient space.
	The definable set $X$ is \textit{large} in $Y$ if $\dim(Y \setminus X)<\dim Y$.
\end{definition}

We next recall Wencel's result \cite[Corollary 2.5]{Wencel}.

\begin{proposition}\label{prop:finite_cover}
	Consider a definably complete locally o-minimal expansion of an ordered field $\mathcal F=(F,<,+,\cdot,0,1,\ldots)$.
	Let $G$ be a definable group of dimension $d$ and $U$ be a definable subset of $G$ which is large in $G$.
	There exists finitely many $g_1, \ldots, g_{d+1} \in G$ such that $G=\bigcup_{i=1}^{d+1} g_iU$.
\end{proposition}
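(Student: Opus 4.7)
The plan is to induct on $k$ to produce $g_1, \ldots, g_{d+1} \in G$ such that the intersection $D_k := \bigcap_{i=1}^k g_i C$ has dimension at most $d-k$, where $C := G \setminus U$ (so $\dim C \leq d-1$ by largeness of $U$). Since $G \setminus \bigcup_{i} g_i U = \bigcap_i g_i C$ (using $G \setminus g_i U = g_i(G \setminus U) = g_i C$), after $d+1$ steps the intersection $D_{d+1}$ will have dimension $\leq -1$ and hence be empty, giving $\bigcup_{i=1}^{d+1} g_i U = G$.

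For the base step I would take $g_1 = e$ and observe that left translation is a definable bijection, hence dimension-preserving by applying Proposition \ref{prop:pre}(5) to the map and to its inverse; so $\dim(g_1 C) = \dim C \leq d-1$, which is the desired bound for $k=1$.

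For the inductive step, assume $\dim D_k = d-k$ (if it is smaller, any $g_{k+1}$ works). The key object is the definable incidence set
$$F := \{(x,g) \in D_k \times G : x \in gC\}.$$
Its first projection $\pi_1 \colon F \to D_k$ is surjective (given $x \in D_k$ and any $c \in C$, take $g = xc^{-1}$) with fiber over $x$ equal to $xC^{-1}$, and this fiber has dimension $\dim C$ via the definable bijection $c \mapsto xc^{-1}$. Proposition \ref{prop:pre}(6) then yields
$$\dim F = \dim D_k + \dim C \leq (d-k) + (d-1) = 2d-k-1.$$
Now the fiber of the second projection $\pi_2 \colon F \to G$ over $g$ is $D_k \cap gC$, which is contained in $D_k$ and so has dimension at most $d-k$. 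If every such fiber had dimension exactly $d-k$, then $\pi_2$ would be a surjection with equidimensional fibers, and Proposition \ref{prop:pre}(6) would force $\dim F = d + (d-k) = 2d-k$, contradicting the bound above. Hence some $g_{k+1} \in G$ satisfies $\dim D_{k+1} = \dim(D_k \cap g_{k+1} C) \leq d-k-1$, completing the induction.

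The main obstacle will be justifying that group translations and inversion preserve dimension, since that is exactly what lets me conclude both that $\dim(xC^{-1}) = \dim C$ (equidimensionality of $\pi_1$-fibers) and that the maximum fiber dimension along $\pi_2$ does not exceed $\dim D_k$. This reduces to invoking Proposition \ref{prop:pre}(5) for a definable bijection together with its definable inverse; after that step the argument is clean fiber-dimension bookkeeping via Proposition \ref{prop:pre}(6).
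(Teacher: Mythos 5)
Your argument is correct. Note, though, that the paper does not prove this proposition from scratch: its ``proof'' is a citation of Wencel's Corollary 2.5 in \cite{Wencel}, which establishes the covering statement axiomatically for any dimension function satisfying certain conditions, together with the remark that the dimension of Definition \ref{def:dim} satisfies those conditions by \cite{FKK}. What you give instead is the classical generic-translates argument carried out directly: double-counting the incidence set $\{(x,g) : x \in D_k \cap gC\}$ via its two projections and the fiber-dimension formula of Proposition \ref{prop:pre}(6), with Proposition \ref{prop:pre}(5) applied to a definable bijection and its inverse to see that translation and inversion preserve dimension. This is essentially the same mechanism that underlies Wencel's abstract proof, but your version is self-contained within the toolkit the paper has already assembled, which is arguably more informative than the bare citation; the citation, on the other hand, buys generality (it applies to any first-order topological structure with a suitable dimension). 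Two cosmetic points: you named the incidence set $F$, which collides with the underlying field of the structure, and you should dispose of the degenerate case $C = \emptyset$ (i.e.\ $U = G$) at the outset, since surjectivity of the first projection, and hence the application of Proposition \ref{prop:pre}(6) to it, requires $C \neq \emptyset$. Neither affects the validity of the argument.
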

\begin{proof}
	It follows from \cite[Corollary 2.5]{Wencel}.
	In \cite[Corollary 2.5]{Wencel}, this proposition is given in more general context under the assumption that the dimension function satisfies several conditions given in \cite{Wencel}.
	Our dimension function satisfies these conditions by \cite[Proposition 2.8]{FKK}.
\end{proof}

\begin{lemma}\label{lem:large_decomposition}
	Let $\mathcal{F}=(F, <, +, \cdot, 0, 1, \dots)$ be a definably complete locally o-minimal expansion of an ordered field.
	Let $X$ be a definable subset of $F^n$.
	Then there exist  quasi-special $\mathcal C^r$ submanifolds $U_1, \dots, U_m$ contained in $X$ such that the union
	$U_1 \cup \dots \cup U_m$ is a $\mathcal D^r$ submanifold of $X$ and 
	$U_1 \cup \dots \cup U_m$ is large in $X$.
\end{lemma}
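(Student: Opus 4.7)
The plan is to apply Theorem \ref{thm:tubular_decom} to the singleton family $\{X\}$, obtaining a decomposition $\{(C_i,\pi_i,T_i,\eta_i,\rho_i)\}_{i=1}^N$ of $F^n$ into special $\mathcal{C}^r$ submanifolds with tubular neighborhoods that partitions $\{X\}$ and satisfies the frontier condition. Set $d=\dim X$. Let $U_1,\ldots,U_m$ be the $C_i$'s that are contained in $X$ and have dimension exactly $d$. Every special $\mathcal{C}^r$ submanifold is in particular a $\pi$-quasi-special $\mathcal{C}^r$ submanifold, since being $(X,\pi)$-good at every point of $\pi(X)$ implies every point of $X$ is $(X,\pi)$-normal; so $U_1,\ldots,U_m$ are quasi-special as required.

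Set $U=U_1\cup\cdots\cup U_m$. For largeness, note that $X$ is a disjoint union of the $C_i$'s contained in it; those not among $U_1,\ldots,U_m$ have dimension strictly less than $d$, so by Proposition \ref{prop:pre}(2) the finite union $X\setminus U$ has dimension $<d=\dim X$. Hence $U$ is large in $X$.

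The main point — and the step I expect to require the most care — is to show that $U$ is a $\mathcal{D}^r$ submanifold of $F^n$. For this I would use the frontier condition crucially: the frontier $\partial U_i=\operatorname{cl}(U_i)\setminus U_i$ is a union of pieces of the decomposition, and all of those pieces have dimension $<d$ by Proposition \ref{prop:pre}(3). Consequently, for every $j\neq i$, the piece $U_j$ (of dimension $d$) cannot be among them, and by disjointness of the decomposition $U_j$ has empty intersection with each such lower-dimensional piece. Therefore $U_j\cap\operatorname{cl}(U_i)=\emptyset$ whenever $i\neq j$. Equivalently, $\bigcup_{i\neq j}\operatorname{cl}(U_i)$ is a definable closed set disjoint from $U_j$, so each $U_j$ is open in $U$.

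Since each $U_j$ is a $\mathcal{D}^r$ submanifold of $F^n$ (being a quasi-special $\mathcal{C}^r$ submanifold of dimension $d$ one verifies readily that locally it is a $\mathcal{D}^r$ graph and hence a $\mathcal{D}^r$ submanifold in the sense of the paper), and each $U_j$ is open in $U$, every point of $U$ has a neighborhood in $F^n$ whose intersection with $U$ is contained in one of the $U_j$ and thus carries the local chart structure of a $\mathcal{D}^r$ submanifold. This shows $U=U_1\cup\cdots\cup U_m$ is a $\mathcal{D}^r$ submanifold of $F^n$, completing the proof. The only subtle point is the separation provided by the frontier condition; without it, distinct top-dimensional pieces could accumulate on each other and the union would fail to be a submanifold at such accumulation points.
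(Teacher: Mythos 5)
Your proof is correct, but it takes a genuinely different route from the paper's. You invoke the full strength of Theorem \ref{thm:tubular_decom} (a decomposition into special $\mathcal C^r$ submanifolds satisfying the frontier condition) and then use the frontier condition together with $\dim\partial U_i<\dim U_i$ to show that the distinct top-dimensional pieces have pairwise disjoint closures, so that the union is locally a single $\mathcal D^r$ graph; this is a clean way to rule out the accumulation problem you correctly identify, and all the steps (special implies quasi-special, largeness via Proposition \ref{prop:pre}(2), a quasi-special $\mathcal C^r$ submanifold of top dimension being locally a $\mathcal D^r$ graph) check out. The paper instead builds the pieces directly: for each coordinate projection $\pi$ it takes $U_\pi$, the set of $(X,\pi)$-normal points of $X$ (a quasi-special submanifold by \cite[Theorem 2.5]{FKK} and \cite[Lemma 4.2]{Fuji4}), removes the closure of the locus where the local graph fails to be $\mathcal C^r$ (small by Proposition \ref{prop:cr_pre}), and takes the union over the finitely many $\pi$. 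That construction needs no frontier condition and no decomposition theorem, because near any retained point the \emph{whole} of $X$ coincides with a single $\mathcal D^r$ graph, so nothing else can accumulate there; the price is that its pieces overlap and the largeness of the union requires a separate dimension count ($\dim(X\setminus\bigcup_\pi U_\pi)<d$, as in \cite[Proposition 4.3]{Fuji4}). In short, your argument is shorter given the heavy machinery of Theorem \ref{thm:tubular_decom}, while the paper's is more elementary and self-contained, resting only on Proposition \ref{prop:cr_pre} and basic dimension theory.
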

\begin{proof}
	Put $d=\dim X$ and $\Pi_{n, d}:=\{\pi:F^n \to F^d\;|\; \pi \mbox{ is a coordinate projection}\}$.
	It is obvious that $\Pi_{n, d}$ is a finite set.
	For any $\pi \in \prod_{n, d}$, by \cite[Theorem 2.5]{FKK} and \cite[Lemma 4.2]{Fuji4}, $$U_{\pi}=\{x \in X\;|\;x \text{ is } (X, \pi)\text{-normal}\}$$ is
	a quasi-special submanifold of dimension $d$ if $U_{\pi} \neq \emptyset$.
	For every $x \in U_{\pi}$,
	there exists an open box $B \subset F^n$ with $x \in B$
	such that, after changing coordinates if necessary, 
	$B \cap X$ is the graph of a definable continuous map defined on $\pi (B)$.
	
	We fix $\pi \in \Pi_{n,d}$ such that $U_{\pi}$ is not empty.
	By permuting the coordinates if necessary, we may assume that $\pi$ is the projection onto the first $d$ coordinates.
	Let $Z_\pi$ be the set of points $x \in U_{\pi}$ such that, for any open box $B$ with $x \in B$, the intersection $B \cap X$ is not the graph of a $\mathcal D^r$ map defined on $\pi(B)$.
	We want to demonstrate that $\dim Z_{\pi}<\dim U_{\pi}$.
	Assume for contradiction that $\dim Z_{\pi}=\dim U_{\pi}$.
	By Proposition \ref{prop:pre}(7),
	there exists a point $x \in Z_{\pi}$ such that for any open box $U \subset M^n$ with $x \in U$,
	$\dim (Z_{\pi} \cap U)=\dim U_{\pi}$.
	By Proposition \ref{prop:pre}(6) and $Z_{\pi} \subseteq U_{\pi}$,
	the projection image $\pi (Z_{\pi} \cap U)$ has a nonempty interior.
	We can take an open box $V$ contained in $\pi (Z_{\pi} \cap U)$. 
	Set $U'=\pi^{-1}(V) \cap U$.
	The intersection $U' \cap Z_{\pi}=U' \cap U_{\pi}$ is the graph of a definable map on $V$ such that the map is not of class $\mathcal C^r$ at any point in $V$.
	It contradicts Proposition \ref{prop:cr_pre}.
	We have demonstrated that $\dim Z_{\pi}<\dim U_{\pi}$.
	
	Set $T_{\pi}=U_{\pi} \setminus \mycl(Z_{\pi})$.
	We have $\dim (U_{\pi}-T_{\pi})<d$ by Proposition \ref{prop:pre}(3).
	Let $U_1, \dots, U_m$ be the enumeration of the family $\{U_{\pi}-T_{\pi}\;|\;\pi \in \prod_{n, d} \text{ and } U_{\pi} \neq \emptyset\}$.
	We can prove $\dim (X \setminus \cup_{\pi \in \prod_{n,d}}U_{\pi})<d$ similarly to the proof of \cite[Proposition 4.3]{Fuji4}.
	We omit the details.
	Since $\dim (U_{\pi}-T_{\pi})<d$, we have $\dim (X-\cup_{i=1}^m U_i)<d$.
	Each $U_i$ is a quasi-special $\mathcal C^r$ submanifold by the definition of $U_i$.
	At any point $x \in U_1 \cup \dots \cup U_m$, we can take an open box $B$ such that $X \cap B = X \cap U_i$ for some $1 \leq i \leq m$ and the intersection $X \cap U_i$ is the graph of a $\mathcal D^r$ map after changing the coordinates appropriately.
	It means that $U_1 \cup \dots \cup U_m$ is also a $\mathcal D^r$ submanifold.
\end{proof}

\begin{corollary}\label{cor:group_is_manifold}
	Consider a definably complete locally o-minimal expansion of an ordered field.
	A $\mathcal D^r$ group is a $\mathcal D^r$ manifold.
\end{corollary}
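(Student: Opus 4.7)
The plan is to exhibit a finite $\mathcal{D}^{r}$ atlas on $G$ by pulling back a single ``standard chart'' via left translations. Viewing $G$ as a definable subset of some $F^{n}$ and setting $d=\dim G$, I would first apply Lemma~\ref{lem:large_decomposition} to $G$ to obtain quasi-special $\mathcal{C}^{r}$ submanifolds $U_{1},\ldots,U_{m}\subseteq G$ whose union $U=U_{1}\cup\cdots\cup U_{m}$ is a $\mathcal{D}^{r}$ submanifold of $F^{n}$ and is large in $G$. A key observation, visible from the final sentence in the proof of that lemma, is that for every $x\in U$ there is an open box $B\subseteq F^{n}$ with $G\cap B=U_{i}\cap B$ for some $i$; thus $U$ is not merely large but actually open in $G$.

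Next, applying Proposition~\ref{prop:finite_cover} to the large definable subset $U$ of the definable group $G$ yields elements $g_{1},\ldots,g_{d+1}\in G$ with $G=\bigcup_{i=1}^{d+1}g_{i}U$. Since $G$ is a $\mathcal{D}^{r}$ group, left multiplication $L_{g}:G\to G$ is a homeomorphism for each $g\in G$, so each $g_{i}U$ is open in $G$. The candidate charts are then
\[
\varphi_{i}:g_{i}U\longrightarrow U,\qquad \varphi_{i}(x)=g_{i}^{-1}x,
\]
which are homeomorphisms onto the $\mathcal{D}^{r}$ submanifold $U$ of $F^{n}$.

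To verify the atlas axiom, the transition $\varphi_{j}\circ\varphi_{i}^{-1}$ acts as $x\mapsto g_{j}^{-1}g_{i}x$; it is the restriction of left multiplication by $g_{j}^{-1}g_{i}$ to an open subset of $U$. As this map is $\mathcal{D}^{r}$ in the sense of maps between definable sets (with $\mathcal{D}^{r}$ inverse given by left multiplication by $g_{i}^{-1}g_{j}$), it restricts to a $\mathcal{D}^{r}$ diffeomorphism between the relevant open subsets of the submanifold $U\subseteq F^{n}$. Hence $\{\varphi_{i}:g_{i}U\to U\}_{i=1}^{d+1}$ is a $\mathcal{D}^{r}$ atlas, realizing $G$ as a $\mathcal{D}^{r}$ manifold.

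The main obstacle I anticipate is the openness of $U$ in $G$: Lemma~\ref{lem:large_decomposition} asserts only largeness, but the atlas strategy depends on covering $G$ by \emph{open} translates of $U$. This openness must be extracted from the internal structure of the proof of the lemma --- specifically the local identity $X\cap B=U_{i}\cap B$ --- rather than from its statement. A subsidiary technical point is that since $G$ is a priori only a definable set in $F^{n}$, the $\mathcal{D}^{r}$ character of the transitions must be justified directly from the definition of $\mathcal{D}^{r}$ maps between definable sets, applied to left multiplication restricted to the submanifold $U$.
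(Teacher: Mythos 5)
Your proposal is correct and follows essentially the same route as the paper: Lemma \ref{lem:large_decomposition} to produce a large $\mathcal D^r$ (union of quasi-special) piece, Proposition \ref{prop:finite_cover} to cover $G$ by finitely many left translates of it, and left-translation charts whose transitions are restrictions of left multiplication. The only cosmetic difference is that you use the single union $U$ as the chart target where the paper indexes charts by the individual $U_j$; your explicit remarks on the openness of $U$ in $G$ and on the $\mathcal D^r$-ness of the transitions address points the paper leaves implicit.
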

\begin{proof}
	Let $G$ be a $\mathcal D^r$ group and set $d=\dim G$.
	There are finitely many quasi-special $\mathcal C^r$ submanifolds $U_1, \ldots, U_m$ and $g_1,\ldots, g_{d+1} \in G$ such that $$G=\bigcup_{i=1}^{d+1} \bigcup_{j=1}^m g_iU_j$$ by Lemma \ref{lem:large_decomposition} and Proposition \ref{prop:finite_cover}.
	Let $\varphi_{ij}:g_iU_j \to U_j$ be the $\mathcal D^r$ map given by $g_ix \mapsto x$.
	The family $\{\varphi_{ij}\}_{1 \leq i \leq d+1, 1 \leq j \leq m}$ gives a $\mathcal D^r$ atlas of $G$.
\end{proof}

Our next task is to prove that a definable subgroup of a $\mathcal D^r$ group $G$ is a closed $\mathcal D^r$ subgroup.
We first prove a more general fact. 

\begin{theorem}\label{thm:open_cover}
	Consider a definably complete locally o-minimal expansion of an ordered field.
	Let $r>0$.
	A $\mathcal D^r$ submanifold has a definable open cover whose members are quasi-special $\mathcal C^r$ submanifolds.
\end{theorem}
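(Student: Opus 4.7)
The plan is to cover $M$ by the finitely many definable subsets at which $M$ is a local $\mathcal C^r$-graph over a coordinate projection onto $F^d$, where $d = \dim M$. Let $\pi_1, \dots, \pi_N$, with $N = \binom{n}{d}$, enumerate the coordinate projections $F^n \to F^d$, and set
$$V_i := \{p \in M \mid p \text{ is } (M, \pi_i)\text{-}\mathcal C^r\text{-normal}\}.$$
Each $V_i$ is definable because the condition defining $(X,\pi)$-$\mathcal C^r$-normality is first-order expressible (it asserts the existence of open boxes on which $X$ coincides with the graph of a $\mathcal D^r$ map).

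First I would check that each $V_i$ is open in $M$ and is itself a $\pi_i$-quasi-special $\mathcal C^r$ submanifold. If $p \in V_i$, there exist open boxes $A \subseteq F^d$ and $B \subseteq F^{n-d}$ (after the appropriate coordinate permutation) such that $(A \times B) \cap M$ is the graph of a $\mathcal D^r$ map $f \colon A \to B$. Every point of this graph is also $(M, \pi_i)$-$\mathcal C^r$-normal using the same boxes, so $(A \times B) \cap M \subseteq V_i$ and hence $V_i$ is open in $M$. Moreover, $(A \times B) \cap V_i = (A \times B) \cap M$ is the graph of $f$, which shows that every point of $V_i$ is $(V_i, \pi_i)$-$\mathcal C^r$-normal, and the image $\pi_i(V_i)$ is open in $F^d$ because it locally contains $A$. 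The $\mathcal C^r$-analogue of the characterization of quasi-special submanifolds recalled in the excerpt (from \cite[Lemma 4.2]{Fuji4} and \cite[Theorem 2.5]{FKK}) then gives that $V_i$ is a $\pi_i$-quasi-special $\mathcal C^r$ submanifold.

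Next, I would show that $\{V_i\}_{i=1}^N$ covers $M$. Given $p \in M$, the definition of $\mathcal D^r$ submanifold supplies a $\mathcal D^r$ diffeomorphism $\varphi \colon U \to V$ from an open neighborhood $U$ of $p$ onto an open neighborhood $V$ of the origin in $F^n$ with $\varphi(p)=0$ and $\varphi(M \cap U) = V \cap \{x_{d+1} = \cdots = x_n = 0\}$. Parameterize $M \cap U$ by $\psi(y_1, \dots, y_d) := \varphi^{-1}(y_1, \dots, y_d, 0, \dots, 0)$. Since $(\varphi^{-1})'(0)$ is invertible, its first $d$ columns, which form the $n \times d$ Jacobian $\psi'(0)$, are linearly independent. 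Pick a subset of $d$ rows forming an invertible $d \times d$ submatrix, and let $\pi_i$ be the coordinate projection onto the corresponding coordinates. Then $\pi_i \circ \psi$ has invertible Jacobian at $0$, and the inverse function theorem for $\mathcal D^r$ maps makes $\pi_i \circ \psi$ a local $\mathcal D^r$ diffeomorphism at $0$. Consequently $\pi_i$ restricted to $M \cap U$ is a local $\mathcal D^r$ diffeomorphism at $p$, i.e., $M$ is locally the graph of a $\mathcal D^r$ map over a neighborhood of $\pi_i(p)$. This is exactly the $(M, \pi_i)$-$\mathcal C^r$-normality of $p$, so $p \in V_i$.

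The main technical obstacle is the appeal to the inverse function theorem for $\mathcal D^r$ maps, which is not established explicitly in the excerpt; it should nevertheless be available in our setting, since the standard contraction-mapping proof can be carried out inside the ordered field using definable completeness in place of the Dedekind completeness of $\mathbb R$, and it is implicitly relied on elsewhere in the paper (for example in the construction of $\mathcal D^{r-1}$ tubular neighborhoods). Granting this tool, the argument is essentially a dictionary translation between the two local descriptions of $d$-dimensional submanifolds: local diffeomorphism onto a coordinate slice on one side, and local graph over a coordinate projection on the other.
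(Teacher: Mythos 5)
Your proposal is correct and follows essentially the same route as the paper: both cover the submanifold by the finitely many sets of points over which it is locally a $\mathcal C^r$ graph for some coordinate projection $\pi:F^n\to F^d$, using the linear algebra of the tangent space to find a projection with invertible Jacobian and the inverse function theorem to pass from the chart to a graph description. The only cosmetic differences are that the paper proves openness of these sets via continuity of $x\mapsto T_xX$ into the Grassmannian rather than directly from the graph neighborhoods, and it discharges your flagged reliance on the definable inverse function theorem exactly as you anticipate, by noting that the proof in van den Dries uses only the intermediate value theorem, which holds in definably complete structures.
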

\begin{proof}
	The implicit function theorem for $\mathcal C^1$ maps definable in o-minimal structures is found in \cite[p.112]{vdD}.
	Only intermediate value theorem is used for the proof and it holds true for definably complete structures.
	Therefore, the implicit function theorem for $\mathcal C^1$ maps is available in our setting.
	In addition, when the definable map in consideration is of class $\mathcal C^r$, the inverse map is also of class $\mathcal C^r$ because the Jacobian of the inverse map is the inverse of the Jacobian of the original map.
	We use this fact.
	
	Let $\mathcal{F}=(F, <, +, \cdot, 0, 1, \dots)$ be a definably complete locally o-minimal expansion of an ordered field.
	We fix a $\mathcal D^r$ submanifold $X$ of $F^n$ of dimension $d$.
	Let $\Pi_{n,d}$ be the set of coordinate projections from $F^n$ onto $F^d$.
	We first show the following claim:
	\medskip

	\textbf{Claim 1.} 
	Let $x \in X$ and $\pi \in \Pi_{n,d}$.
	If $\pi(T_xX)=F^d$, the point $x$ is $(X,\pi)$-$\mathcal C^r$-normal.
	\begin{proof}[Proof of Claim 1]
	Let $\varphi:U \to V$ be an atlas of $X$ at $x$.
	In other word, $U$ and $V$ are definable open subsets of $F^n$, $\varphi$ is a  $\mathcal D^r$ diffeomorphism, $\varphi(0)=x$ and $\varphi(U \cap H)=X \cap V$, where $H=\{x =(x_1,\ldots, x_n) \in F^n\;|\;x_{d+1}=\dots =x_n=0\}$. 
	By the definition of tangent spaces and the assumption that $\pi(T_xX)=F^d$, the matrix $d_0(\pi \circ \varphi|_{H})$ is invertible.
	By the inverse function theorem described above, there exist a definable open neighborhood $W$ of $\pi(x)$ and $\mathcal D^r$ map $g:W \to F^d$ such that $\pi \circ \varphi \circ g= \operatorname{id}$ on $W$.
	Set $\psi=\varphi \circ g: W \to V$.
	We have $\psi(W) \subseteq X \cap V$ and $\pi \circ \psi = \operatorname{id}$.
	It means that the point $x$ is $(X,\pi)$-$\mathcal C^r$-normal.
	We have proven the claim.
	\end{proof}
	
	We next demonstrate the following claim:
	\medskip
	
	\textbf{Claim 2.} Let $\pi \in \Pi_{n,d}$.
	Set $T_{\pi}:=\{x \in X\;|\; \pi(T_xX)=F^d\}$.
	The definable set $T_{\pi}$ is open and a $\pi$-quasi-special $\mathcal C^r$ submanifold.
	
	\begin{proof}[Proof of Claim 2]
	It is obvious that $T_\pi$ is definable.
	We consider the map $\Delta:X \to \mathbb G_{n,d}$ given by $\Delta(x)=T_xX$.
	Here, $\mathbb G_{n,d}$ denotes the Grassmannian, which is algebraic.
	The map $\Delta$ is definable and continuous.
	The subset $U_\pi=\{L \in \mathbb G_{n,d}\;|\; \pi(L)=F^d\}$ is an semialgebraic open subset of $\mathbb G_{n,d}$.
	Therefore, $T_{\pi}=\Delta^{-1}(U_{\pi})$ is also open.
	
	By Claim 1, any point in $T_{\pi}$ is $(X,\pi)$-$\mathcal C^r$-normal.
	Since $T_{\pi}$ is definable and open, any point in $T_{\pi}$ is $(T_{\pi},\pi)$-$\mathcal C^r$-normal.
	We can demonstrate that $T_{\pi}$ is $\pi$-quasi-special $\mathcal C^r$-submanifold in the same manner as \cite[Lemma 4.2]{Fuji4}.
	We have demonstrated Claim 2.
	\end{proof}
	
	We are now ready to complete the proof of the theorem. 
	Since $\Pi_{n,d}$ is a finite set, thanks to Claim 2, we have only to demonstrate $X \subseteq \bigcup_{\pi \in \Pi_{n,d}} T_{\pi}$.
	Let $x$ be an arbitrary point of $X$.
	Let $e_1,\ldots, e_d \in F^n$ be the basis of the vector space $T_xX$.
	Set $A=(e_1,\ldots,e_d)$, which is $(n,d)$-matrix.
	We can take $1 \leq i_1 < \dots <i_d \leq n$ such that the square matrix $B$ constructed from the $i_1, \ldots i_d$-th rows of $A$ is invertible.
	Let $\pi:M^n \to M^d$ be the coordinate projection given by $\pi(x_1,\ldots, x_n)=(x_{i_1},\ldots, x_{i_d})$.
	We obviously have $\pi(T_xX)=F^d$.
	It means that $x \in T_{\pi}$.
	We have shown the theorem.
\end{proof}

\begin{proposition}\label{prop:subgroup}
	Consider a definably complete locally o-minimal expansion of an ordered field $\mathcal{F}=(F, <, +, \cdot, 0, 1, \dots)$.
	Let $G$ be a $\mathcal D^r$ group and $H$ be a definable subgroup of $G$.
	Then $H$ is a closed in $G$ and $\mathcal D^r$ submanifold of $G$.
\end{proposition}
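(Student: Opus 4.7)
The proof splits into showing (i) $H$ is closed in $G$, and (ii) $H$ is a $\mathcal D^r$ submanifold of $G$.

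For (i), I would argue by contradiction. Let $n$ be such that $G \subseteq F^n$ and suppose $a \in \mycl_G(H) \setminus H$. The map $\ell_a \colon G \to G$, $x \mapsto ax$, is a definable homeomorphism with inverse $\ell_{a^{-1}}$ (continuity follows from continuity of multiplication in $G$), so $\ell_a(H) = aH \subseteq \mycl_G(H)$. Furthermore $aH \cap H = \emptyset$, since $ah \in H$ would give $a = (ah)h^{-1} \in H$. Hence
\[
aH \;\subseteq\; \mycl_G(H) \setminus H \;\subseteq\; \mycl_{F^n}(H) \setminus H \;=\; \partial H.
\]
Applying Proposition \ref{prop:pre}(5) to $\ell_a$ and $\ell_{a^{-1}}$ yields $\dim(aH) = \dim H$, but Proposition \ref{prop:pre}(3) gives $\dim \partial H < \dim H$, a contradiction. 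Thus $H$ is closed in $G$.

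For (ii), first observe that $H$ inherits definable $\mathcal C^r$ multiplication and inversion from $G$, hence is itself a $\mathcal D^r$ group. Apply Lemma \ref{lem:large_decomposition} to the definable set $H \subseteq F^n$ to obtain quasi-special $\mathcal C^r$ submanifolds $U_1, \dots, U_m \subseteq H$ whose union $U$ is a $\mathcal D^r$ submanifold of $F^n$ of dimension $d = \dim H$ and is large in $H$. Replacing $U$ by $U \setminus \mycl_{F^n}(H \setminus U)$, which remains a $\mathcal D^r$ submanifold of $F^n$ large in $H$ by Proposition \ref{prop:pre}(2),(3), we may further assume $U$ is open in $H$. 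Proposition \ref{prop:finite_cover}, applied to the group $H$ and its large subset $U$, gives elements $h_1, \dots, h_{d+1} \in H$ with $H = \bigcup_{i=1}^{d+1} h_i U$, which is then an open cover of $H$. Since each $\ell_{h_i}$ is a $\mathcal D^r$ diffeomorphism of the $\mathcal D^r$ manifold $G$ (Corollary \ref{cor:group_is_manifold}), it transports the local submanifold structure of $U$ to $h_i U$; thus $H$ is covered by open subsets that are $\mathcal D^r$ submanifolds of $G$, and therefore $H$ itself is a $\mathcal D^r$ submanifold of $G$.

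The main obstacle I foresee is the verification that $U$ is indeed a $\mathcal D^r$ submanifold of $G$ in the paper's sense: at $x \in U$, one must produce a $\mathcal D^r$ chart of $G$ straightening $U$ to a coordinate subspace. Quasi-specialness supplies a coordinate projection $\pi \colon F^n \to F^d$ and an open box $B \ni x$ with $U \cap B$ the graph of a $\mathcal D^r$ map over $\pi(B)$, while the atlas from Corollary \ref{cor:group_is_manifold} identifies a neighborhood of $x$ in $G$ with an open subset of a quasi-special $\mathcal C^r$ submanifold given as a graph over another coordinate projection $\pi' \colon F^n \to F^{\dim G}$. Aligning the two quasi-special structures and invoking the implicit function theorem for definable $\mathcal C^r$ maps (available in this setting as used in the proof of Theorem \ref{thm:open_cover}) produces the required chart. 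This chart-compatibility step is the technically most delicate part of the argument.
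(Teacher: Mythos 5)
Your proof is correct in outline but takes a genuinely different route from the paper's in both halves, and one of your steps is left essentially unexecuted. For closedness, the paper simply quotes Pillay's theorem on definable subgroups in first-order topological structures (using that definable sets here are constructible), whereas you give a self-contained dimension argument: $aH\subseteq\partial H$ forces $\dim H=\dim(aH)\leq\dim\partial H<\dim H$. This works and is arguably more transparent, but the justification ``$\ell_a$ is a homeomorphism, so $aH\subseteq\mycl_G(H)$'' is a non sequitur as written; what you need is that right translation $r_h(x)=xh$ by $h\in H$ is a homeomorphism of $G$ carrying $H$ onto $H$, hence $\mycl_G(H)$ onto $\mycl_G(H)$, so that $ah=r_h(a)\in\mycl_G(H)$ for $a\in\mycl_G(H)$. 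For the submanifold statement, both you and the paper find one good relatively open piece of $H$ and spread it over $H$ by translation (your appeal to Proposition \ref{prop:finite_cover} is an unnecessary detour: translating a single good neighborhood by $t\mapsto tg^{-1}h$, as the paper does, already reaches every point of $H$). The substantive difference is where the local work happens. The paper's Claim does it head-on: inside a chart presenting $G$ locally as the graph of a $\mathcal D^r$ map $\psi$ over $C\subseteq F^{\dim G}$, it shows that $\pi(H)$ is locally a quasi-special $\mathcal C^r$ submanifold of $F^{\dim G}$, the graph of some $\tau$, so that $H$ is locally the graph of $(\tau,\psi)$ inside the graph of $\psi$ --- which is exactly the ``submanifold of $G$'' structure required. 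Your route via Lemma \ref{lem:large_decomposition} only yields $U$ as a $\mathcal D^r$ submanifold of the ambient $F^n$; reconciling that graph presentation with the charts of $G$, which you defer as ``the technically most delicate part,'' is not a finishing touch but is precisely the content of the paper's Claim, i.e.\ the heart of the proof. Your sketch of it can be completed (compose with the chart $\pi'|_{G\cap B'}$, note that the induced map $\beta$ on $\pi(B)$ satisfies $(\pi\circ\iota)\circ\beta=\operatorname{id}$ and is therefore an injective $\mathcal D^r$ immersion with continuous inverse, then apply the rank/implicit function theorem), so I would not call the approach wrong; but as submitted the decisive step is asserted rather than proved.
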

\begin{proof}
	A definably complete locally o-minimal structure is a first-order topological structure in the sense of \cite{P87}.
	Any definable set is always constructible by \cite[Theorem 2.5]{FKK} and \cite[Corollary 3.10]{Fuji4}.
	The definable subgroup $H$ is a closed subgroup by \cite[Proposition 2.7]{P87} together with the above facts.
	\medskip
	
	{\textbf{Claim.}} Let $X$ be a $\mathcal D^r$ submanifold of $F^n$ of dimension $m$ and 
	$Y \subseteq X$ be a definable closed subset of $X$ with $\dim Y=k$.
	Then there exists an open box $U$ such that $U \cap Y$ is a $\mathcal D^r$ submanifold of $U \cap X$.
	
	\begin{proof}[Proof of Claim]
	By Theorem \ref{thm:open_cover}, there exist finitely many quasi-special $C^r$ submanifolds $U_1, \dots, U_s$ such that $\{U_1, \dots, U_s\}$ is an open cover of $X$.
	Take $i$ such that $\dim (Y \cap U_i)=k$.
	Such an $i$ exists by Proposition \ref{prop:pre}(2).
	Considering $U_i$ instead of $X$, and permuting coordinates if necessary,
	we may assume that 
	$X$ is a $\pi$-quasi-special $\mathcal C^r$ submanifold and $\pi$ is the projection onto first $m$ coordinates.
	By Proposition \ref{prop:pre}(7),
	there exists an $x' \in Y$ such that for any open box with $x' \in B$,
	$\dim (Y \cap B)=k$.
	Take a sufficiently small $B$.
	By the definition of quasi-special $\mathcal C^r$ manifolds,
	we may assume that 
	$X \cap B$ is the graph of a $\mathcal D^r$ map defined on $C=\pi (B)$.
	Replacing $X$ by $X \cap B$,
	we may assume that $X$ is the graph of a $\mathcal D^r$ map $\psi$ defined on $C$.
	By Proposition \ref{prop:pre}(5)(6), we have $\dim \pi (Y)=k$.
	By the definition of dimension,
	there exists a coordinate projection $\pi':F^m \to F^k$ such that 
	$\myint(\pi'(\pi(Y))) \neq \emptyset$.
	Permuting the coordinates once again, we may assume that $\pi'$ is the projection of $F^m$ onto the first $k$ coordinates.
	
	Put $S=\{y \in \pi'(\pi(Y))|\dim((\pi')^{-1}(y) \cap \pi (Y))=0\}$.
	By Proposition \ref{prop:pre}(6),
	we have $\dim S=k$.
	In particular, the interior of $S$ is not an empty set.
	Set $$T=\{y \in S\;|\; \text{For any }x \in \pi (Y) \text{ with } \pi'(x)=y, x \text{ is } (\pi(Y), \pi')\text{-}\mathcal C^r \text{normal}\}.$$
	Similarly to \cite[Lemma 4.3]{Fuji4}, we get $\myint (T) \neq \emptyset$.
	By Proposition \ref{prop:definable_choice}, there exists a definable map $\tau:T \to (\pi')^{-1}(T) \cap \pi (Y)$.
	By Proposition \ref{prop:cr_pre}, 
	there exists an open box $V$ such that $\tau|_V$ is of class $\mathcal C^r$.
	Similarly to \cite[Lemma 4.2]{Fuji4},
	$\pi (Y) \cap (\pi')^{-1}(V)$ is a $\pi'$-quasi-special $C^r$ submanifold.
	
	There exists an open box $W \subseteq F^m$ such that $W \cap \pi (Y)$ is the graph of $\tau|_{\pi'(W)}$.
	Put $U:=\pi^{-1}(W)$.
	The intersection $U \cap Y$ is the graph of $(\tau|_{\pi'(W)}, \psi|_{W \cap \pi (Y)})$.
	The other intersection $U \cap X$ is the graph of $\psi|_{W \cap \pi (Y)}$.
	Thus $U \cap Y$ is a $D^r$ submanifold of $U \cap X$.
	We have proven the claim.
	\end{proof}
	
	By Claim,
	there exist $g \in H$ and open box $U$ with $g \in U$
	such that 
	$H \cap U$ is a $\mathcal D^r$ submanifold of $G \cap U$.
	For any $h \in H$, the map $\phi_h:G \to G$ given by $\phi_h(t)=tg^{-1}h$ is a  $\mathcal D^r$ diffeomorphism and
	$\phi_h(H)=H$.
	Then $h \in \phi_h (U)$ and $H \cap \phi_h (U)$ is a $D^r$ submanifold of $g \cap \phi_h(U)$.
	By the definition of $D^r$ submanifolds,
	$H$ is a $D^r$ submanifold of $G$.
\end{proof}

We prove the existence and uniqueness of $\mathcal D^r$ group structure of a definable group.
\begin{definition}
	Consider an expansion of an ordered field.
	Let $G$ be a definable group.
	A \textit{definable $\mathcal C^r$ group structure} or \textit{$\mathcal D^r$ group structure} on $G$ is a pair of a $\mathcal D^r$ group $H$ and a definable group isomorphism $\iota: G \to H$. 
	Two $\mathcal D^r$ group structures $(H_1,\iota_1)$ and $(H_2,\iota_2)$ are \textit{equivalent} if the composition $\iota_2 \circ \iota_1^{-1}$ is a $\mathcal D^r$ diffeomorphism.
\end{definition}

\begin{theorem}\label{thm:dr_structure}
	Consider a definably complete locally o-minimal expansion of an ordered field $\mathcal F=(F,<,+,\cdot,0,1,\ldots)$.
	Let $G$ be a definable group.
	There exists a $\mathcal D^r$ group structure on $G$ and it is unique up to equivalence.
\end{theorem}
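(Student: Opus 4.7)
The plan is twofold. For existence I would construct a $\mathcal D^r$ manifold structure on $G$ that makes the group operations $\mathcal D^r$, and then realize $G$ as an actual $\mathcal D^r$ submanifold via Theorem \ref{thm:imbedding}. For uniqueness I would show that every definable group isomorphism between $\mathcal D^r$ groups is automatically a $\mathcal D^r$ diffeomorphism, by generic smoothness together with translations.

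For existence, first apply Lemma \ref{lem:large_decomposition} to $G$ to obtain a $\mathcal D^r$ submanifold $V \subseteq G$ that is large in $G$ (and, after shrinking to the interior of $V$ in $G$, open in $G$; the dimension drop is controlled by Proposition \ref{prop:pre}(3)). Applying Proposition \ref{prop:cr_pre} to the restriction of the definable multiplication $m : G \times G \to G$ to $V \times V$ yields a nonempty definable open subset $W \subseteq V \times V$ on which $m$ is of class $\mathcal D^r$, and similarly a definable open subset of $V$ on which the inverse map is $\mathcal D^r$. Pick $(a,b) \in W$, set $c := ab$, and compose the chart near $c$ with the abstract left translation by $c^{-1}$ to produce a chart $\varphi : V_e \to V_e'$ around the identity $e$ in which multiplication and inversion are $\mathcal D^r$ near $(e,e)$ and $e$ respectively. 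The bijections $L_g = m(g,\cdot)$ and $R_g = m(\cdot,g)$ are used here as abstract group maps (not yet known to be $\mathcal D^r$) to transport the smooth behavior of $m$ from $(a,b)$ down to $(e,e)$.

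Next, by Proposition \ref{prop:finite_cover} applied to $V_e$ (which is large in $G$), finitely many left translates $g_1 V_e, \ldots, g_k V_e$ cover $G$. Define $\varphi_i(x) := \varphi(g_i^{-1} x)$ on $g_i V_e$. The transition $\varphi_j \circ \varphi_i^{-1}$ is left multiplication by $g_j^{-1} g_i$ expressed in the chart $\varphi$, hence $\mathcal D^r$ by the local smoothness of $m$ near $(e,e)$. This produces a $\mathcal D^r$ atlas on $G$ with $\mathcal D^r$ multiplication and inversion. Definable normality of the resulting $\mathcal D^r$ manifold follows by separating disjoint closed sets with a $\mathcal D^r$ function pulled back through charts using Theorem \ref{thm:zeroset}, exactly as in Lemma \ref{lem:sep0} and Corollary \ref{cor:normal}. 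Theorem \ref{thm:imbedding} then furnishes a definable $\mathcal D^r$ imbedding $\iota : G \to F^N$ onto a $\mathcal D^r$ submanifold $H$; transferring the group structure to $H$ makes $H$ a $\mathcal D^r$ group and $(H,\iota)$ a $\mathcal D^r$ group structure on $G$.

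For uniqueness, given two $\mathcal D^r$ group structures $(H_1,\iota_1)$ and $(H_2,\iota_2)$, the composite $\phi := \iota_2 \circ \iota_1^{-1} : H_1 \to H_2$ is a definable group isomorphism between $\mathcal D^r$ groups. By Proposition \ref{prop:cr_pre}, $\phi$ is $\mathcal D^r$ on a definable dense open subset of $H_1$; since $\phi$ is a homomorphism and left translations on $H_1$ and $H_2$ are $\mathcal D^r$ diffeomorphisms, $\phi$ is $\mathcal D^r$ everywhere, and the same argument applied to $\phi^{-1}$ yields a $\mathcal D^r$ diffeomorphism, so the two structures are equivalent. The main obstacle is the bootstrap step in existence: carefully turning the generic smoothness of $m$ on $V \times V$ into a $\mathcal D^r$ chart around $e$ whose pullbacks through abstract left translations give $\mathcal D^r$-compatible transitions. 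The delicate point is that generic smoothness and the group-theoretic translations must interact consistently, and the $\mathcal D^r$-compatibility of the transitions must be verified without circularity before using the atlas to declare translations to be $\mathcal D^r$.
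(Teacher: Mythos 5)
Your uniqueness argument is exactly the paper's: generic $\mathcal D^r$-ness of $\iota_2\circ\iota_1^{-1}$ via Proposition \ref{prop:cr_pre}, then propagation by translations. The gap is in existence, and it is precisely the point you flag at the end but do not resolve. Your atlas is $\varphi_i(x)=\varphi(g_i^{-1}x)$, so the transition $\varphi_j\circ\varphi_i^{-1}$ is the left translation $L_{g_j^{-1}g_i}$ read through the chart $\varphi$ at the identity. The element $h=g_j^{-1}g_i$ is an arbitrary element of $G$, far from $e$ in general, so ``local smoothness of $m$ near $(e,e)$'' says nothing about $x\mapsto hx$ on $V_e$: that local statement only controls products of two elements both lying in the good neighborhood. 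To make the transitions $\mathcal D^r$ you need the much stronger statement that for \emph{every} $a,b\in G$ the map $x\mapsto axb$ is $\mathcal D^r$ on the definable open set where it stays inside the good chart; this is obtained by a genuine bootstrapping argument (writing $axb=(au)(u^{-1}xb)$ for a suitably generic $u$ and iterating), and it is exactly the content of Wencel's Theorem 3.5, which the paper invokes (adapted to this setting using Proposition \ref{prop:cr_pre} in place of the continuity property) to produce a large open $V$ on which inversion, all maps $x\mapsto axb$, and all maps $(x,y)\mapsto axby$ are $\mathcal D^r$ where defined. Without that input your construction is circular: you would be using $\mathcal D^r$-ness of translations to certify the atlas that is supposed to establish $\mathcal D^r$-ness of translations.

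Two smaller issues. First, since the paper's definition of a definable group does not assume the operations are continuous for the ambient topology, ``the chart near $c$ composed with abstract left translation by $c^{-1}$'' need not produce an open set or a homeomorphism onto its image; the topology on $G$ as a $\mathcal D^r$ manifold is being \emph{defined} by the atlas, which again forces you to verify the transition compatibility first. Second, your appeal to Corollary \ref{cor:normal} for definable normality of the resulting abstract manifold does not go through as stated: that corollary concerns $\mathcal D^r$ submanifolds of $F^n$, and gluing chartwise separating functions into a global one already presupposes a partition of unity, whose construction in Lemma \ref{lem:unity0} takes definable normality as a hypothesis.
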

\begin{proof}
	We first prove the existence.
	Using Proposition \ref{prop:cr_pre} instead of continuity property in \cite{Wencel}, we can prove that there exists a definable subset $V$ of $G$ satisfying the following conditions in the same manner as \cite[Theorem 3.5]{Wencel}.
	\begin{itemize}
		\item $V$ is large and open in $G$;
		\item The inversion is a $\mathcal D^r$ map from $V$ onto $V$;
		\item For any $a, b \in G$, the set $$Z(a,b):=\{x \in V\;|\; axb \in V\}$$ is open in $V$ and the map $x \mapsto axb$ is $\mathcal D^r$;
		\item For any $a,b \in G$, the set $$Z'(a,b)=\{(x,y) \in V \times V\;|\; axby \in V\}$$ is open in $V \times V$ and the map $F_{a,b}:Z'(a,b) \ni (x,y) \mapsto axby \in V$ is $\mathcal D^r$. 
	\end{itemize}
Apply Proposition \ref{prop:multi1} to $V$ and decompose $V$ into finitely many special $\mathcal C^r$ manifolds.
Let $\{U_1,\ldots, U_m\}$ be the family of special $\mathcal C^r$ manifolds of dimension $=\dim G$ in the decomposition.
The definable set $U_i$ and open in $V$ for each $1 \leq i \leq m$.
The union $\bigcup_{i=1}^m U_i$ is large in $V$ and it is also large in $G$.
Apply Proposition \ref{prop:finite_cover} to the union $\bigcup_{i=1}^m U_i$.
There are finitely many $a_1, \ldots, a_n \in G$ with $G=\bigcup_{i=1}^m\bigcup_{j=1}^n a_jU_i$.

We define the $\mathcal D^r$ manifold $M$ as follows:
The map $\varphi_{ij}:a_jU_i \to U_i$ is defined by $a_jU_i \ni a_jx \mapsto x \in U_i$. 
Note that $U_i$ is a $\mathcal D^r$ submanifold by the definition of special $\mathcal C^r$ submanifolds.
The pair $M=(G,\{\varphi_{ij}\}_{1 \leq i\leq m,1 \leq j \leq n})$ is a $\mathcal D^r$ manifold.

We want to show that the multiplication and the inversion in $G$ are $\mathcal D^r$ maps defined on $M \times M$ and $M$, respectively.
We only consider the multiplication.
The proof is similar for the inversion.
We denote the multiplication by $f:G \times G \to G$.
Fix $g_1, g_2 \in G$.
There exist $1 \leq i \leq m$ and $1 \leq j \leq n$ with $g_1g_2 \in a_jU_i$.
We also choose $1 \leq i_k \leq m$ and $1 \leq j_k \leq n$ so that $g_k=a_{j_k}U_{i_k}$ for $k=1,2$.
Take a sufficiently small definable open neighborhood $W_k$ of $g_k$ contained in $a_{j_k}U_{i_k}$ for $k=1,2$.
We have only to show that $g:\varphi_{i_1j_1}(W_1) \times \varphi_{i_2j_2}(W_2) \to U_i$ given by $g(x_1,y_2)=\varphi_{ij}(f(\varphi_{i_1j_1}^{-1}(x_1),\varphi_{i_2j_2}^{-1}(x_2)))$ is $\mathcal D^r$.

Set $u_k=a_{j_k}^{-1}g_k \in V$ for $k=1,2$.
The set $Z''(a_j^{-1}g_1,g_2):=Z'(a_j^{-1}g_1,g_2) \cap F_{a_j^{-1}g_1,g_2}^{-1}(U_i)$ is open and contains $(e,e)$.
The restriction of $F_{a_j^{-1}g_1,g_2}$ to $Z''(a_j^{-1}g_1,g_2)$ is of class $\mathcal C^r$.
The map $x \mapsto u_k^{-1}x$ is also $\mathcal D^r$ near the point $u_k$ because $u_k \in Z(u_k^{-1},e)$ for $k=1,2$.
We have $g(x_1,x_2)=F_{a_j^{-1}g_1,g_2}(u_1^{-1} x_1,u_2^{-1} x_2)$.
It implies that $g$ is $\mathcal D^r$ near $(u_1,u_2)$.

We have demonstrated that $M$ is $\mathcal D^r$ and multiplication and inversion in $G$ induce $\mathcal D^r$ maps on $M$.
The $\mathcal D^r$ manifold $M$ is $\mathcal D^r$ isomorphic to a $\mathcal D^r$ submanifold $H$ by Theorem \ref{thm:imbedding}.
Let $\iota:M \to H$ be the imbedding.
We can naturally define the group operations on $H$ and they are $\mathcal D^r$.
Therefore, the pair $(H,\iota)$ is a $\mathcal D^r$ group structure on $G$.

We next prove the uniqueness up to equivalence.
Let $(H_1,\iota_1)$ and $(H_2,\iota_2)$ be $\mathcal D^r$ group structures on $G$.
Consider the map $f:=\iota_2 \circ \iota_1^{-1}:H_1 \to H_2$.
There exists a definable subset $U$ of $H_1$ such that the restriction of $f$ to $U$ is $\mathcal D^r$ and $U$ is large in $H_1$ by Proposition \ref{prop:cr_pre}.
Take an arbitrary element $h \in H_1$ and $g \in U$.
We have $f(x)=f(h) \cdot f(g^{-1}) \cdot f|_{U}(gh^{-1}x) $ for each $x \in H_1$ sufficiently close to $h$ because $f$ is a group isomorphism.
It implies that $f$ is $\mathcal C^r$ near the point $h$ because $H_1$ and $H_2$ are $\mathcal D^r$ groups.
Since $h$ is arbitrary, the map $f=\iota_2 \circ \iota_1^{-1}$ is $\mathcal D^r$.
In the same way, the composition $\iota_1 \circ \iota_2^{-1}$ is also $\mathcal D^r$.
It means that $(H_1,\iota_1)$ and $(H_2,\iota_2)$ are equivalent.
\end{proof}

We begin with the proof for the existence of the definable $\mathcal C^r$ quotient of a $\mathcal D^r$ group by a definable subgroup.

We recall the notion of definably identifying maps.
\begin{definition}
	Consider an expansion of a dense linear order without endpoints $\mathcal M=(M,<,\ldots)$.
	Let $X \subseteq M^m$ and $Y \subseteq M^n$ be definable sets and $f:X \to Y$ be a definable continuous map.
	The map $f$ is \textit{definably identifying} if it is surjective and, for each definable subset $K$ in $Y$, $K$ is closed in $Y$ whenever $f^{-1} (K)$ is closed in $X$.
%
\end{definition}

\begin{definition}
	Consider a structure $\mathcal M=(M,\ldots)$, a definable set $X$ and a definable equivalence relation $E$ on $X$.
	A \textit{definable quotient of $X$ by $E$} is a definably identifying definable surjective continuous map $f:X \to Y$ such that $f(x)=f(x')$ if and only if $(x,x') \in E$.
	In addition, when both $X$ and $Y$ are $\mathcal D^r$ submanifolds and $f$ is a $\mathcal D^r$ map, we call it a \textit{definable $\mathcal C^r$ quotient of $X$ by $E$} or a \textit{$\mathcal D^r$ quotient of $X$ by $E$} 
	
	We consider the case in which a definable group $G$ acts on a definable set $X$.
	Assume that the action $G \times X \to X$ is a $\mathcal D^r$ map.
	A $\mathcal D^r$ quotient of $X$ by $G$ is defined as the $\mathcal D^r$ quotient of $X$ by the definable equivalence relation $E_{G,X}:=\{(x,gx) \in X \times X\;|\; x \in X, g \in G\}$.
\end{definition}

Our final result is as follows:

\begin{theorem}\label{thm:quotient_group}
	Consider a definably complete locally o-minimal expansion of an ordered field.
	Let $0 \leq r < \infty$.
	Let $G$ be a $\mathcal D^r$ group and $H$ be a definable subgroup of $G$. 
	Assume that $G$ is bounded and closed in the ambient space.
	Then there exist a $\mathcal D^r$ submanifold $X$ of dimension $\dim G-\dim H$ and a $\mathcal D^r$ quotient $\iota:G \to X$ of $G$ by $H$.
	
	In addition, if $H$ is a normal subgroup of $G$, there exist $\mathcal D^r$ maps $\mymult:X \times X \to X$ and $\myinv:X \to X$ such that $\mymult_{G/H}(\iota(g_1),\iota(g_2)) =\iota(g_1g_2)$ and $\myinv_{G/H}(\iota(g))=\iota(g^{-1})$ for $g,g_1,g_2 \in G$.
	In other word, the definable set $X$ is a $\mathcal D^r$ group and it is isomorphic to the quotient group $G/H$ as a group.  
\end{theorem}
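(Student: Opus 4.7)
Since $G$ is a $\mathcal D^r$ group, Proposition \ref{prop:subgroup} shows $H$ is a closed $\mathcal D^r$ subgroup of $G$, and the equivalence relation $E_H$ is a definable closed subset of $G \times G$. My plan is to endow the set-theoretic quotient $X := G/H$ (right cosets, with the quotient topology) with the structure of a definably normal $\mathcal D^r$ manifold of dimension $d := \dim G - \dim H$, then invoke Theorem \ref{thm:imbedding} to realise it as a $\mathcal D^r$ submanifold of $F^N$; the group operations will subsequently descend to $\mathcal D^r$ maps when $H$ is normal.

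\textbf{Atlas from transverse slices.} At each $g_0 \in G$, Theorem \ref{thm:open_cover} together with the definable implicit function theorem (available in our setting, as used in the proof of Theorem \ref{thm:open_cover}) allows me to straighten $H$ locally near $g_0$ so that in $\mathcal D^r$ coordinates it corresponds to $F^{\dim H} \times \{0\}$. Take $S_{g_0}$ to be the image of $\{0\} \times F^d$; it is a $\mathcal D^r$ submanifold slice transverse to the coset $Hg_0$, and the multiplication map $H \times S_{g_0} \to G$ is a local $\mathcal D^r$ diffeomorphism near $(e,g_0)$. After shrinking, $H \cdot S_{g_0}$ is an open $H$-saturated neighbourhood of $Hg_0$, and $\iota$ restricts to a homeomorphism $S_{g_0} \to \iota(H \cdot S_{g_0})$. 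Combining Lemma \ref{lem:large_decomposition} with Proposition \ref{prop:finite_cover} yields finitely many such slices whose $H$-saturations cover $G$; the corresponding finite family of charts $\iota(H \cdot S_i) \to S_i$ on $X$ is a $\mathcal D^r$ atlas, because the transition maps are compositions of slice parametrisations with the $\mathcal D^r$ group operations of $G$.

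\textbf{Normality, imbedding and group structure.} The quotient $X$ is Hausdorff because $E_H$ is closed. For definable normality I use the hypothesis that $G$ is bounded and closed in the ambient space: Proposition \ref{prop:def_compact} gives the definable compactness of $G$, from which $\iota\colon G \to X$ is a definably closed surjection. Given definable $C \subseteq U \subseteq X$ with $C$ closed and $U$ open, the preimages $\iota^{-1}(C) \subseteq \iota^{-1}(U)$ are closed, open and $H$-saturated in $G$, so Corollary \ref{cor:normal} separates them by a definable open set, and pushing forward through the closed map $\iota$ (after $H$-saturation) produces the required separating open set in $X$. Theorem \ref{thm:imbedding} then provides a definable $\mathcal D^r$ imbedding $j\colon X \hookrightarrow F^N$ whose image is a $\mathcal D^r$ submanifold; identifying $X$ with $j(X)$ and $\iota$ with $j$ composed with the quotient projection yields the sought $\mathcal D^r$ quotient (definable identification follows from $\iota$ being a definable closed surjection). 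When $H$ is normal, multiplication and inversion on $G$ are constant on $H$-cosets and hence factor set-theoretically through $\iota$; reading the factored maps in the slice charts of Step~2 exhibits them as local compositions of the $\mathcal D^r$ operations of $G$ with the slice parametrisations, hence $\mathcal D^r$.

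\textbf{Main obstacle.} The decisive step is the definable normality of $X$. The hypothesis that $G$ is bounded and closed is inserted precisely so that Proposition \ref{prop:def_compact} makes $\iota$ definably closed; without this, neither the separation argument above nor the transfer of closed sets from $G$ to $X$ would go through, and Theorem \ref{thm:imbedding} would be unavailable. Once closedness of $\iota$ is in hand, the remaining work is routine bookkeeping with the finitely many slice charts.
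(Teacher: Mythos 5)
Your route is genuinely different from the paper's: you build charts from local transverse slices $S_{g_0}$ and the inverse function theorem, in the style of the classical Lie-group quotient construction, whereas the paper obtains a global set of coset representatives $\tau(G)$ from the definable choice lemma (Proposition \ref{prop:definable_choice}), uses Proposition \ref{prop:cr_pre} and Lemma \ref{lem:large_decomposition} to find a large piece $U \subseteq \tau(G)$ on which $\tau$ is $\mathcal C^r$, and then translates $U$ by finitely many group elements via Proposition \ref{prop:finite_cover} to get the atlas. However, your version has a genuine gap at its central step. The assertion that, ``after shrinking'', $\iota$ restricts to a homeomorphism $S_{g_0} \to \iota(H\cdot S_{g_0})$ requires that each $H$-coset meet the shrunken slice in at most one point. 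This does not follow from $H \times S_{g_0} \to G$ being a local $\mathcal D^r$ diffeomorphism near $(e,g_0)$: a local diffeomorphism need not be injective, and a coset $Hs$ can return to the slice far from $s$. In the classical setting one rules this out with a sequential compactness argument using the closedness of $H$; in the present setting you would need an explicit argument (e.g.\ via Proposition \ref{prop:curve_selection} or Proposition \ref{prop:def_compact}), and without it the charts on $X$ are not well defined. This is precisely the difficulty the paper's global-section construction is designed to avoid, since there $\tau(G)$ meets every coset exactly once by construction.

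Two further points would also need repair. First, your chart construction rests on the inverse function theorem and on Theorem \ref{thm:open_cover}, both of which require $r \geq 1$, while the statement allows $r=0$; the paper's argument avoids the inverse function theorem entirely. Second, Proposition \ref{prop:finite_cover} covers $G$ by finitely many translates of a set that is \emph{large} in $G$, but the saturation $H\cdot S_{g_0}$ of a single slice is merely open and need not be large, so the finite-cover step does not follow as stated; the paper applies Proposition \ref{prop:finite_cover} to the large set $V=\tau^{-1}(U)$, whose largeness is guaranteed by Proposition \ref{prop:pre}(6). Your treatment of definable normality and of the definably identifying property (via closedness and boundedness of $G$ and \cite[Proposition 1.10]{M}) matches the paper's and is fine modulo the imprecision about pushing forward the saturated open set.
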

\begin{proof}
	Let $\mathcal F=(F,<,+,\cdot,0,1,\ldots)$ be a definably complete locally o-minimal expansion of an ordered field.
	We assume that $G$ is a definable subset of $F^n$.
	Set $d_G=\dim G$, $d_H=\dim H$ and $d=d_G-d_H$.
	Consider the definable equivalence relation $$E=\{(g,gh) \in G \times G\;|\; g \in G, h \in H\}.$$
	Let $p_i:G \times G \to G$ be the canonical projections onto the $i$-th factor.
	Set $q_i:=p_i|_E$ for each $i=1,2$.
	Note that, for any definable subsets $A$ and $B$ of $G$, the map $q_1|_{E \cap (A \times B)}$ is a definable open map.
	In fact, let $U$ be a definable open subset of $E \cap (A \times B)$.
	Take an arbitrary point $x \in q_1(U)$.
	We can choose $y \in A$ such that $(x,y) \in E \cap (A \times B)$.
	Since $U$ is open, there exists a definable open neighborhood $x$ of $V$ in $A$ such that $V \times \{y\} \subseteq U$.
	We have $V \subseteq q_1(U)$.
	It means that $q_1(U)$ is open and $q_1|_{E \cap (A \times B)}$ is a definable open map.
	
	By Proposition \ref{prop:definable_choice}, there exists a definable map $\tau:G \to G$ such that the graph of $\tau$ is contained in $E$ and, for $g_1,g_2 \in G$, the equality $\tau(g_1)=\tau(g_2)$ holds true if and only if $g_2g_1^{-1} \in H$.
	We want to construct a definable open subset $U$ of $\tau(G)$ such that 
	\begin{itemize}
		\item $U$ is large in $\tau(G)$ and 
		\item $\tau$ is of class $\mathcal C^r$ on $V:=\tau^{-1}(U)$.
	\end{itemize}
	Firstly, we have $\dim \tau^{-1}(y)=d_H$ for any $y \in \tau(G)$ by Proposition \ref{prop:pre}(5).
	We have $\dim \tau(G)=d$ by Proposition \ref{prop:pre}(6).
	Let $U'$ be the set of points at which $\tau$ is of class $\mathcal C^r$.
	It is large in $G$ by Proposition \ref{prop:cr_pre}.
	If $\tau$ is of class $\mathcal C^r$ at $x$, it is also of class $\mathcal C^r$ at $xh$ for each $h \in H$.
	In fact, take a definable open neighborhood $N$ of $x$ in $G$ such that  $\tau$ is of class $\mathcal C^r$ on $N$.
	Let $m_h:G \to G$ be the $\mathcal D^r$ diffeomorphism defined by $m_h(g)=gh$.
	The definable set $m_h(N)$ is a neighborhood of $xh$ and $\tau = \tau \circ m_h$ on $m_h(N)$.
	It implies that $\tau$ is also of class $\mathcal C^r$ at $xh$.
	
	We get $U'=\tau^{-1}(\tau(U'))$.
	This equality as well as Proposition \ref{prop:pre}(6) implies that the image $\tau(U')$ is large in $\tau(G)$.
	The interior $\myint_{\tau(G)}(\tau(U'))$ of $\tau(U')$ in $\tau(G)$ is also large in $\tau(G)$ by Proposition \ref{prop:pre}(3).
	There exists a $\mathcal D^r$ submanifold $U$ of $F^n$ of dimension $d$ which is contained and large in $\myint_{\tau(G)}(\tau(U'))$ by Lemma \ref{lem:large_decomposition}.
	The definable subset $U$ is open in $\tau(G)$ by the definition.
	Since $U$ is large and open in $\tau(G)$, the inverse image $V:=\tau^{-1}(U)$ is large and open in $G$ by Proposition \ref{prop:pre}(6).
	The definable map $\tau$ is of class $\mathcal C^r$ on $V$.
	We have constructed the desired definable sets $U$ and $V$.
	
	We temporally fix  two points $u,v \in G$.
	Set $$U_{u,v}:=q_1(E \cap (uU \times vU)).$$
	It is open in $uU$ because $q_1|_{E \cap(uU \times vU)}$ is an open map.
	We define $U_{v,u}:=q_1(E \cap (vU \times uU))$ similarly.
	It is obvious that $U_{v,u}=q_2(E \cap (uU \times vU))$.
	We next define the definable map $$\varphi_{u,v}:U_{u,v} \to U_{v,u}.$$
	The definable set $vU$ intersects with the orbit $gH$ at only one point $g'$  for any $g \in G$ by the definition of $U$.
	It is obvious $g' \in U_{v,u}$ when $g \in U_{u,v}$.
	We define $\varphi_{u,v}(g)=g'$. 
	We want to show that $\varphi_{u,v}$ is of class $\mathcal C^r$.
	We obviously have $U_{u,v}=vU_{v^{-1}u,e}$, $U_{v,u}=vU_{e,v^{-1}u}$ and $\varphi_{u,v}(g)=v\varphi_{v^{-1}u,e}(v^{-1}g)$ for any $g \in U_{u,v}$.
	Therefore, we have only to demonstrate that $\varphi_{u,v}$ is of class $\mathcal C^r$ when $v=e$.
	On the other hand, the definable map $\varphi_{u,e}$ coincides with the restriction of $\tau$ on $U_{u,e}$, which is of class $\mathcal C^r$.
	It is obvious that $\varphi_{v,u}$ is the inverse of $\varphi_{u,v}$.
	We have demonstrated that $\varphi_{u,v}:U_{u,v} \to U_{v,u}$ are $\mathcal D^r$ diffeomorphism.
	
	There exist $g_1,\ldots, g_{d_G+1} \in G$ such that $G=\bigcup_{i=1}^{d_G+1} g_iV$ by Proposition \ref{prop:finite_cover}.
	Set $U_i=g_iU$ and $V_i=g_iV$ for each $1 \leq i \leq d_G+1$.
	Put $U_{ij}=U_{g_i,g_j}$ and $\varphi_{ij}=\varphi_{g_i,g_j}$.
	The family of $\mathcal D^r$ submanifolds and transition maps $\{(U_i)_{1 \leq i \leq d_G+1}, (\varphi_{ij})_{1 \leq i,j \leq d_G+1}\}$ defines a $\mathcal D^r$ manifold $Y$.
	We define the $\mathcal D^r$ map $\kappa_i:V_i \to U_i$ by $\kappa_i(x)=g_i\tau(g_i^{-1}x)$.
	It is obvious that, for $1 \leq i,j \leq d_G+1$ and $x \in V_i \cap V_j$, the equality $\kappa_j(x)=\varphi_{ij}(\kappa_i(x))$ holds true.
	Therefore, the family $(\kappa_i)_{1 \leq i \leq d_G+1}$ defines a $\mathcal D^r$ map $\kappa:G \to Y$.
	It is obvious that $\kappa(g)=\kappa(h)$ if and only of $g^{-1}h \in H$.
	
	We want to prove that $Y$ is definably normal.
	Take a definable closed subset $C$ and a definable open subset $O$ of $Y$ with $C \subseteq O$.
	Since $G$ is definably normal, there exists a definable open subset $W$ of $G$ such that $\kappa^{-1}(C) \subseteq W \subseteq \mycl_G(W) \subseteq \kappa^{-1}(O)$.
	The set $\bigcup_{h \in H}h \mycl_G(W)$ is closed in $G$ by \cite[Proposition 1.10]{M} because it is the image of the bounded closed definable set $H \times \mycl(W)$ under the multiplication in $G$.
	Therefore, we may assume that $W$ is $H$-invariant by replacing $W$ with $\bigcup_{h \in H}hW$ because $\kappa^{-1}(C) \subseteq \bigcup_{h \in H}hW\subseteq \mycl_G(\bigcup_{h \in H}hW) \subseteq \bigcup_{h \in H}h\mycl_G(W) \subseteq\kappa^{-1}(O)$.
	It is easy to check that the image of a definable $H$-invariant open subset of $G$ under $\kappa$ is open by  the definition of $\kappa$.
	Therefore, $\kappa(W)$ is open in $Y$.
	The image $\kappa(\mycl_G(W))$ is closed in $Y$ because $\kappa(\mycl_G(W))$ is $H$-invariant and $\kappa$ is surjective.
	
	A definably normal $\mathcal D^r$ manifold is imbeddable as a $\mathcal D^r$ submanifold by Theorem \ref{thm:imbedding}.
	Let $\omega:Y \to X$ be the imbedding.
	Set $\iota=\omega \circ \kappa$.
	Note that the restriction $\iota|_{U_i}$ is a $\mathcal D^r$ diffeomorphism by the definition of $\iota$.
	We show that $\iota$ satisfies the requirements of the theorem.
	The non-trivial part is that $\iota$ is definably identifying.
	Let $K$ be a definable subset of $X$ such that $\iota^{-1}(K)$ is closed in $G$.
	Since $G$ is bounded and closed in the ambient space $F^n$, its subset  $\iota^{-1}(K)$ is closed and bounded in $F^n$.
	The image $K=\iota(\iota^{-1}(K))$ is closed and bounded in the ambient space of $X$ by \cite[Proposition 1.10]{M}.
	It implies that $K$ is closed in $X$.
	
	The final task is to demonstrate the `in addition' part.
	We assume that $H$ is a normal subgroup.
	We only demonstrate that the multiplication $\mymult_{G/H}$ satisfies the conditions given in the theorem.
	The proof for the inverse $\myinv_{G/H}$ is similar.
	We omit it.
	Let $\mymult_G:G \times G \to G$ be the multiplication in $G$.
	For any $1 \leq i,j,k \leq d_G+1$, set $W_{ijk}:=(\iota \times \iota)((U_i \times U_j) \cap (\mymult_G)^{-1}(V_k)) \subseteq X \times X$.
	The family $\{W_{ijk}\}_{1 \leq i,j,k \leq d_G+1}$ is a definable open cover of $X \times X$.
	We define $\mymult_{G/H}: X \times X \to X$ as follows:
	Let $(g,h) \in X \times X$.
	There exists $1 \leq i,j,k \leq d_{G}+1$ such that $(g,h) \in W_{ijk}$.
	Set $$\mymult_{G/H}(g,h) = \iota(\mymult_G((\omega|_{U_i})^{-1}(g), (\omega|_{U_j})^{-1}(h))).$$
	The right hand of the above equation is independent of the choice of $1 \leq i,j,k \leq d_{G}+1$ with $(g,h) \in W_{ijk}$ because $H$ is a normal subgroup.
	Here, we identify the definable subset $U_i$ in $Y$ with the definable subset $g_iU$ in $G$.
	It is a $\mathcal D^r$ map and satisfies the equality $\mymult_{G/H}(\iota(g_1),\iota(g_2)) =\iota(g_1\cdot g_2)$ for $g_1,g_2 \in G$.
\end{proof}

\end{document}